\newcommand{\disk}{\ensuremath{\mathbb{D}} } 
\newcommand{\sphere}{\bar{\Bbb{C}}} 
\newcommand{\riem}{\Sigma}  
\renewcommand{\Bbb}[1]{\ensuremath{\mathbb{#1}}}
\theoremstyle{plain}
        \newtheorem{theorem}{Theorem}[section]
        \newtheorem{lemma}[theorem]{Lemma}
        \newtheorem{proposition}[theorem]{Proposition}
        \newtheorem{corollary}[theorem]{Corollary}
\theoremstyle{definition}
        \newtheorem{definition}[theorem]{Definition}
\theoremstyle{remark}
    \newtheorem{remark}[theorem]{Remark}
\numberwithin{equation}{section} 
\numberwithin{figure}{section} 
\author{Eric Schippers}
\author{Wolfgang Staubach}
\begin{document}

\title{Transmission of harmonic functions through quasicircles on compact Riemann surfaces}

\subjclass[2010]{58J05, 30C62, 30F15.} 

\begin{abstract}
 Let $R$ be a compact surface and let $\Gamma$ be a Jordan curve which separates $R$ into two connected components $\riem_1$ and $\riem_2$.  A harmonic function $h_1$ on $\riem_1$ of bounded Dirichlet norm has boundary values $H$ in a certain conformally invariant non-tangential sense on $\Gamma$.  
 
 We show that if $\Gamma$ is a quasicircle, then there is a unique harmonic function $h_2$ of bounded Dirichlet norm on $\riem_2$ whose boundary values agree with those of $h_1$.  Furthermore, the resulting map from the Dirichlet space of $\riem_1$ into $\riem_2$ is bounded with respect to the Dirichlet semi-norm.   
\end{abstract}

\maketitle

\begin{section}{Introduction}

Let $R$ be a compact Riemann surface and let $\Gamma$ be a quasicircle separating $R$ into two complementary components $\riem_1$ and $\riem_2$. Given a function $h$ on $\Gamma$, do there exist elements of the Dirichlet space of $\riem_1$ and the Dirichlet space of $\riem_2$ that have $h$ as the boundary value? In this paper we show that $h$ is the boundary value of an element of the Dirichlet space of $\riem_1$ if and only if it is the boundary value of an element of the Dirichlet space of $\riem_2$. This leads us naturally to a concept that we refer to as the \textit{transmission} of a harmonic function from $\riem_1$ to $\riem_2$ through the quasicircle $\Gamma$.  The transmission of a Dirichlet bounded harmonic function on $\riem_1$ is obtained by first taking the boundary values on $\Gamma$, and then finding the corresponding Dirichlet bounded harmonic function on $\riem_2$ with these same boundary values. Our main result is that the resulting transmission map between the Dirichlet spaces of $\riem_1$ and $\riem_2$ is bounded (Theorem \ref{Thm:main boundedness of transmission}). \\

In the case when $\Gamma$ is a Jordan curve separating the Riemann sphere into two components, the authors showed that the transmission exists and is bounded if and only if $\Gamma$ is a quasicircle \cite{SchippersStaubach_jump}. Our proof here of the general case uses sewing techniques for Riemann surfaces.  Along the way, several other results are established which are of independent interest. For example, we show that a function on a Jordan curve is the boundary values of an element of the Dirichlet space of one of the complementary components $\riem$ if and only if it has an extension to a doubly connected neighbourhood, one of whose boundaries is the Jordan curve. Here, boundary values are obtained from a conformally invariant notion of non-tangential limit, and we show that boundary values exist
except on a set of capacity zero.  \\  

We define capacity zero sets along the boundary in terms of charts on doubly-connected neighbourhoods induced by Green's function.  It is possible to do this in greater generality \cite{RodinSario}, \cite{SarioNakai}, \cite{SarioOikawa}, for example in terms of the ideal boundary. However our approach is sufficiently general for the purposes of this paper, and leads as directly as possible to the transmission theorem.  

Similarly, we also required an extension of A. Beurling's theorem (concerning boundary values of harmonic functions with bounded Dirichlet energy) to Riemann surfaces with boundary.  The 
generalization of Beurling's theorem to Riemann surfaces is due to Z. Kuramochi, phrased in 
terms of what he calls N-fine limits \cite{Kuramochi}.  We give a proof using the charts induced by Green's 
function, in terms of a conformally invariant notion of non-tangential limit, in the case that the boundary is a Jordan curve in a compact surface.  This can conveniently be compared to Sobolev traces in the case that the boundary is sufficiently 
regular.  Again, we found this to be the most direct line to the proof of the transmission theorem on 
quasicircles.   We do not claim originality for the existence of one-sided boundary values, except 
perhaps for developing an approach which facilitates the application of sewing techniques, as it does in the present paper.  Indeed, the 
reader will recognize the ghost of the conformal welding theorem throughout the paper. \\

A central issue is that it must be shown that a set of capacity zero with respect to one side of the curve $\Gamma$ must have capacity zero with respect to the other.  We show that this holds (once again) for quasicircles.  Note that this is not true for harmonic measure: a set of harmonic measure zero with respect to one side of a quasicircle need not have harmonic measure zero with respect to the other (indeed, this fact prevents us from using several sources).  In particular, transmission is not even a well-posed problem for non Dirichlet-bounded harmonic functions, even though boundary values might exist up to a set of harmonic measure zero.  Further discussion can be found in \cite{RSS_Dirichlet_general}.\\

In the case of the sphere, the existence of the transmission is intimately connected to the Riemann boundary value problem (i.e the jump problem) on quasicircles, Faber-type approximations on the Dirichlet space and the Grunsky inequalities, and generalized period matrices  \cite{RSS_Dirichlet_general}, \cite{SchippersStaubach_jump}, \cite{SchippersStaubach_Grunsky_quasicircle}. The results of the present paper make the extension of 
these concepts to general Riemann surfaces possible, as will be shown in upcoming publications. \\

As is common practice, we will denote constants which can be determined by known parameters in a given situation, but whose value is not crucial to the problem at hand, by $C$.  The value of $C$ may differ from line to line, but in each instance could be estimated if necessary.

\end{section}
\begin{section}{Preliminaries}
\begin{subsection}{Green's functions and decompositions of harmonic functions}
 In this section we collect some well-known theorems and establish notation.  
 
If $R$ is a Riemann surface and $\riem \subset R$ is compactly contained in $R$, then we say 
that $g(z,w)$ is the Green's function for $\riem$ if $g(\cdot,w)$ is harmonic on $R \backslash \{w\}$,  $g(z,w) + \log{|\phi(z)-\phi(w)|}$ is harmonic in $z$
on a local parameter $\phi:U \rightarrow \mathbb{C}$ in an open neighbourhood $U$ of $w$, and $\lim_{z \rightarrow z_0} g(z,w) =0$ for all $z_0 \in \partial \riem$ and $w \in \riem$.  Green's function is unique and symmetric, provided it exists.  In this paper, we will consider only the 
case where $R$ is compact and no boundary component of $\riem$ reduces to a point, so Green's function of $\riem$ exists, see L. Ahlfors and L. Sario \cite[II.3]{Ahlfors_Sario}.  
 
Let $\ast$ 
denote the dual of the almost complex structure, that is
\[ \ast (a \, dx + b \,dy) = -b \,dx + a \,dy \]
for a one form $a \,dx + b \,dy$ written in local coordinates $z=x+iy$.  The one-form $\ast dg$ has a multi-valued primitive $\ast g$ which is locally a harmonic conjugate of $g$.   By Stokes' 
theorem, if $\riem$ is an open set, compactly contained in $R$ bounded by a single positively oriented 
simple closed curve $\Gamma$, then 
\begin{equation} \label{eq:Green_coperiod}
 \int_\Gamma \ast dg = - 2 \pi.  
\end{equation}
 
\begin{theorem}[H. L. Royden \cite{Royden}] \label{th:Rodin_harmonic_decomp}  Let $R$ be a compact Riemann surface.  Let $E\subseteq R$ be a closed
subset, and $O$ be an open set containing $E$.   Given a harmonic function $U$ in $O \backslash E$
there is a harmonic function $u$ in $R \backslash E$ such that $U-u$ extends harmonically to $O$ 
if and only if 
\[ \int_\Gamma \ast dU =0 \]
for some cycle $\Gamma$ homologous in $O \backslash E$ to the boundary of $O$. The function $u$ is 
unique up to a constant.  
\end{theorem}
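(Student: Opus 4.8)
The plan is to handle necessity by a direct period computation, and sufficiency by the classical Mittag--Leffler type argument: a partition of unity reduces the existence problem to solving a single Poisson equation on the compact surface $R$. Throughout, fix a relatively compact open set $O'$ with smooth boundary satisfying $E \subset O' \Subset O$. Since the region between $\partial O'$ and $\partial O$ lies in $O \setminus E$, the cycle $\partial O'$ is homologous to $\partial O$, hence to $\Gamma$, in $O \setminus E$, and as $\ast dU$ is closed on $O \setminus E$ we have $\int_{\partial O'} \ast dU = \int_\Gamma \ast dU$. Also fix once and for all a Riemannian metric on $R$ in the conformal class, so that $\ast$ is the conformal star operator and $\Delta$-harmonicity coincides with harmonicity.

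\emph{Necessity.} Suppose $U = u + v$ on $O \setminus E$, with $u$ harmonic on $R \setminus E$ and $v$ extending harmonically to $O$. Since $u$ is harmonic on the open set $R \setminus E$, which contains the compact manifold-with-boundary $R \setminus O'$ (whose boundary is $\partial O'$, because $E \subset O'$), Stokes' theorem applied to the closed $1$-form $\ast du$ gives $\int_{\partial O'} \ast du = \int_{R \setminus O'} d \ast du = 0$. Likewise $v$ harmonic on $O \supset \overline{O'}$ gives $\int_{\partial O'} \ast dv = \int_{O'} d \ast dv = 0$. Adding, $\int_\Gamma \ast dU = \int_{\partial O'} \ast dU = 0$.

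\emph{Sufficiency.} Choose $\chi \in C^\infty(R)$ with $\chi \equiv 1$ on a neighbourhood of $\overline{O'}$ and $\operatorname{supp} \chi \subset O$, and on $O \setminus E$ write $U = u_0 + v_0$ with $u_0 := \chi U$ and $v_0 := (1-\chi)U$. Then $u_0$ extends to a smooth function on $R \setminus E$ (by $0$ outside $O$) and $v_0$ extends to a smooth function on $O$ (by $0$ on $O'$); the $2$-form $\theta := d \ast d u_0$ vanishes on $O'$ and outside $O$, hence defines a smooth $2$-form on all of $R$, supported in $O \setminus \overline{O'}$, and one checks $d \ast d v_0 = -\theta$ on $O$. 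The crucial point is
\[
\int_R \theta = \int_{R \setminus O'} d \ast d(\chi U) = -\int_{\partial O'} \ast d(\chi U) = -\int_{\partial O'} \ast dU = -\int_\Gamma \ast dU = 0,
\]
where the first equality uses $\theta \equiv 0$ on $O'$, the second is Stokes' theorem on $R \setminus O' \subset R \setminus E$ (where $u_0 = \chi U$ is smooth), the third uses $\chi \equiv 1$ near $\partial O'$, and the last uses the homology of $\partial O'$ in $O \setminus E$ together with the hypothesis. Consequently the equation $d \ast d w = \theta$ for $w \in C^\infty(R)$ is a scalar Poisson equation on the compact surface $R$, which is solvable precisely because $\int_R \theta = 0$. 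For such a $w$, set $u := u_0 - w$ on $R \setminus E$ and $v := v_0 + w$ on $O$. Then $d \ast d u = \theta - \theta = 0$ and $d \ast d v = -\theta + \theta = 0$, so $u$ is harmonic on $R \setminus E$ and $v$ is harmonic on $O$, while $u + v = u_0 + v_0 = U$ on $O \setminus E$; that is, $U - u$ extends harmonically (to $v$) on $O$.

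\emph{Uniqueness.} If $u$ and $u'$ both satisfy the conclusion, then $u - u'$ is harmonic on $R \setminus E$ and, on $O \setminus E$, equals $(U - u') - (U - u)$, which extends harmonically to $O$; since $(R \setminus E) \cup O = R$, these patch to a harmonic function on the compact surface $R$, necessarily a constant. The one delicate step is the vanishing of $\int_R \theta$: this is the sole place the homological hypothesis is used, and it must be matched against the solvability criterion for the Poisson equation on a compact surface (equivalently, that the cokernel of $f \mapsto d \ast d f$ is spanned by the area form). Everything else is bookkeeping with cutoff functions and orientations.
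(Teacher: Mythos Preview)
The paper does not give its own proof of this statement: it is quoted as a theorem of Royden and cited to \cite{Royden} without argument. So there is nothing in the paper to compare your proof against directly.

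That said, your proof is correct and is essentially the classical Mittag--Leffler/cutoff argument one would expect for a result of this type. The necessity direction is handled cleanly by two applications of Stokes' theorem, one on each side of $\partial O'$. For sufficiency, the reduction via the cutoff $\chi$ to a global Poisson equation $d\ast dw=\theta$ on the compact surface is the right idea, and you correctly identify the solvability obstruction $\int_R \theta=0$ as exactly the place where the homological hypothesis enters. The computation of $\int_R\theta$ via Stokes on $R\setminus O'$ is fine; the only small thing worth making explicit is that $\theta$, a priori defined on $R\setminus E$, extends smoothly by zero across $E$ precisely because $\chi\equiv 1$ near $\overline{O'}\supset E$ forces $u_0=U$ to be harmonic there---you say this, but it is the linchpin. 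The uniqueness argument is standard and correct.

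One cosmetic point: in the necessity direction you implicitly assume $O'$ can be chosen with smooth boundary and $E\subset O'\Subset O$; this is of course possible since $E$ is compact in the open set $O$, but a half-sentence acknowledging this would not hurt. Similarly, the phrase ``homologous to $\partial O$'' in the theorem statement is somewhat informal when $O$ need not have smooth boundary; your replacement by $\partial O'$ is exactly how one makes sense of it, and you handle this correctly.
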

We will require a special case of this theorem throughout this paper, which we state in the 
next section.
\end{subsection}
\begin{subsection}{Collar neighbourhoods}

 We will use the following terminology.  By a Jordan curve on a Riemann surface $R$, we mean a homeomorphic image of $\mathbb{S}^1$ in $R$.  

We use the following terminology for charts near a curve  $\Gamma$.    
 \begin{definition}  Let $\Gamma$ be a Jordan curve in a compact Riemann surface $R$.  
 \begin{enumerate}
 \item A collar neighbourhood of $\Gamma$ is an open connected set $A$ bordered by $\Gamma$ and $\Gamma'$, where $\Gamma'$ is a Jordan curve which is homotopic to $\Gamma$ and such that $\Gamma \cap \Gamma'$ is empty.  A collar chart for $\Gamma$ is a collar neighbourhood $A$ together with a biholomorphism $\phi:A \rightarrow \mathbb{A}$ where $\mathbb{A}$ is an annulus in $\mathbb{C}$.
 \item  A doubly-connected neighbourhood of $\Gamma$ is an open connected set $A$ containing $\Gamma$, bounded by two non-intersecting Jordan curves, both homotopic to $\Gamma$.  A doubly-connected
   chart for $\Gamma$ is a biholomorphism $\phi:A \rightarrow \mathbb{A}$ where $\mathbb{A}$ is an annulus in the plane $\mathbb{C}$ and $A$ is a doubly-connected neighbourhood of $\Gamma$.
 \end{enumerate}
\end{definition}
The condition that the image of $\phi$ is an annulus is not restrictive, since if the image were any doubly-connected domain, then one could compose with a conformal map onto an annulus \cite{Neharibook}. 
\begin{definition}
 We say that a Jordan curve $\Gamma$ in a Riemann surface is strip-cutting if it has a doubly-connected chart $\phi:A \rightarrow \mathbb{C}$.  
\end{definition}
A Jordan curve in $\mathbb{C}$ or $\sphere$ in the usual sense is strip-cutting.  Also, an analytic curve is by definition strip-cutting.   By shrinking the domain of the doubly-connected chart, one can always arrange that $\Gamma_1$ and $\Gamma_2$ are analytic curves in $R$ and $\phi$ is conformal on the closure of $A$.  We say that a Jordan curve separates $R$ if $R \backslash \Gamma$ has two connected components.  A strip-cutting Jordan curve need not be separating.  

\begin{remark}  If $\Gamma$ is a strip-cutting Jordan curve, then letting $B$ be one of the connected components of $A \backslash \Gamma$, it is easily seen that $\Gamma$ is a doubly-free boundary arc of $B$ in the Riemann surface $R$, see P. Gauthier and F. Sharifi \cite{GauthierSharifi}.   If we choose the boundaries of $A$ to also be strip-cutting, then $A$ and both components of $A \backslash \Gamma$ are then Jordan regions in $R$ in their sense.  

Furthermore, these domains are themselves bordered Riemann surfaces \cite[Theorem 5.1]{GauthierSharifi}.  Similarly, if a strip-cutting Jordan curves separates a compact surface $R$, then the components of the complement are bordered surfaces.  
\end{remark}

We will also need the following fact, which follows directly from standard results.
\begin{lemma}  \label{le:continuous_extension_collar}
 Let $\Gamma$ be a strip-cutting Jordan curve in a compact Riemann surface $R$.  Every collar chart $\phi:A \rightarrow \mathbb{A}$ extends continuously to a bijection from $A \cup \Gamma$ to 
 $\mathbb{A} \cup \gamma$ where $\gamma$ is one of the boundary circles of $\mathbb{A}$.  The restriction
 of the chart to $\Gamma$ is a homeomorphism onto the circle.
\end{lemma}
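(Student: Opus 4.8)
The plan is to transport a thin one-sided neighbourhood of $\Gamma$ into the plane using the doubly-connected chart supplied by the strip-cutting hypothesis, to recognise $\phi$ there as a conformal map between two doubly-connected planar domains bounded by Jordan curves, and then to invoke the version of Carath\'eodory's theorem valid for such domains. Everything else is elementary topology of annuli.

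First I would fix notation. Since $\Gamma$ is strip-cutting, choose a doubly-connected chart $\psi : N \to \mathbb{A}_0$, where $N$ is a doubly-connected neighbourhood of $\Gamma$ and $\mathbb{A}_0$ is an annulus in $\mathbb{C}$, and put $\Gamma_0 := \psi(\Gamma)$; this is a Jordan curve separating the two boundary components of $\mathbb{A}_0$, since $\Gamma$ generates $\pi_1(N)$. Write $\mathbb{A} = \{\, a < |z| < b \,\}$. Since $\partial A = \Gamma \cup \Gamma'$ with $\Gamma, \Gamma'$ disjoint, each end of the open annulus $A$ accumulates in $R$ on exactly one of these curves, so there is a well-defined end of $A$ ``at $\Gamma$''; as $\phi$ is a homeomorphism of $A$ onto the open annulus $\mathbb{A}$ it induces a bijection between their two-element sets of ends, and after relabelling we may assume the end of $A$ at $\Gamma$ corresponds to the end of $\mathbb{A}$ at $\gamma := \{\, |z| = b \,\}$. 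Concretely: for each $\epsilon > 0$, any sequence in $A$ converging to a point of $\Gamma$ is eventually carried by $\phi$ into $\{\, b - \epsilon < |z| < b \,\}$, while any sequence converging to a point of $\Gamma'$ is eventually carried into $\{\, a < |z| < a + \epsilon \,\}$.

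Now set $B_\epsilon := \phi^{-1}(\{\, b - \epsilon < |z| < b \,\})$, a collar neighbourhood of $\Gamma$ inside $A$. A short argument using the end correspondence shows that $B_\epsilon$ lies inside any prescribed neighbourhood of $\Gamma$ once $\epsilon$ is small; hence, for such $\epsilon$, the closure of $B_\epsilon$ in $R$ is $\overline{B_\epsilon} = B_\epsilon \cup \Gamma \cup \phi^{-1}(\{\, |z| = b - \epsilon \,\})$, a compact subset of $N$, and every sequence in $A$ converging to a point of $\Gamma$ lies eventually in $B_\epsilon$. Fix such an $\epsilon$. Then $\psi(B_\epsilon)$ is a doubly-connected planar domain bounded by the Jordan curves $\Gamma_0$ and $\beta := \psi(\phi^{-1}(\{\, |z| = b - \epsilon \,\}))$, and the conformal bijection $g := \psi \circ \phi^{-1} : \{\, b - \epsilon < |z| < b \,\} \to \psi(B_\epsilon)$ carries a neighbourhood of $\{\, |z| = b - \epsilon \,\}$ onto a neighbourhood of $\beta$, hence matches the end at $\{\, |z| = b \,\}$ with the end at $\Gamma_0$. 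By the form of Carath\'eodory's theorem for doubly-connected domains bounded by Jordan curves --- which follows from the simply-connected case by applying prime-end theory to each boundary component --- $g$ extends to a homeomorphism $\bar g$ of $\{\, b - \epsilon \le |z| \le b \,\}$ onto $\overline{\psi(B_\epsilon)}$ carrying $\{\, |z| = b \,\}$ homeomorphically onto $\Gamma_0$.

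Finally I would assemble the extension. The map $\psi^{-1} \circ \bar g$ is a homeomorphism of $\{\, b - \epsilon \le |z| \le b \,\}$ onto $\overline{B_\epsilon}$ which extends $\phi^{-1}$ and carries $\{\, |z| = b \,\}$ homeomorphically onto $\Gamma$; hence its inverse is a homeomorphism of $\overline{B_\epsilon}$ onto $\{\, b - \epsilon \le |z| \le b \,\}$ extending $\phi|_{B_\epsilon}$ and restricting to a homeomorphism $\Gamma \to \gamma$. Define $\Phi : A \cup \Gamma \to \mathbb{A} \cup \gamma$ by $\Phi|_A = \phi$ and $\Phi|_\Gamma = (\psi^{-1} \circ \bar g)^{-1}|_\Gamma$. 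Since $\phi : A \to \mathbb{A}$ and $\Phi|_\Gamma : \Gamma \to \gamma$ are bijections onto disjoint sets, $\Phi$ is a bijection; it restricts to a homeomorphism $\Gamma \to \gamma$; and it is continuous, since it equals $\phi$ on the open set $A$ and, at any $p \in \Gamma$, every sequence in $A$ converging to $p$ eventually lies in $B_\epsilon$, where $\Phi$ coincides with the continuous map $(\psi^{-1} \circ \bar g)^{-1}$. The only external ingredient is the doubly-connected version of Carath\'eodory's theorem; all the rest is bookkeeping, and I expect the one slightly delicate point in a complete write-up to be the topological step: identifying the ends of $A$ with the boundary circles of $\mathbb{A}$, and checking that thin $\phi$-collars of $\gamma$ pull back to thin neighbourhoods of $\Gamma$ in $R$.
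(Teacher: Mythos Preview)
Your proof is correct and follows essentially the same approach as the paper: use the doubly-connected chart from the strip-cutting hypothesis to transport a one-sided neighbourhood of $\Gamma$ into the plane, then apply the doubly-connected Carath\'eodory theorem (the paper cites Conway, \emph{Functions of One Complex Variable II}, Theorem~3.4, Sect.~15.3) to the resulting planar conformal map, and compose back. The paper composes in the order $\phi\circ\phi_0^{-1}$ where $\phi_0$ is the restriction of the doubly-connected chart to one side of $\Gamma$, while you compose as $\psi\circ\phi^{-1}$; these are the same idea. Your treatment of the end correspondence and the shrinking $B_\epsilon\subset N$ is more explicit than the paper's, which dispatches this with the phrase ``possibly shrinking the domain'', but the substance is identical.
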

\begin{proof}
  Let $A_1$ and $A_2$ be domains in $\mathbb{C}$ bordered by non-intersecting homotopic Jordan curves.  Given a conformal map $f:A_1 \rightarrow A_2$, $f$ has a continuous, one-to-one extension from the closure of $A_1$ to the closure $A_2$, whose restriction to either boundary curve is a homeomorphism onto one of the boundary curves of $A_2$ \cite[Theorem 3.4, Sect 15.3]{ConwayII}.
 
 By the definition of strip-cutting Jordan curve, there is a map $\phi_0:A_0 \rightarrow A_1$ of 
 a collar neighbourhood $A_0$ to a doubly-connected domain $A_1$ bounded by two-non-intersecting homotopic Jordan curves, which has a homeomorphic extension taking $\Gamma$ to one of the curves.  Namely, one can take $\phi_0$ to be the restriction of a doubly-connected chart to one side of $\Gamma$.  Given 
 an arbitrary collar chart $\phi:A \rightarrow \mathbb{A}$, one may apply the previous paragraph 
 to $\phi \circ \phi_0^{-1}$ (possibly shrinking the domain), and thus 
 $\phi = (\phi \circ \phi_0^{-1}) \circ \phi_0$ has the desired homeomorphic extension.

 \end{proof}
 
 \begin{remark}  More general extension theorems for Jordan regions on Riemann surfaces can be found in \cite{GauthierSharifi}.   
 \end{remark}
 
A particular collar chart can be constructed using Green's functions in a standard way.  Let $\Gamma$ be a strip-cutting Jordan curve in a compact Riemann surface $R$, and assume that $\Gamma$ separates  $R$ into two components $\riem_1$ and $\riem_2$.  Choose one of the components $\riem$ and let $g$ denote its Green's function.  Let $A \subset \riem$ be some collar neighbourhood of $\Gamma$.

Fix a $p \in \riem$ and let $g_p(z)=g(z,p)$.  The one-form $\ast dg_p$ has a multi-valued primitive $\tilde{g}_p$ which is a local harmonic conjugate of $g_p$.  

Let $\gamma$ be a smooth curve in $A$ which is homotopic to $\Gamma$, and let 
$m = \int_\gamma \ast dg_p$.  Then the function 
\[  \phi = \exp{[-2 \pi  (g_p + i \tilde{g}_p)/m]}  \]
is holomorphic and single-valued on some region $A_r$ bounded by $\Gamma$ and a level curve
$\Gamma_r = \{ z\,:\, g(z,p) = r \}$ of $g_p$ for some $r >0$.  
A standard use of the argument principle show that $\phi$ is one-to-one 
and onto the annulus $\{ z : e^{-2\pi r/m} <|z|<1 \}$.  By Lemma 
\ref{le:continuous_extension_collar} $\phi$ has a continuous extension to 
$\Gamma$ which is a homeomorphism from $\Gamma$ onto its image. By increasing $r$, one can also arrange that $\phi$ extends analytically to $\Gamma_r$.

We call this collar chart the canonical collar chart with respect to $(\riem,p)$.  
It is uniquely determined up to a rotation and the choice of $r$ in the definition of domain.  The level sets $g(\cdot,p)=\epsilon$ are simple closed analytic curves for $\epsilon \in (0,r)$ for some $r>0$, but will not necessarily be simple or connected curves for general $\epsilon$.

There are of course many possible equivalent definitions of quasicircle; we fix one now.  In the plane $\mathbb{C}$, we say that a quasicircle is the image of $\mathbb{S}^1$ under a quasiconformal map of $\mathbb{C}$.  On a Riemann surface, we will use the following definition.
 \begin{definition}  A simple closed curve $\Gamma$ in a Riemann surface is a quasicircle if there 
 is a doubly-connected chart $\phi:A \rightarrow B \subset \mathbb{C}$ for $\Gamma$
 such that $\phi(\Gamma)$ is a quasicircle in $\mathbb{C}$.  
 \end{definition}
 In particular, a quasicircle is a strip-cutting Jordan curve.

\end{subsection}
\begin{subsection}{Dirichlet and Sobolev spaces} \label{se:Dirichlet_Sobolev}
In this section we establish notation, and collect some results on Sobolev spaces on Riemann surfaces. 

We define the Dirichlet spaces as follows.  Let $R$ be a compact Riemann surface, and let 
$\riem \subseteq R$ be open.  A function $f:\riem \rightarrow \mathbb{C}$ is harmonic if $f$ is $C^2$ and $d \ast d f=0$.
Equivalently, in a neighbourhood of every point, there is a local coordinate expression for $f$ so that $f$ is harmonic.  

The space of complex one-forms on $\riem$ has the natural inner-product 
\[ \left<\omega_1,\omega_2 \right>_{\riem} = \frac{1}{2} \iint_\riem \omega_1 \wedge \ast \overline{\omega_2}; \]
Denote by $L^2(\riem)$ the set of one-forms which are $L^2$ with respect to this inner product.
We define the harmonic Dirichlet space by 
\[ \mathcal{D}_\mathrm{harm}(\riem) = \{ f:\riem \rightarrow \mathbb{C} \,:\, 
  d \ast df =0 \ \text{and} \  df \in L^2(\riem) \}. \]

 We can define a degenerate inner product on $\mathcal{D}_{\mathrm{harm}}(\riem)$ by 
\[   (f,g)_{\mathcal{D}_{\text{harm}}(\riem)} = \left<df,dg \right>_{\riem}.   \]
 
If we define the operators 
\[   \partial f = \frac{\partial f}{\partial z} dz,  \ \ \ 
   \overline{\partial} f =  \frac{\partial f}{\partial \bar{z}} d \bar{z} \]
in local coordinates, then the inner product can be written as
\begin{equation} \label{eq:inner_product_with_dbar_and_d}
 (f,g)_{\mathcal{D}_{\text{harm}}(\riem)} = i \iint_\riem \left[ \partial f \wedge \overline{\partial g}  -  \overline{\partial} f \wedge 
 \partial \overline{g} \right].    
\end{equation}

Denote the holomorphic and anti-holomorphic Dirichlet spaces by
\begin{align*}
 \mathcal{D}(\riem) &  =  \{ f \in \mathcal{D}_\mathrm{harm}(\riem) \,:\, \overline{\partial}f =0 \} \\
 \overline{\mathcal{D}(\riem)} & = \{ f \in \mathcal{D}_\mathrm{harm}(\riem) \,:\, {\partial}f =0 \}.
\end{align*}
Of course $\overline{\mathcal{D}(\riem)} = \{ \overline{f}: f \in \mathcal{D}(\riem) \}$ so the notation is consistent.  These two spaces are orthogonal with respect to the inner product given by (\ref{eq:inner_product_with_dbar_and_d}). \\ 

Next we gather some standard results from the theory of Sobolev spaces which we shall use in this paper.
For more specific details of the discussions below, as well as theory of Sobolev spaces and elliptic boundary value problems on manifolds, we refer the reader to \cite{Taylor}, and for the theory of Sobolev spaces in domains of $\mathbb{R}^n$ we refer to \cite{Mazya}.

Let $(M, h)$ be a compact Riemann surface endowed with a hyperbolic metric $h$ and $f$ a function defined on $M$. 
Set $d\sigma(h):= \sqrt{|\det h_{ij}|}\,|dz|^2$ which is the area-element of $R$, where $h_{ij}$
are the components of the metric with respect to coordinates $z = x_1 + i x_2$.  We define the inhomogeneous and homogeneous Sobolev norms and semi-norms respectively of $f$ as

\begin{equation}\label{defn: Sobolev norm on riem}
\Vert f\Vert_{H^1 (R)}:= \Big(\iint_R |f|^2 d\sigma(h)\Big)^{\frac{1}{2}}+ \Big(\iint_R df \wedge \ast \overline{df}\Big)^{\frac{1}{2}},
\end{equation}
and
\begin{equation}\label{defn: hom Sobolev norm on riem}
\Vert f\Vert_{\dot{H}^1 (R)}:=\Big(\iint_R df \wedge \ast \overline{df}\Big)^{\frac{1}{2}},
\end{equation}
thus the Dirichlet semi-norm and the homogeneous Sobolev semi-norm are given by the same expression. 

Observe that since any two metrics on $R$ has comparable determinants, choosing different metrics in the definitions above yield equivalent norms.  When necessary to specify the underlying metric, we will use the notation $H^1(R,h)$.   Now if $R$ is a compact Riemann surface and $\riem$ is an open subset of $R$ with analytic boundary $\partial \riem$ then the pull back of the metric $h_{ij}$ under the inclusion map yields a metric on $\riem$. Using that metric, we can define the inhomogeneous and homogeneous Sobolev spaces $H^1(\Sigma)$ and $\dot{H}^1(\Sigma)$. However these definitions will a-priori depend on the choice of the metric induced by $R$, due to the non-compactness of $\riem$, unless further conditions on $\riem$
are specified.  \\

We will also use the fractional Sobolev $H^{\frac{1}{2}}(M)$ on a compact smooth
n-dimensional manifold $M$,  whose definition we now recall.

First,  we define the Sobolev space $H^{\frac{1}{2}}(\mathbb{R}^n),$  which consists of tempered distributions $u$ such that $\int_{\mathbb{R}^n} (1+|\xi|^2)^{1/2}|\widehat{u}(\xi)|^2 \, d\xi<\infty$, where $\widehat{u}(\xi)$ is the Fourier transform of $u$. Now, let $M$ be an $n-$dimensional smooth compact manifold without boundary. Given a distribution $f\in \mathscr{D}'(M)$ (see e.g. \cite{Taylor} for details) one says that $f\in H^{1/2}(M),$ if for any coordinate patch $(\phi,U)$ of $M$ and any $\psi\in C^{\infty}_c(U)$,  $\psi f|_U \circ \phi^{-1}\in H^{1/2}(\mathbb{R}^n).$ 

To define the Sobolev norms, one starts with the smooth atlas $(\phi_j, U_j)$ and the corresponding smooth partition of unity $\psi_j$ with $\psi_j\geq 0$, $\mathrm{supp}\, \psi_j \subset U_j$ and $\sum_j \psi_j =1$, and then one defines

\begin{equation}\label{defn: Sobolev norm}
\Vert f\Vert_{H^{\frac{1}{2}} (M)}:=\sum_j \Vert (\psi_j f)\circ \phi^{-1}_j \Vert_{H^{\frac{1}{2}}(\mathbb{R}^n)}
\end{equation}
As is  well-known, different choices of the atlas and its corresponding partition of unity, produces norms that are equivalent with \eqref{defn: Sobolev norm}, see e.g. \cite{Booss}, Proposition 11.2 on page 68.

A rather fundamental fact in the study of boundary value problems is the Sobolev trace theorem that asserts that for $f\in H^1 (\riem)$ the restriction of $f$ to the smooth boundary $\partial \Sigma$ is in $H^{1/2}(\partial \Sigma)$, and the trace operator is a bounded linear map, see e.g. \cite[Proposition 4.5 p 334]{Taylor}. 
\begin{remark}
It is important to note that in all cases we consider the $H^{1/2}$ space of $\partial \Sigma$ in this paper, we assume that $\riem \subset R$ for a compact surface $R$ and that $\partial \riem$ is an analytic curve (and in particular smooth) and thus an embedded submanifold of $R$.  Thus the charts on  $\partial \riem$ can be taken to be restrictions of charts from $R$.  For roughly bounded $\riem$, the boundary $\partial \riem$ can be endowed with the manifold structure obtained by treating it as the ideal boundary of $\riem$. However, we will not apply the Sobolev theory directly to such domains.  Indeed in those cases the boundary is of course not a submanifold of $R$.    
\end{remark}
 
 \begin{proposition}\label{equiv and sobolevs in the plane}
Let $\Omega$ be a bounded domain in $\mathbb{R}^n$ with Lipschitz boundary, then $H^1(\Omega)=\dot{H}^1(\Omega)$ as sets. 
\end{proposition}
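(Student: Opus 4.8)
The plan is to handle the two inclusions separately; only one of the two one-sided norm bounds carries any content. Comparing the definitions \eqref{defn: Sobolev norm on riem} and \eqref{defn: hom Sobolev norm on riem} (specialized to $\Omega\subset\mathbb{R}^n$ with the Euclidean metric), the homogeneous norm $\|f\|_{\dot{H}^1(\Omega)}$ is literally one of the two summands making up $\|f\|_{H^1(\Omega)}$, so the inclusion $H^1(\Omega)\subseteq\dot{H}^1(\Omega)$ and the bound $\|f\|_{\dot{H}^1(\Omega)}\le\|f\|_{H^1(\Omega)}$ come for free. The whole content is the reverse: for $f\in\dot{H}^1(\Omega)$ --- that is, $f\in L^1_{\mathrm{loc}}(\Omega)$ with $df\in L^2(\Omega)$ --- one must show $f\in L^2(\Omega)$ together with a bound on $\|f\|_{L^2(\Omega)}$ by $\|df\|_{L^2(\Omega)}$, up to the single degree of freedom coming from the constant functions.

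This reduces to a Poincar\'e inequality. Since $\Omega$ is bounded, the mean value $f_\Omega:=|\Omega|^{-1}\int_\Omega f\,d\sigma$ is defined, and the splitting $f=(f-f_\Omega)+f_\Omega$ separates $f$ into a mean-zero part and a constant; the constant lies in $L^2(\Omega)$ with $\|f_\Omega\|_{L^2(\Omega)}=|\Omega|^{1/2}|f_\Omega|$, so it suffices to estimate $\|f-f_\Omega\|_{L^2(\Omega)}$. For this I would invoke the Poincar\'e inequality for functions of zero mean on bounded connected Lipschitz domains,
\[
 \|f-f_\Omega\|_{L^2(\Omega)}\le C(\Omega,n)\,\|df\|_{L^2(\Omega)},
\]
see \cite{Mazya}; this is the sole place where the Lipschitz hypothesis is used, via the Rellich compactness of the inclusion $H^1(\Omega)\hookrightarrow L^2(\Omega)$ (obtained for instance from a bounded extension operator $H^1(\Omega)\to H^1(\mathbb{R}^n)$), from which the inequality follows by the standard compactness-and-contradiction argument. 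One small point of rigour: that inequality is usually quoted for $f\in H^1(\Omega)$, whereas here $f$ is only known to lie in $\dot{H}^1(\Omega)$ a priori; this gap is bridged by running the estimate first on an exhaustion of $\Omega$ by relatively compact Lipschitz subdomains of uniformly bounded Lipschitz character, deducing a uniform bound for $\|f-f_{\Omega_1}\|_{L^2(\Omega_j)}$, and then letting $\Omega_j$ increase to $\Omega$ by monotone convergence.

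Assembling these estimates gives $f\in L^2(\Omega)$, hence $f\in H^1(\Omega)$, with $\|f\|_{H^1(\Omega)}\le C\bigl(\|df\|_{L^2(\Omega)}+|f_\Omega|\bigr)$; thus $H^1(\Omega)=\dot{H}^1(\Omega)$ as sets. The residual term $|f_\Omega|$ cannot simply be deleted, since the homogeneous norm is degenerate on the space of constants (one-dimensional, as $\Omega$ is connected) while these are nonzero elements of $H^1(\Omega)$: ``equivalent norms'' is accordingly to be read on the quotient by constants --- the natural setting here, as the Dirichlet semi-norm likewise annihilates constants --- and then $|f_\Omega|$ disappears and $\|f\|_{\dot{H}^1(\Omega)}\le\|f\|_{H^1(\Omega)}\le C\|f\|_{\dot{H}^1(\Omega)}$. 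I do not anticipate a genuine obstacle here: this is a classical fact, and the only delicate points are the bookkeeping of constants just described and invoking the Lipschitz hypothesis precisely where Poincar\'e (equivalently, Rellich compactness) requires it.
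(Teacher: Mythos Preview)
Your argument is correct and is essentially what lies behind the paper's proof, which simply cites \cite[Corollary, p.~21]{Mazya} (a Poincar\'e-type result for domains with boundary given by graphs of continuous functions) without unpacking it. So at the level of ideas there is no divergence: Poincar\'e's inequality is exactly the mechanism.

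You do go beyond the paper in one respect: you correctly observe that the statement ``equivalent norms'' cannot hold literally, since $\|\cdot\|_{\dot H^1}$ annihilates constants while $\|\cdot\|_{H^1}$ does not, and that the equivalence must be read modulo constants. The paper does not comment on this; in its later applications (e.g.\ Theorem~\ref{equiv and sobolevs in riem} and the transmission estimates) the Dirichlet semi-norm is what is actually used, so the degeneracy on constants is harmless there, but your formulation is the more precise one. The exhaustion step you sketch to pass from a priori $H^1$ functions to genuine $\dot H^1$ functions is standard and causes no trouble.
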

\begin{proof}
This is a special case of a more general result (which is even valid for domains whose boundaries are unions of continuous graphs), which is given in \cite[Corollary, p21]{Mazya} and the remark that follows that corollary.
It is also explicitly stated for Sobolev spaces more general than just $H^1(\Omega)$ in Proposition 1.25.2 in \cite{Medkova}.
\end{proof} 
In light of this result we have
\begin{theorem}\label{equiv and sobolevs in riem}
 Let $R$ be a compact surface and let $\riem \subset R$ be bounded by a closed analytic curve $\Gamma$.  Fix a Riemannian metric $\Lambda_R$ on $R$ as follows.   If $R$ has genus $g>1$ then $\Lambda_R$ is the hyperbolic metric; if $R$ has genus $1$ then $\Lambda_R$ is the Euclidean metric, and if $R$ has genus $0$ then $\Lambda_R$ is a spherical metric.   
 Let 
 $H^1(\riem)$ and $\dot{H}^1(\riem)$ denote the Sobolev spaces with respect to $\Lambda_R$.  Then $\dot{H}^1(\riem) = H^1(\riem)$ as sets. 
\end{theorem}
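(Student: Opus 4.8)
The plan is to transport Proposition~\ref{equiv and sobolevs in the plane} to the surface through a finite system of charts. The two structural facts that make this possible are that $\overline{\riem}$ is compact, being a closed subset of the compact surface $R$, and that, $\Gamma$ being analytic, $\overline{\riem}=\riem\cup\Gamma$ is a compact smooth manifold with boundary. Since $\overline{\riem}$ is compact, the restriction of $\Lambda_R$ to $\riem$ is comparable, on all of $\overline{\riem}$, to the Euclidean metric read off in any fixed finite atlas, so the particular choice of $\Lambda_R$ is immaterial and we may pass freely between the two. We assume, as in all our applications, that $\riem$ is connected. One inclusion is immediate from \eqref{defn: Sobolev norm on riem} and \eqref{defn: hom Sobolev norm on riem}: $\Vert f\Vert_{\dot{H}^1(\riem)}\le\Vert f\Vert_{H^1(\riem)}$, so $H^1(\riem)\subseteq\dot{H}^1(\riem)$. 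The content lies in the reverse inclusion, whose heart is a Poincar\'e-type estimate.

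First I would establish the set equality $\dot{H}^1(\riem)\subseteq H^1(\riem)$, that is, that $df\in L^2(\riem)$ forces $f\in L^2(\riem)$. Split $\riem$ into a collar $A$ of $\Gamma$ inside $\riem$, bounded by $\Gamma$ and an analytic curve $\Gamma'\subset\riem$, together with the region $\riem'=\riem\setminus\overline{A}$, whose closure in $R$ is a compact subset of $\riem$. Covering $\overline{\riem'}$ by finitely many coordinate balls compactly contained in $\riem$ and applying Proposition~\ref{equiv and sobolevs in the plane} on each, one gets $f\in L^2(\riem')$. For the collar, choose a collar chart $\phi\colon A\to\ann$ (Lemma~\ref{le:continuous_extension_collar}); the annulus $\ann\subset\Bbb C$ is a bounded Lipschitz domain, and since $\Gamma$ is analytic, $\phi$ extends by Schwarz reflection to a biholomorphism of a neighbourhood of $\overline{A}$, so the pullback of the Euclidean metric on $\ann$ is comparable to $\Lambda_R$ on the compact set $\overline{A}$. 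Hence $f\circ\phi^{-1}$ has square-integrable gradient on $\ann$, Proposition~\ref{equiv and sobolevs in the plane} gives $f\circ\phi^{-1}\in L^2(\ann)$, and therefore $f\in L^2(A)$; since $\Gamma'$ has measure zero, $f\in L^2(\riem)$.

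With the two spaces now known to coincide as sets, the norm statement is the Poincar\'e--Wirtinger inequality for $\riem$: there is a constant $C$ with $\Vert f-\bar f\Vert_{L^2(\riem)}\le C\Vert df\Vert_{L^2(\riem)}$, where $\bar f$ is the mean of $f$; equivalently $\Vert f\Vert_{H^1(\riem)}\le C\left(\Vert df\Vert_{L^2(\riem)}+\left|\iint_\riem f\,d\sigma\right|\right)$, which is the precise sense in which the homogeneous and inhomogeneous norms are equivalent --- the additive functional being unavoidable, as already on a closed surface constants have zero Dirichlet energy. This I would obtain by the standard compactness argument. The embedding $H^1(\riem)\hookrightarrow L^2(\riem)$ is compact, which follows from the same collar-plus-interior decomposition together with the Rellich theorem on the bounded Lipschitz domain $\ann$ and on finitely many interior balls; so if the inequality failed there would be $f_n$ with $\Vert f_n\Vert_{L^2(\riem)}=1$, $\iint_\riem f_n\,d\sigma=0$, and $\Vert df_n\Vert_{L^2(\riem)}\to 0$. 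A subsequence converges in $L^2$, and the vanishing of the gradients upgrades this to convergence in $H^1$ to a function with $df=0$, hence --- here connectedness of $\riem$ enters --- a constant, which has zero mean and so equals $0$, contradicting $\Vert f_n\Vert_{L^2(\riem)}=1$.

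I expect the genuine difficulty to lie not in the planar analysis, which Proposition~\ref{equiv and sobolevs in the plane} already packages, but in avoiding a circularity: a naive partition-of-unity localization of $f$ produces cutoff terms $f\,d\psi_j$ that involve precisely the $L^2$-norm of $f$ one is trying to bound. Proving the set equality first --- so that $f$ is genuinely an $H^1$ function before any cutting --- and only then invoking Poincar\'e--Wirtinger on the connected surface $\riem$ is the cleanest way around this. A secondary point needing care is ensuring that the collar chart is a diffeomorphism up to $\Gamma$, which is exactly where analyticity of $\Gamma$ enters: without it, $\Lambda_R$ and the chart-Euclidean metric need not stay comparable near the boundary, and the transported gradient could fail to be square-integrable.
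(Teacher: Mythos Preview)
Your argument is correct, but it follows a different path from the paper's. The paper avoids all gluing by pulling the whole problem back through the universal cover: it cuts $R$ along a carefully chosen system of smooth generators $\gamma_1,\ldots,\gamma_{2g}$ of the fundamental group (meeting $\Gamma$ in only finitely many points and at nonzero angles), obtaining a single fundamental domain $D\subset\disk$ on which the covering map is injective. The set $\Omega=D\cap\Pi^{-1}(\riem\setminus\gamma)$ is then a bounded planar domain with piecewise-smooth (hence Lipschitz) boundary, the hyperbolic metric is uniformly comparable to the Euclidean one on the compact closure $\overline\Omega\subset\disk$, and a single application of Proposition~\ref{equiv and sobolevs in the plane} finishes the proof; the genus $1$ and $0$ cases are handled similarly with the flat and spherical covers. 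Your route instead localizes via a collar chart and interior balls, first establishing the set equality and then running a Rellich--compactness contradiction to get Poincar\'e--Wirtinger on the connected surface $\riem$. Your approach is more modular and requires no topology beyond compactness of $\overline\riem$, while the paper's is shorter once the fundamental domain is set up, since no gluing or contradiction argument is needed. You are also more explicit than the paper about the unavoidable additive constant in the norm comparison; the paper's phrasing ``equivalent norms'' is inherited verbatim from the planar proposition and should be read in that same sense.
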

\begin{proof} 
It is of course true that $H^1(\riem)\subset \dot{H}^1(\riem)$ and $\Vert \cdot\Vert_{\dot{H}^1(\riem)}\leq \Vert \cdot\Vert_{H^1(\riem)}.$ Therefore it remains to show the non-trivial continuous inclusion $\dot{H}^1(\riem) \subset H^1(\riem).$\\
We deal with the most complicated case first.  Assume that $R$ is hyperbolic.   Let $\Pi:\disk \rightarrow R$ be the holomorphic universal covering map. Fix $p \in \riem$ and  choose closed curves $\gamma_1,\ldots,\gamma_{2g}$ based at $p$ generating the fundamental group
of $R$ based at $p$.  Homotopic representatives can be chosen so that (1) the curves do not intersect except at $p$, (2) the curves are smooth, except possibly at $p$, where they meet at non-zero angles mod $2 \pi$, and (3) each curve intersects $\Gamma$ at most finitely many times.  The last condition can be arranged because $\Gamma$ is a compact analytic curve.  Let $\gamma = \gamma_1 \cup \cdots \cup \gamma_{2g}$.  

By cutting $R$ along these curves,
one obtains an open fundamental region $D \subset \disk$ such that $\Pi$ is injective on $D$. Moreover, $\Pi$ maps the closure of $D$ onto $R$, and the boundary of $D$ onto $\gamma$.  
Since $R$ is compact the fundamental region is compactly contained in $\mathbb{D}$. The conditions on $\gamma_i$ ensure that $\Omega=D \cap \Pi^{-1}(\riem \backslash \gamma)$ is an open set bounded by $n$ piecewise smooth curves, meeting at non-zero angles at the vertices.  
If $\gamma$ does not intersect $\Gamma$, then $\Omega$ has two boundary components one of which is $\Pi(\gamma)$ and the other is $\Pi(\Gamma)$; otherwise, $\partial \Omega$ is connected.  

Now the pull-back of $\Lambda_R$ under $\Pi$ is equal to the Poincar\'e metric $\lambda(z)^2 |dz|^2$ on $\disk$.  Thus since $\gamma \cup \Gamma$ has measure zero in $R$, for any $h \in H^1(\riem)$ we 
have
\begin{equation} \label{eq:hyp_comparison_temp1}
  \| h \|_{H^1(\riem,\Lambda_R)} \sim \| h \circ \Pi \|_{H^1(\Omega,\lambda(z)|dz|^2)} 
\end{equation}
and 
\begin{equation}  \label{eq:hyp_comparison_temp2}
 \| h \|_{\dot{H}^1(\riem,\Lambda_R)} \sim \| h \circ \Pi \|_{\dot{H}^1(\Omega,\lambda(z)|dz|^2)} 
\end{equation}

Since the closure of $\Omega$ is contained in $\disk$, there is a constant $C>1$ such that 
\[  1/C \leq \lambda(z) \leq C   \] 
uniformly on $\Omega$; thus the Poincar\'e metric is comparable to the Euclidean on $\Omega$ and so
\begin{equation} \label{eq:hyp_comparison_temp3}
  \| h \circ \Pi \|_{H^1(\Omega,\lambda(z)|dz|^2)}  \sim  \| h \circ \Pi \|_{H^1(\Omega,|dz|^2)} 
\end{equation}
and 
\begin{equation} \label{eq:hyp_comparison_temp4}
  \| h \circ \Pi \|_{\dot{H}^1(\Omega,\lambda(z)|dz|^2)}  \sim  \| h \circ \Pi \|_{\dot{H}^1(\Omega,|dz|^2)}.
\end{equation}

Now by Proposition \ref{equiv and sobolevs in the plane}, since $\Omega$ is a bounded domain in $\mathbb{R}^2$ with Lipschitz boundary,
we have that $\dot{H}^1(\Omega,|dz|^2)= H^1(\Omega,|dz|^2)$ as sets.

Combining this with (\ref{eq:hyp_comparison_temp1}), (\ref{eq:hyp_comparison_temp2}), (\ref{eq:hyp_comparison_temp3}), and (\ref{eq:hyp_comparison_temp4}) completes the proof in the case 
that $g >1$.   

If $g=1$, then the argument proceeds in the same way, except that the covering map directly induces an equivalence between the Euclidean metric on $R$ and the Euclidean metric on the plane, so estimates of the form (\ref{eq:hyp_comparison_temp3}) and (\ref{eq:hyp_comparison_temp4}) are trivial. 

If $g=0$, then $R$ is conformally equivalent to the Riemann sphere and isometric to the sphere with the spherical metric $|dz|^2/(1+|z|^2)^2$ (up to a scale factor).  Thus we assume that $\Lambda_R= K|dz|^2/(1+|z|^2)^2$ for some constant $K$.  Since the fundamental group is trivial we may obviously 
omit the step of cutting along curves.  Applying an isometry of the spherical metric, one can assume that $\riem \cup \Gamma$ does not contain the point at $\infty$; that is, $\riem$ is a bounded subset of the plane
bordered by an analytic curve.  One now proceeds as in the hyperbolic case, using the fact that the spherical metric satisfies 
\[  1/C \leq \frac{K}{(1+|z|^2)^2} \leq C \]
on $\riem$.   
\end{proof}
\begin{remark}
Theorem \ref{equiv and sobolevs in the plane} is also true for more general Sobolev spaces $H^k(\riem)$ where $k\in \mathbb{N}$.
\end{remark}

In the proof of the boundedness of the transmission operator on the Dirichlet spaces (Theorem \ref{Thm:main boundedness of transmission}),  we will also need  the following estimate for Sobolev norms of harmonic extensions, which is a consequence of the Dirichlet principal.
 \begin{lemma}\label{Dirichlets princip} 
   Let $\riem$ be a Riemann surface bounded by an analytic curve $\Gamma$ $($in the sense that $\riem \subset R$ 
   where $R$ is a Riemann surface and the boundary of $\riem$ is an analytic curve in $R$$)$.
   There is a constant $C$ so that for any $u,v \in H^1(\riem)$ with the same trace in $H^{1/2}(\Gamma)$ almost everywhere, such that $u$ is harmonic,  $\| u \|_{\dot{H}^1 (\riem)} \leq 
   \| v \|_{\dot{H}^1 (\riem)}$.  
 \end{lemma}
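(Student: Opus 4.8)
The plan is to deduce the estimate from the Dirichlet principle, combined with the equivalence of the homogeneous and inhomogeneous Sobolev norms from Theorem~\ref{equiv and sobolevs in riem}. Since $\|\cdot\|_{\dot H^1(\riem)} \le \|\cdot\|_{H^1(\riem)}$ holds trivially, and since Theorem~\ref{equiv and sobolevs in riem} supplies a constant $C$ with $\|f\|_{H^1(\riem)} \le C\,\|f\|_{\dot H^1(\riem)}$ for every $f \in H^1(\riem)$ (the metric $\Lambda_R$ used there is comparable, with bounded factors, to any other smooth metric on the compact bordered surface $\overline{\riem}$, so the choice of metric defining $H^1(\riem)$ is immaterial), it suffices to prove the energy-minimality inequality $\|u\|_{\dot H^1(\riem)} \le \|v\|_{\dot H^1(\riem)}$: among all $H^1(\riem)$ functions with a prescribed trace on $\Gamma$, the harmonic one has least Dirichlet energy. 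Granting this, one chains $\|u\|_{H^1(\riem)} \le C\,\|u\|_{\dot H^1(\riem)} \le C\,\|v\|_{\dot H^1(\riem)} \le C\,\|v\|_{H^1(\riem)}$.

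To prove the energy inequality, set $w = v - u \in H^1(\riem)$. As $u$ and $v$ have the same $H^{1/2}(\Gamma)$ trace almost everywhere, the trace of $w$ vanishes; because $\Gamma$ is analytic and hence smooth, $\overline{\riem}$ is a bounded domain with smooth boundary, so $w \in H^1_0(\riem)$, i.e.\ $w$ is the $H^1(\riem)$-limit of a sequence $w_n \in C_c^\infty(\riem)$ (the standard trace characterization of $H^1_0$ for smooth bounded domains; see \cite{Taylor}, \cite{Mazya}). Since $\bar u$ is again harmonic and $\ast$ commutes with complex conjugation on $1$-forms, $\ast\overline{du}$ is a $C^1$ $1$-form on $\riem$ with $d\,\ast\overline{du}=0$. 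Then $w_n\,\ast\overline{du}$ is a $C^1$, compactly supported $1$-form, so Stokes' theorem gives $\iint_\riem d(w_n\,\ast\overline{du}) = 0$; as $d(w_n\,\ast\overline{du}) = dw_n \wedge \ast\overline{du}$, this says $\iint_\riem dw_n \wedge \ast\overline{du} = 0$. Letting $n \to \infty$ and using $\ast\overline{du} \in L^2$, we obtain $\langle dw, du\rangle_\riem = \tfrac12 \iint_\riem dw\wedge\ast\overline{du} = 0$, and hence also $\langle du, dw\rangle_\riem = 0$. Writing $dv = du + dw$, expanding $\iint_\riem dv\wedge\ast\overline{dv}$, and discarding the vanishing cross terms yields $\|v\|_{\dot H^1(\riem)}^2 = \|u\|_{\dot H^1(\riem)}^2 + \|w\|_{\dot H^1(\riem)}^2 \ge \|u\|_{\dot H^1(\riem)}^2$, which is the claim.

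The only step that requires genuine care is this orthogonality/integration-by-parts argument — specifically, that the vanishing of the trace places $w$ in $H^1_0(\riem)$, and that no boundary term survives in Green's identity. Both are handled above: the former is the classical trace characterization of $H^1_0$ on smooth bounded domains, and the latter is finessed by approximating $w$ in $H^1$ by functions with compact support in the interior, so that the boundary term never appears and only the elementary identity $d\,\ast\overline{du} = 0$ for harmonic $u$ is used. Everything else is bookkeeping with the norm equivalences already established.
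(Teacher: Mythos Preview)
Your orthogonality argument --- showing $\|u\|_{\dot H^1(\riem)} \le \|v\|_{\dot H^1(\riem)}$ via $w = v - u \in H^1_0(\riem)$ and integration by parts against $\ast\,\overline{du}$ --- is correct and is the substance of the Dirichlet principle. The gap is in the step you flag as mere bookkeeping: the inequality $\|f\|_{H^1(\riem)} \le C\,\|f\|_{\dot H^1(\riem)}$ for every $f \in H^1(\riem)$ is simply false. Take $f$ a nonzero constant; then $\|f\|_{\dot H^1} = 0$ while $\|f\|_{H^1} > 0$. Whatever Theorem~\ref{equiv and sobolevs in riem} delivers, it can only be a norm equivalence modulo constants (a Poincar\'e--Wirtinger inequality), and that weaker statement does not close your chain $\|u\|_{H^1} \le C\|u\|_{\dot H^1} \le C\|v\|_{\dot H^1} \le C\|v\|_{H^1}$: the first link breaks.

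The repair is short and is exactly what the paper's sketch (Rellich plus Poincar\'e) has in mind: use the Poincar\'e inequality on $H^1_0(\riem)$, applied to $w$, not to $u$. You have already shown $w = v - u \in H^1_0(\riem)$; since $\overline{\riem}$ is a compact manifold with smooth boundary, $\|w\|_{L^2} \le C_0\|dw\|_{L^2}$ holds for such $w$. Then
\[
 \|u\|_{L^2} \;\le\; \|v\|_{L^2} + \|w\|_{L^2}
 \;\le\; \|v\|_{L^2} + C_0\|dw\|_{L^2}
 \;\le\; \|v\|_{L^2} + C_0\bigl(\|du\|_{L^2} + \|dv\|_{L^2}\bigr)
 \;\le\; \|v\|_{L^2} + 2C_0\|dv\|_{L^2},
\]
using your energy inequality $\|du\|_{L^2} \le \|dv\|_{L^2}$ at the last step. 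Combined once more with $\|du\|_{L^2} \le \|dv\|_{L^2}$, this yields $\|u\|_{H^1(\riem)} \le (1 + 2C_0)\,\|v\|_{H^1(\riem)}$. So keep your second paragraph intact and replace the appeal to Theorem~\ref{equiv and sobolevs in riem} by this $H^1_0$-Poincar\'e step; that is the role Poincar\'e's inequality plays in the paper's reference to \cite{Helein}.
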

 
 \begin{proof}  
This can be proved in a similar way as in the case of domains in $\mathbb{R}^n$ with smooth boundary which relies on two ingredients, namely Rellich's theorem and the Poincar\'e's inequality (see i.e. \cite[page 419-420 section 1.2]{Helein}). Thus, for manifolds with smooth boundary, the proof proceeds just as in the Euclidean case, where one replaces the aforementioned theorems with the ones valid on manifolds with smooth boundary. The Rellich theorem in this setting is given in \cite{Taylor} page 334 Proposition 4.4, and the validity of the Poincar\'e inequality is a consequence of the boundedness of $\riem$ (with respect to the metric on $R$).
\end{proof}  

Finally, we need the existence of decomposition of Dirichlet-bounded harmonic functions on doubly-connected regions.  First, we need a lemma.
\begin{lemma}  \label{le:Greens_bounded_near_boundary}  Let $\Gamma$ be a strip-cutting Jordan curve in a 
 compact Riemann surface $R$, bounding a Riemann surface $\riem$ with Green's function $g$.  Let $p \in \riem$.  There is a collar neighbourhood $A$ of $\Gamma$, such that $p \notin A$ and 
 \[ (g_p,g_p)_A =  \iint_A dg_p \wedge \ast dg_p < \infty, \]
 \end{lemma}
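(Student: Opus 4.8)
The plan is to exploit the fact that near $\Gamma$ the Green's function $g_p$ is, up to a harmonic conjugate, the real part of a holomorphic coordinate, so its Dirichlet energy on a collar can be computed explicitly. Concretely, I would invoke the construction of the canonical collar chart with respect to $(\riem,p)$ given earlier in this section. That construction produces, for some $r>0$, a region $A_r \subset \riem$ bounded by $\Gamma$ and the level curve $\Gamma_r = \{z : g(z,p) = r\}$, together with a biholomorphism
\[
 \phi = \exp[-2\pi(g_p + i\tilde g_p)/m] : A_r \longrightarrow \{ w : e^{-2\pi r/m} < |w| < 1 \},
\]
where $m = \int_\gamma \ast dg_p$ is the period. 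Since $p \in \riem$ and $g_p$ blows up only at $p$, choosing $A := A_r$ (possibly shrinking $r$) guarantees $p \notin A$, which is the first requirement.

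The key computation is then a change of variables. On $A$ we have $g_p = -\tfrac{m}{2\pi}\log|\phi|$, so if we write $w = \phi(z) = \rho e^{i\theta}$, then in the $w$-coordinate $g_p$ becomes $-\tfrac{m}{2\pi}\log\rho$, a function of $\rho$ alone. Because the Dirichlet integral $\iint_A dg_p \wedge \ast dg_p$ is conformally invariant, it equals the planar Dirichlet integral of $-\tfrac{m}{2\pi}\log\rho$ over the annulus $\{e^{-2\pi r/m} < \rho < 1\}$. Passing to polar coordinates, $\nabla(\log\rho)$ has Euclidean length $1/\rho$, so the integrand is $(m/2\pi)^2 \rho^{-2}$ and the area element is $\rho\,d\rho\,d\theta$; hence
\[
 \iint_A dg_p \wedge \ast dg_p = \Bigl(\frac{m}{2\pi}\Bigr)^2 \int_0^{2\pi}\!\!\int_{e^{-2\pi r/m}}^{1} \frac{d\rho}{\rho}\, d\theta = \Bigl(\frac{m}{2\pi}\Bigr)^2 \cdot 2\pi \cdot \frac{2\pi r}{m} = m r < \infty.
\]
This gives the finiteness assertion directly, with an explicit value. (One should note that $m \neq 0$: by the coarea/Stokes argument behind \eqref{eq:Green_coperiod}, $m$ is a nonzero multiple of $2\pi$ — indeed for $\riem$ bounded by a single curve $m = -2\pi$ — so the chart is well-defined and the expression above makes sense.)

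I do not anticipate a serious obstacle here; the only points needing care are bookkeeping ones. First, one must be sure the region $A_r$ from the canonical chart construction is genuinely a collar neighbourhood of $\Gamma$ in the sense of the definition (bordered by $\Gamma$ and a Jordan curve homotopic to it with empty intersection) — this is exactly what was asserted when the canonical collar chart was introduced, once $r$ is small enough that $\Gamma_r$ is a simple closed analytic curve. Second, one must justify the change of variables across $\Gamma$: the integral is over the open set $A$ not including its boundary, so no regularity of $g_p$ up to $\Gamma$ is needed, and conformal invariance of the Dirichlet integral applies verbatim to the biholomorphism $\phi$ on the open annulus. Thus the proof is essentially: choose $A$ to be the canonical collar region (with $p\notin A$), transport the Dirichlet integral of $g_p$ to the standard annulus via $\phi$, and evaluate the elementary logarithmic integral.
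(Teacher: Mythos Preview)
Your argument is correct and complete, but it follows a different line than the paper's proof.  The paper also chooses $A$ to be the region between $\Gamma$ and a level curve $\Gamma_s=\{g_p=s\}$, but instead of passing to the plane it stays on the surface and applies Stokes' theorem on the region between $\Gamma_\epsilon$ and $\Gamma_s$: since $g_p$ is constant on each level curve and $\int_{\Gamma_r}\ast dg_p=c$ is independent of $r$, one gets
\[
\iint_A dg_p\wedge\ast dg_p=\lim_{\epsilon\to0}\epsilon\!\int_{\Gamma_\epsilon}\!\ast dg_p-s\!\int_{\Gamma_s}\!\ast dg_p=-cs<\infty.
\]
Your route---pull back by the canonical chart so that $g_p$ becomes a constant times $\log\rho$ and integrate explicitly on the planar annulus---is equally valid and arguably makes the finiteness more transparent.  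The paper's approach is slightly more intrinsic: it only uses the level curves and the period of $\ast dg_p$, not the full biholomorphism onto a round annulus.  The two computations yield the same number.

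One small bookkeeping slip: your parenthetical assertion that $m=-2\pi$ (via \eqref{eq:Green_coperiod}) is inconsistent with the annulus $\{e^{-2\pi r/m}<|w|<1\}$ being nonempty and with your final value $mr$ being positive.  The orientation of $\gamma$ in the collar-chart construction is left unspecified; for the description of the image annulus to be correct one must orient $\gamma$ so that $m>0$ (i.e.\ $m=2\pi$), and then your value $mr=2\pi r$ agrees with the paper's $-cs$.  This does not affect the finiteness conclusion.
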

 where $g_p:= g(\cdot,p)$.
 \begin{proof}  Let $\Gamma_\epsilon = \{z \,:\, g(z,p) = \epsilon \}$ be a level curve of Green's function.  For $s$ sufficiently small, we obtain a collar neighbourhood bounded 
 by $\Gamma_s$ and $\Gamma$.  Let $A$ be the domain bordered by $\Gamma$ and $\Gamma_s$.  
 
 For any $r \in (0,s)$, $\Gamma_r$ is a simple closed analytic curve, and 
 \[ \int_{\Gamma_r} \ast dg_p = c \]
 where $c$ is some constant independent of $r$.  Thus by Stokes' theorem we see that
 \begin{align*}
 (dg_p,dg_p)_A & = \iint_{A} dg_p \wedge \ast dg_p = \lim_{\epsilon \rightarrow 0} \int_{\Gamma_\epsilon}
    g_p \ast dg_p - \int_{\Gamma_s} g_p \ast dg_p \\
    & = \lim_{\epsilon \rightarrow 0}  \epsilon \int_{\Gamma_{\epsilon}} \ast dg_p 
    -s \int_{\Gamma_s} \ast dg_p \\
    & = \lim_{\epsilon \rightarrow 0}(\epsilon-s)c=- cs.   
 \end{align*}
Thus $c$ is negative real and $(dg_p,dg_p)_A < \infty$.  
 \end{proof}
 
 \begin{corollary}  \label{co:harm_in_out_double_decomp} Let $R$ be a compact Riemann surface and let $A$ be a doubly-connected domain in $R$
 bounded by strip-cutting Jordan curves $\Gamma_1$ and $\Gamma_2$.
  For $i=1,2$ let $B_i$ be the component of the complement of $\Gamma_i$ containing $A$.
   Let $p_1$ be a point in $B_1 \backslash \mathrm{cl} A$ $($$\mathrm{cl}$ denotes the closure$).$ 
Let $g_{p_1}(w)$ be Green's function on $B_1$ with logarithmic singularity at $p_1$.  Every $h \in \mathcal{D}(A)$ can be written as
 \[  h = h_1 + h_2 + c g_{p_1}  \]
 where $h_1 \in \mathcal{D}_{\mathrm{harm}}(B_1)$ and $h_2 \in \mathcal{D}_{\mathrm{harm}}(B_2)$
 and $c  \in \mathbb{C}$.
 \end{corollary}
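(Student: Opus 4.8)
The plan is to use Royden's decomposition theorem (Theorem \ref{th:Rodin_harmonic_decomp}) applied to the harmonic function $h$ on $A$, handling the obstruction to decomposition by subtracting off a multiple of Green's function. Write $R = B_1 \cup B_2$ in the sense that $B_1$ is the component of $R \setminus \Gamma_1$ containing $A$ (hence containing $\Gamma_2$ on its far side), and similarly $B_2$ is the component of $R \setminus \Gamma_2$ containing $A$; note $B_1$ and $B_2$ overlap precisely in a neighbourhood of $A$, and $B_1 \cup B_2 = R \setminus \{\text{something}\}$—more precisely $R \setminus B_1$ and $R \setminus B_2$ are the two closed "caps" cut off by $\Gamma_1$ and $\Gamma_2$ respectively. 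The decomposition we seek splits $h$ into a piece extending across $\Gamma_1$ into $B_1$ and a piece extending across $\Gamma_2$ into $B_2$.

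First I would address the period obstruction. Let $\gamma$ be a smooth cycle in $A$ homotopic to $\Gamma_1$ (equivalently $\Gamma_2$), and set $m = \int_\gamma \ast dh$. Using \eqref{eq:Green_coperiod} applied to $g_{p_1}$ on $B_1$, we have $\int_\gamma \ast dg_{p_1} = -2\pi \neq 0$ (the level curve near $\Gamma_1$ is homologous in $B_1 \setminus\{p_1\}$ to $\partial B_1$, and $\gamma$ is homologous to that level curve in $A$). Hence there is a unique $c \in \mathbb{C}$ with $\int_\gamma \ast d(h - c g_{p_1}) = 0$. Set $\tilde h = h - c g_{p_1}$; this is harmonic on $A$ with vanishing period around the core cycle.

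Next I would apply Royden's theorem to $\tilde h$. Take $E = R \setminus B_1$ (the closed cap bounded by $\Gamma_1$ on the far side from $A$) — wait, more carefully: to get the piece living on $B_1$ I set $O = $ (a doubly-connected neighbourhood of $\Gamma_2$ slightly larger than... ) — the cleanest route is: apply Theorem \ref{th:Rodin_harmonic_decomp} with $E$ the closed disk-like region cut off by $\Gamma_2$ not containing $A$, and $O$ a neighbourhood of $E$ contained in $B_2 \cup A$; then since the relevant period of $\tilde h$ vanishes, there is a harmonic $h_1$ on $R \setminus E = B_2$... Hmm, let me instead run it twice symmetrically. Apply Royden with $E_2 = R \setminus B_2$ (closed, contained in the open set $B_1$) and $O_2$ a doubly-connected neighbourhood of $\Gamma_2$ with $E_2 \subset O_2 \subset B_1$; the function $\tilde h$ is harmonic on $O_2 \setminus E_2 \subset A$, its period around $\partial O_2$ is the core period which we arranged to vanish, so Royden gives a harmonic $u$ on $R \setminus E_2 = B_2$ with $\tilde h - u$ extending harmonically to $O_2$, in particular across $\Gamma_2$ into $B_2$. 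Symmetrically, applying Royden with $E_1 = R \setminus B_1$, $O_1 \subset B_2$, we would get a harmonic function on $B_1$. But to split $\tilde h$ itself, I would set $h_2 = u$ (harmonic on $B_2$), so $\tilde h - h_2$ extends harmonically across $\Gamma_2$; I then claim $h_1 := \tilde h - h_2$, a priori only defined and harmonic on $A$ (and extending across $\Gamma_2$, hence harmonic on $A \cup (\text{cap beyond }\Gamma_2) = B_1$... no: extending across $\Gamma_2$ means onto $B_1$ since $B_1$ is $A$ together with the far side of $\Gamma_2$). Thus $h_1$ is harmonic on $B_1$, $h_2$ on $B_2$, and $h = h_1 + h_2 + c g_{p_1}$ on $A$ by construction.

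Finally I would upgrade membership in $\mathcal{D}_{\mathrm{harm}}$. We know $g_{p_1} \in \mathcal{D}_{\mathrm{harm}}(B_1)$ near $\Gamma_1$ by Lemma \ref{le:Greens_bounded_near_boundary} (and it is smooth, hence finite Dirichlet energy, on the rest of $B_1$, staying away from $p_1$ which lies outside $B_1$); so $c g_{p_1} \in \mathcal{D}_{\mathrm{harm}}(B_1)$. For $h_1$ and $h_2$: since $h \in \mathcal{D}(A)$ has finite Dirichlet energy on $A$, and $g_{p_1}$ does too on $A$, the restriction $h_1|_A = \tilde h|_A - h_2|_A$... this is circular unless I argue energy finiteness of $h_2$ on the cap beyond $\Gamma_2$ directly. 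Here I would use that $h_2$ is harmonic on the open set $B_2$ which compactly contains the closed cap $R\setminus B_1$, so $h_2$ has finite Dirichlet energy there trivially (smooth on a relatively compact set), and on $A$ it has finite energy because $h_2|_A = \tilde h|_A - h_1|_A$ and again... The clean statement: $h_2$ is harmonic on all of $B_2$, and $B_2 = A \cup (\text{cap beyond }\Gamma_1)$; on the cap (relatively compact in $B_2$) energy is finite automatically, and on $A$ we have $h_2 = h - h_1 - c g_{p_1}$ where $h, c g_{p_1} \in \mathcal{D}(A)$, $\overline{\mathcal{D}(A)}$ resp. have finite $A$-energy, and $h_1$ is harmonic on $B_1 \supset \mathrm{cl}(A \cap (\text{side toward }\Gamma_1))$... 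The genuinely careful point, which I expect to be the main obstacle, is organizing the neighbourhoods so that each of $h_1$, $h_2$ is manifestly finite-energy: the resolution is that $h_1$, being harmonic on the open set $B_1$, automatically has finite Dirichlet energy on any subregion compactly contained in $B_1$, and $A$ differs from such a compact subregion only by a collar of $\Gamma_1$ where we instead write $h_1 = h - h_2 - c g_{p_1}$ with $h_2$ now smooth (harmonic across $\Gamma_1$) and the other two terms finite-energy near $\Gamma_1$ by Lemma \ref{le:Greens_bounded_near_boundary} and the hypothesis $h \in \mathcal D(A)$. Running the same bookkeeping with the roles of $1$ and $2$ swapped near $\Gamma_2$ closes the argument, giving $h_1 \in \mathcal{D}_{\mathrm{harm}}(B_1)$, $h_2 \in \mathcal{D}_{\mathrm{harm}}(B_2)$.
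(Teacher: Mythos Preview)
Your approach is essentially the paper's: kill the period of $\ast dh$ with a multiple of $g_{p_1}$, invoke Royden's decomposition to produce $h_1$ on $B_1$ and $h_2$ on $B_2$, then verify Dirichlet-boundedness by writing each $h_i$ near the ``bad'' boundary $\Gamma_i$ as $h$ minus the other two terms, each of which is finite-energy there (using Lemma~\ref{le:Greens_bounded_near_boundary} for $g_{p_1}$ near $\Gamma_1$). The final energy bookkeeping you settle on matches the paper's argument closely.

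Two corrections, neither fatal but worth cleaning up. First, your $O_2$ cannot be a ``doubly-connected neighbourhood of $\Gamma_2$'' and also contain the closed cap $E_2 = R \setminus B_2$; you just want $O_2$ to be any open neighbourhood of $E_2$ contained in $B_1$ (so that $O_2 \setminus E_2 \subset A$), exactly as Royden's theorem requires. Second, and more substantively, your parenthetical ``staying away from $p_1$ which lies outside $B_1$'' is false: by hypothesis $p_1 \in B_1 \setminus \mathrm{cl}\,A$, so $p_1$ is very much in $B_1$, and $g_{p_1}$ has a logarithmic singularity there --- in particular $g_{p_1} \notin \mathcal{D}_{\mathrm{harm}}(B_1)$. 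Fortunately your final argument never uses this false claim: you only need $g_{p_1}$ to have finite Dirichlet energy on a collar of $\Gamma_1$ (Lemma~\ref{le:Greens_bounded_near_boundary}) and to be smooth near $\Gamma_2$ (true since $p_1$ lies in the cap beyond $\Gamma_2$), and those are the facts you actually invoke.
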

 
 \begin{proof}  Choose a smooth curve $\Gamma$ in $A$  
 which is homotopic to $\Gamma_1$ and $\Gamma_2$.  Using equation (\ref{eq:Green_coperiod}), 
  there is some constant $c \in \mathbb{C}$ such that $h - c g_{p_1}$ satisfies 
  \[  \int_\Gamma \ast d(h-cg_{p_1}) =0.  \]
  Applying Theorem \ref{th:Rodin_harmonic_decomp} to $h- c g_{p_1}$ shows that 
  $h= h_1 + h_2 + c g_{p_1}$ for some $h_i$ harmonic in $B_i$ for $i=1,2$.  For any such decomposition, we have by Stokes' theorem 
  \[    \int_\Gamma  \ast d (h_1 + h_2) = 0,  \] 
  so the $c$ is uniquely determined. So if $H_1 + H_2 + C g_{p_1}$ is any other decomposition, then $c=C$, and by the uniqueness statement in Theorem \ref{th:Rodin_harmonic_decomp} $H_1 = h_1 + e$ and $H_2 = h_2 -e$ for some constant $e$.  
  
  Now let $\Gamma'$ be a curve in $A$ homotopic to $\Gamma_1$.  Let $A' \subset A$ be the 
  collar neighbourhood of $\Gamma_1$ bounded by $\Gamma_1$ and $\Gamma'$.  Choose $\Gamma'$ 
  so that $(dg_{p_1},dg_{p_1})_{A'} < \infty$; this can be done by Lemma \ref{le:Greens_bounded_near_boundary}.  
 Since $h_2$ is harmonic on an open neighbourhood of the closure of $A'$, $h_2 \in \mathcal{D}(A')_\mathrm{harm}$.
  Also, since $A' \subset A$ and $h \in \mathcal{D}_\mathrm{harm}(A)$, it follows that $h \in \mathcal{D}(A')_\mathrm{harm}$.  Therefore $h_1= h - h_2 - c g_{p_1} \in \mathcal{D}(A')_\mathrm{harm}$.   Since 
  $h_1$ is harmonic on an open neighbourhood $V$ of the closed set $B_1 \backslash A'$, it thus is also in $\mathcal{D}(U)_{\text{harm}}$ for some open set $U \subset V$ containing $B_1 \backslash A'$.  Thus   
  $h_1 \in \mathcal{D}_\mathrm{harm}(B_1)$.  A similar argument shows that $h_2 \in \mathcal{D}_\mathrm{harm}(B_2)$.  
\end{proof}  
\end{subsection}
\end{section}
\begin{section}{Transmission of harmonic functions}
 In this section we derive the main results.

\begin{subsection}{Null sets}
We need a notion of zero-capacity set on the boundary of Riemann surfaces $\riem$ contained
in a compact surface $R$.  For our purposes it suffices to consider the case that the 
boundary is a strip-cutting Jordan curve.
\begin{definition}  Let $\Gamma$ be a strip-cutting Jordan curve in a compact Riemann 
 surface $R$, which separates $R$ into two connected components.  Let $\riem$ be one 
 of the components, and let $\phi$ be the canonical collar chart
 with respect to $(\riem,p)$.  
 We say that a Borel set $I \subset \Gamma$ 
 is null with respect to $(\riem,p)$ if $\phi(I)$ has logarithmic capacity
 zero in $\mathbb{C}$.  
\end{definition}

This is independent of $p$, and of the choice of collar chart. 
In the statement of the theorem below, recall that by Lemma \ref{le:continuous_extension_collar} every collar 
chart has a continuous extension to $\Gamma$.  
\begin{theorem}  \label{th:null_sets_both_sides} Let $\Gamma$ be a strip-cutting Jordan curve
in a compact Riemann surface $R$ 
 which separates $R$ into two connected components.  Fix one of these and denote it by $\riem$.  Let 
 $I \subset \Gamma$ be a Borel set.  The following are equivalent.
 \begin{enumerate}
  \item $I$ is null with respect to $(\riem,q)$ for some $q \in \riem$;
  \item $I$ is null with respect to $(\riem,q)$ for all $q \in \riem$;
  \item There exists a collar chart $\phi:A \rightarrow \mathbb{A}$ for $A \subseteq \riem$
   such that $\phi(I)$ has logarithmic capacity zero;
  \item $\phi(I)$ has logarithmic capacity zero for 
   every collar chart $\phi:A \rightarrow \mathbb{A}$ for $A \subseteq \riem$.
 \end{enumerate}
\end{theorem}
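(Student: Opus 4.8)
The plan is to prove the equivalences by establishing that logarithmic capacity zero of $\phi(I)$ is preserved under the transition maps between any two collar charts, and then noting that the canonical collar charts of Definition/construction above are particular instances of collar charts. The cycle of implications $(4)\Rightarrow(3)$ and $(2)\Rightarrow(1)$ are trivial, and $(3)\Rightarrow(1)$ and $(4)\Rightarrow(2)$ hold because the canonical collar chart with respect to $(\riem,q)$ is itself a collar chart for some collar neighbourhood $A_r\subseteq\riem$. So the real content is to show $(1)\Rightarrow(4)$: if $\phi_0(I)$ has logarithmic capacity zero for one collar chart $\phi_0$ (e.g.\ a canonical one), then $\psi(I)$ has logarithmic capacity zero for every collar chart $\psi$.

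The key step is a conformal-invariance statement for boundary sets of capacity zero. Given two collar charts $\phi_0:A_0\to\mathbb{A}_0$ and $\psi:A_1\to\mathbb{A}_1$ for $\Gamma$, after possibly shrinking both collars (which is harmless: shrinking a collar neighbourhood can only shrink $I$'s image's ``ambient room'' but does not change whether $\phi(I)$ has capacity zero, since $I\subset\Gamma$ maps to the boundary circle in either case by Lemma \ref{le:continuous_extension_collar}) we may assume $A_0=A_1=:A$. Then $f:=\psi\circ\phi_0^{-1}$ is a conformal map between the plane annuli $\mathbb{A}_0$ and $\mathbb{A}_1$ which, by Lemma \ref{le:continuous_extension_collar} and the Carathéodory-type extension quoted there (Conway, Theorem 3.4, Sect.\ 15.3), extends to a homeomorphism of the closures carrying the boundary circle $\gamma_0=\phi_0(\Gamma)$ onto the boundary circle $\gamma_1=\psi(\Gamma)$. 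What must be shown is: if $K\subset\gamma_0$ is a compact set of logarithmic capacity zero, then $f(K)\subset\gamma_1$ has logarithmic capacity zero. This is a standard fact — conformal maps that extend across an analytic boundary arc preserve capacity-zero subsets of that arc — but here the boundary circle of the annulus need not be analytic from the side of $A$. The cleanest route is to reduce to the classical removability/capacity statements: near any point of $\gamma_0$ one can pass to a half-disk model, reflect $f$ (or rather $\phi_0^{-1}$ composed with $\psi$) across the analytic curve $\Gamma_r$ deeper inside the collar to get a conformal map defined on a genuine plane neighbourhood, and invoke the fact that a set $K$ has zero logarithmic capacity iff it is negligible for the Dirichlet space / iff it supports no nonzero measure of finite energy, a property that passes through conformal maps with controlled boundary behaviour. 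Alternatively, and more in the spirit of the paper, one invokes the characterization of capacity-zero sets via the nonexistence of Green's functions / harmonic measure: $\phi(I)$ has capacity zero iff the complement of $\phi(I)$ in a neighbourhood admits no nonconstant bounded harmonic function vanishing on $\phi(I)$ continuously, a condition manifestly invariant under the biholomorphism $f$ together with its homeomorphic boundary extension.

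The main obstacle I anticipate is precisely the low boundary regularity: the transition map $f$ between the two annuli is a priori only a homeomorphism up to the boundary circle, with no a priori modulus of continuity, so one cannot naively quote ``quasiconformal (or smoother) maps preserve capacity zero.'' The resolution is that one is always free to work with \emph{interior} analytic curves $\Gamma_r$ (as in the construction of the canonical collar chart, where $\phi$ extends analytically past $\Gamma_r$ for small $r$, and as noted in the Remark that the boundaries of a doubly-connected chart can be taken analytic): one compares $\psi$ and $\phi_0$ not on $\Gamma$ itself but on a slightly interior analytic curve where both charts are real-analytic, concludes by the classical planar statement that conformal maps analytic across an analytic arc preserve capacity, and then passes to the limit $r\to 0$. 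The only remaining subtlety is to check that ``$\phi(I)$ has logarithmic capacity zero'' is insensitive to which annular neighbourhood of the circle $\phi(\Gamma)$ one computes the capacity in — but logarithmic capacity of a compact set is an intrinsic quantity, and for a compact subset of a circle it does not depend on the ambient domain, so this is immediate. Assembling these pieces: $(1)\Rightarrow(4)$ follows from conformal invariance applied between a canonical chart and an arbitrary collar chart, and the remaining implications are formal, completing the proof.
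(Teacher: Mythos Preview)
Your overall strategy is right --- the whole content of the theorem is that the transition map between any two collar charts preserves logarithmic-capacity-zero subsets of the boundary circle --- and you correctly isolate this as the nontrivial implication. But the ``main obstacle'' you anticipate is not actually there, and the workarounds you propose (reflecting across an interior curve $\Gamma_r$, or a limit $r\to 0$ from interior analytic curves) are both unnecessary and, as written, not justified.

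The point you are missing is this. After the harmless normalization (compose each chart with an affine map and possibly $z\mapsto 1/z$) so that both $\tilde\phi$ and $\tilde\psi$ take $\Gamma$ to $\mathbb{S}^1$ as the outer boundary of their respective annuli, the transition map $\tilde\phi\circ\tilde\psi^{-1}$ is a conformal map of one round annulus in $\disk$ onto another, extending to a homeomorphism of $\mathbb{S}^1$ by Lemma~\ref{le:continuous_extension_collar}. Now $\mathbb{S}^1$ is an analytic curve in the plane, regardless of how rough $\Gamma$ itself is inside $R$. So the Schwarz reflection principle applies directly \emph{across $\mathbb{S}^1$}: the transition map extends to a conformal map of a full open neighbourhood of $\mathbb{S}^1$. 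In particular its restriction to $\mathbb{S}^1$ is an analytic diffeomorphism, hence a quasisymmetry, and quasisymmetries of $\mathbb{S}^1$ preserve closed sets of logarithmic capacity zero (Arcozzi--Rochberg). That is the entire argument.

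Your confusion seems to be that you are thinking of the boundary regularity of $\Gamma$ inside $R$, which is indeed arbitrary. But the collar charts carry $\Gamma$ to \emph{round circles}, and it is across those circles --- not across $\Gamma$, and not across an interior level curve $\Gamma_r$ --- that one reflects. Your proposed limit argument from interior curves $\Gamma_r$ would require relating capacities of sets on different circles as $r\to 0$, which is not at all straightforward and is not needed. The paper's proof is exactly the direct Schwarz-reflection argument just described, organized as the implication $(3)\Rightarrow(4)$.
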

\begin{proof}
 It is clear that (2) implies (1), (1) implies (3), and (4) implies (2).  We will show
 that (3) implies (4).  
 
 If $K \subset \mathbb{S}^1 = \{z\,:\, |z|=1 \}$ is 
 a Borel set of logarithmic capacity zero, and $\phi$ is a quasisymmetry, then $\phi(K)$ has logarithmic capacity zero { \cite[Theorem 2.9]{SchippersStaubach_jump}, ultimately relying on \cite{ArcozziRochberg}}.  Since the inverse of a quasisymmetric map is also a quasisymmetry (and in particular 
 a homeomorphism), we see that a Borel set $K$ has logarithmic capacity zero if and only if $\phi(K)$  is a Borel set of logarithmic capacity zero. 
 
 Now let $\phi:A \rightarrow \mathbb{A}$ and $\psi:B \rightarrow \mathbb{B}$ be collar charts
  such that $A$ and $B$ are in $\riem$.   By 
 composing with a scaling and translation we can obtain maps $\tilde{\phi}$ and $\tilde{\psi}$ such that the image of $\Gamma$ under both 
 $\tilde{\phi}$ and $\tilde{\psi}$ is $\mathbb{S}^1$; we can also arrange that $\mathbb{S}^1$ is the outer boundary of both $\mathbb{A}$ and $\mathbb{B}$ by composing with $1/z$ if necessary.   By Lemma \ref{le:continuous_extension_collar}, $\tilde{\phi}\circ \tilde{\psi}^{-1}$  has a homeomorphic extension to $\mathbb{S}^1$.  By Schwarz reflection principle, it has an extension to a conformal map of an open neighbourhood of $\mathbb{S}^1$, so it is an analytic diffeomorphism of $\mathbb{S}^1$ and in particular a 
 quasisymmetry. Thus $\tilde{\psi}(I)$ has logarithmic capacity zero if and only if $\tilde{\phi}(I)$ has capacity zero.  Since linear maps $z \mapsto az  + b$ take  
 Borel sets of capacity zero to Borel sets of capacity zero, as does $z \mapsto 1/z$, we have that $\phi(I)$ has logarithmic 
 capacity zero if and only if $\psi(I)$ does.  This completes the proof.
\end{proof}

Thus, we now see that we may say that ``$I$ is null with respect to $\riem$'' without ambiguity.

This result can be improved for quasicircles to allow the possibility that the collar charts are in different 
components.
\begin{theorem} \label{th:quasicircle_null_eitherside}
 Let $\Gamma$ be a quasicircle in a compact Riemann surface $R$, separating $R$ into two
 connected components $\riem_1$ and $\riem_2$.  Let $I \subset \Gamma$ be Borel.  Then $I$ is null
 with respect to $\riem_1$ if and only if $I$ is null with respect to $\riem_2$.
\end{theorem}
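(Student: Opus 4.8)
The plan is to transfer the problem to the plane via a doubly-connected chart realizing the quasicircle, and then to reduce it to the invariance of zero logarithmic capacity under quasisymmetric maps, which was already exploited in the proof of Theorem \ref{th:null_sets_both_sides}.

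\emph{Reduction to the plane.} By the definition of quasicircle there is a doubly-connected chart $\phi:A\to\mathbb{A}$ for $\Gamma$ with $\mathbb{A}$ an annulus and $\gamma_0:=\phi(\Gamma)$ a quasicircle in $\mathbb{C}$. Since $A$ is a doubly-connected neighbourhood of $\Gamma$ and $\Gamma$ separates $R$, the curve $\Gamma$ divides $A$ into two collar neighbourhoods $A_1\subset\riem_1$ and $A_2\subset\riem_2$, and $\phi(A_1),\phi(A_2)$ are the two components of $\mathbb{A}\setminus\gamma_0$; after interchanging the labels $\riem_1,\riem_2$ if necessary (the assertion is symmetric in them) we may assume $\phi(A_1)$ lies in the bounded component $\Omega_+$ of $\mathbb{C}\setminus\gamma_0$ and $\phi(A_2)$ in the unbounded component $\Omega_-$. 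For $j=1,2$, compose $\phi|_{A_j}$ with a conformal map $\psi_j$ of the doubly-connected domain $\phi(A_j)$ (which is bounded by the Jordan curve $\gamma_0$ and a round circle) onto a round annulus, normalized so that $\gamma_0$ goes to $\mathbb{S}^1$; then $\psi_j\circ\phi|_{A_j}$ is a collar chart for $\Gamma$ with domain in $\riem_j$, and by the planar boundary extension result used in the proof of Lemma \ref{le:continuous_extension_collar} the map $\psi_j$ extends continuously to a homeomorphism $\gamma_0\to\mathbb{S}^1$. Setting $I_0:=\phi(I)\subset\gamma_0$, Theorem \ref{th:null_sets_both_sides} then says that $I$ is null with respect to $\riem_j$ if and only if $\psi_j(I_0)\subset\mathbb{S}^1$ has logarithmic capacity zero. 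Since $\psi_2(I_0)=\beta(\psi_1(I_0))$ with $\beta:=\psi_2|_{\gamma_0}\circ(\psi_1|_{\gamma_0})^{-1}$ a homeomorphism of $\mathbb{S}^1$, and since quasisymmetries preserve closed sets of logarithmic capacity zero, it remains only to show that $\beta$ is a quasisymmetry.

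\emph{The transition map is quasisymmetric.} This is the one point where the quasicircle hypothesis is used essentially: on each side of $\Gamma$ separately the relevant boundary map is real-analytic, but matching the two sides requires the conformal welding of $\gamma_0$ to be quasisymmetric. Let $g_+:\disk\to\Omega_+$ and $g_-:\sphere\setminus\cdisk\to\Omega_-$ be Riemann maps; by Carathéodory's theorem they extend to homeomorphisms of the closures, and because $\gamma_0$ is a quasicircle the welding map $g_-^{-1}\circ g_+:\mathbb{S}^1\to\mathbb{S}^1$ is a quasisymmetry. On the other hand $\psi_1\circ g_+$ is conformal on the collar $g_+^{-1}(\phi(A_1))$ of $\mathbb{S}^1$ inside $\disk$, continuous up to $\mathbb{S}^1$, and carries $\mathbb{S}^1$ onto $\mathbb{S}^1$; by the Schwarz reflection principle it extends holomorphically across $\mathbb{S}^1$, so $u_1:=(\psi_1\circ g_+)|_{\mathbb{S}^1}$ is a real-analytic diffeomorphism of $\mathbb{S}^1$, in particular a quasisymmetry, and likewise $u_2:=(\psi_2\circ g_-)|_{\mathbb{S}^1}$. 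Since $\psi_j|_{\gamma_0}=u_j\circ(g_\pm|_{\mathbb{S}^1})^{-1}$, we obtain
\[ \beta=u_2\circ\bigl(g_-^{-1}\circ g_+\bigr)|_{\mathbb{S}^1}\circ u_1^{-1}, \]
a composition of quasisymmetries, and hence itself a quasisymmetry. This completes the argument.

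The step I expect to require the most care is the reduction above: one must check that $\phi(A_j)$ is a genuine doubly-connected domain having $\gamma_0$ among its boundary curves, so that $\psi_j\circ\phi|_{A_j}$ is a collar chart eligible for Theorem \ref{th:null_sets_both_sides}, and one must cite correctly that the conformal welding of a planar quasicircle is quasisymmetric. An alternative for the latter avoids the Riemann maps: writing $\gamma_0=F(\mathbb{S}^1)$ with $F$ a quasiconformal map of $\sphere$ satisfying $F(\disk)=\Omega_+$, each $F^{-1}\circ\psi_j^{-1}$ is quasiconformal on a one-sided collar of $\mathbb{S}^1$ and, after reflection in $\mathbb{S}^1$ (which is removable for quasiconformal maps), extends quasiconformally to a two-sided neighbourhood of $\mathbb{S}^1$ fixing $\mathbb{S}^1$ setwise, so its restriction to $\mathbb{S}^1$ is quasisymmetric, whence so is $\beta$.
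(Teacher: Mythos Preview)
Your argument is correct and follows the same overall strategy as the paper: push everything to the plane via a doubly-connected chart, produce collar charts on each side taking $\Gamma$ to $\mathbb{S}^1$, and show that the induced transition $\mathbb{S}^1\to\mathbb{S}^1$ is quasisymmetric so that zero logarithmic capacity is preserved.

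The difference lies in how the quasisymmetry of the transition is established. The paper extends each map $\phi\circ g^{-1}$ (resp.\ $\psi\circ g^{-1}$) to a quasiconformal map of the full domain $\Omega^+$ (resp.\ $\Omega^-$) onto $\disk$ by filling in the complementary cap, and then invokes the \emph{quasiconformal reflection} $r$ fixing $g(\Gamma)$ pointwise: the composition $\psi\circ g^{-1}\circ r\circ(\phi\circ g^{-1})^{-1}$ is then an orientation-reversing quasiconformal self-map of $\disk$, hence has quasisymmetric boundary values equal to $\psi\circ\phi^{-1}$ on $\mathbb{S}^1$. You instead invoke the \emph{conformal welding} characterization of quasicircles: with Riemann maps $g_\pm$ onto the two complementary domains of $\gamma_0$, the welding $g_-^{-1}\circ g_+$ is quasisymmetric, while $\psi_j\circ g_\pm$ extend by Schwarz reflection across $\mathbb{S}^1$ and thus restrict to real-analytic diffeomorphisms of $\mathbb{S}^1$; the transition $\beta$ factors through these three maps. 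Both routes are standard; yours avoids the explicit cap-filling construction, while the paper's avoids quoting the welding theorem and stays closer to the raw definition of quasicircle used in the paper. Your alternative at the end (using a global quasiconformal map $F$ with $F(\mathbb{S}^1)=\gamma_0$ and reflection through $\mathbb{S}^1$) is in fact rather close in spirit to the paper's reflection argument.
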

\begin{proof}  Let $g:U \rightarrow V$ be a doubly-connected chart in a neighbourhood of $\Gamma$. 
 By shrinking $U$ if necessary, we can assume that $U$ is bounded by analytic curves $\gamma_1$ 
 and $\gamma_2$ in $\riem_1$ and $\riem_2$ respectively, and 
 that $g$ has a conformal extension to an open set containing the closure of $U$ so that $g(\gamma_1)$
 and $g(\gamma_2)$ are analytic curves in $\mathbb{C}$.  
 Let $\phi:A \rightarrow \mathbb{A}$ be a collar chart in a neighbourhood of $\Gamma$ in $\riem_1$
 and $\psi:B \rightarrow \mathbb{B}$ be a collar chart in a neighbourhood of $\Gamma$ in $\riem_2$.
 For definiteness, we arrange that the outer boundary of both annuli $\mathbb{A}$ and $\mathbb{B}$
 is $\mathbb{S}^1$, and that $\phi$ and $\psi$ both take $\Gamma$ to $\mathbb{S}^1$.  This can 
 be done by composing with an affine transformation and $z \mapsto 1/z$ if necessary.   Let $\Omega^+$
 denote the bounded component of the complement of $g(\Gamma)$ in $\sphere$ and $\Omega^-$ denote
 the unbounded component.  We assume that $g$ takes $U \cap \riem_1$ into $\Omega^+$, again by 
 composing with $z \mapsto 1/z$ if necessary.  Finally, by possibly shrinking the domain of $g$ again,
 we can assume that the analytic curve $\gamma_1$ is contained in the domain of $\phi$.  
 
 Thus, $\phi \circ g^{-1}$ is a conformal map of a collar neighbourhood $W$ of $g(\Gamma)$ in $\Omega^+$
 onto a collar neighbourhood of 
 $\mathbb{S}^1$ in $\disk$, whose inner boundary $\phi(\gamma_1)$ is an analytic curve.
 By the previous paragraph it has a conformal extension to an open neighbourhood of $g(\gamma_1)$, and thus 
 the restriction of $\phi \circ g^{-1}$ is an analytic diffeomorphism from $g(\gamma_1)$ to $\phi(\gamma_1)$.
 Thus if we let $W'$ be the Borel set in $\Omega^+$ bounded by $g(\gamma_1)$, there is a quasiconformal 
 map $F$ of $W'$ with a homeomorphic extension to $g(\gamma_1)$ equalling $\psi \circ g^{-1}$.   The 
 map 
 \[  \Phi(z) = \left\{ \begin{array}{cc} F(z) & z \in W' \\ \phi \circ g^{-1}(z) & z \in W \cup 
     g(\gamma_1)   \end{array} \right.  \]
 is therefore a quasiconformal map from $\Omega^+$ to $\disk$.   A similar argument 
 shows that
 $\psi \circ g^{-1}$ has a quasiconformal extension to a  map from $\Omega^-$ to $\disk$.  
 
 Since $g(\Gamma)$ is a quasicircle, there is a quasiconformal reflection $r$ of the plane which fixes each point in $g(\Gamma)$.  Thus $\psi \circ g^{-1} \circ r \circ (\phi \circ g^{-1})^{-1}$ has an extension to an (orientation reversing) quasiconformal self-map of the disk.  Thus it extends continuously to a quasisymmetry of $\mathbb{S}^1$, which takes Borel sets of capacity zero to Borel sets of capacity zero. Furthermore, on $\mathbb{S}^1$, this map equals $\psi \circ \phi^{-1}$.  Since the same argument applies to $\phi \circ \psi^{-1}$, 
 we have shown that $\phi(I)$ has capacity zero in $\mathbb{S}^1$ if and only if $\psi(I)$ has capacity zero
 in $\mathbb{S}^1$. This completes the proof.  
\end{proof}

 Let $\Gamma$ be a strip-cutting Jordan curve in $R$.  We say that a property holds quasi-everywhere on $\Gamma$ with respect to $\riem$ if it holds except on a null set with respect to $\riem$.
 Thus we define the following class of functions.  
 \[  \mathcal{B}(\Gamma;\riem) = \{ h:\Gamma \rightarrow \mathbb{C}   \}/\sim  \]
 where $h_1 \sim h_2$ if $h_1 = h_2$ quasi-everywhere with respect to $\riem$.  By Theorem \ref{th:quasicircle_null_eitherside}, if 
 $\Gamma$ is a quasicircle separating $R$ into connected components $\riem_1$ and 
 $\riem_2$, then 
 $\mathcal{B}(\Gamma,\riem_1) = \mathcal{B}(\Gamma,\riem_2)$.  Thus we will use the 
 notation $\mathcal{B}(\Gamma)$ in this case.  
\end{subsection}
\begin{subsection}{The class $\mathcal{H}(\Gamma)$ for simply connected domains}

 We will need a notion of non-tangential limit for points on the boundary of a Riemann surface $\riem$ contained in a compact surface $R$, when the boundary of $\riem$ is a strip-cutting Jordan curve.  This notion is designed to be conformally invariant, and thus not depend on the regularity of the boundary.  Indeed, it could be phrased in terms of the ideal boundary, but we will not require this or pursue it beyond making a few remarks.
 
 We begin with simply-connected domains.  
 A non-tangential wedge in $\disk$ with vertex at $p \in \mathbb{S}^1$ is a set of the form 
 \[ W(p,M)  = \{ z \in \disk : |p-z| < M(1-|z|)   \} \]
 for some $M \in (1,\infty)$.  
 As usual, we say that a function $f:\disk \rightarrow \mathbb{C}$ has a non-tangential limit at $p$
 if the limit of $\left. f \right|_{W(p,M)}$ as $z \rightarrow p$ exists for all $M \in (1,\infty)$.  
 One may of course equivalently use Stolz angles, that is sets of the form 
 \[ S(p, \alpha)= \{z: \text{arg}(1-\bar{p} z) < \alpha, |z - p| < \rho  \}   \]
 where $\alpha \in (0,\pi/2)$ and $\rho < \cos{\alpha}$ \cite[p6]{Pommerenke_boundary_behaviour}.  It is easily seen that if $T:\disk \rightarrow \disk$ is a disk automorphism, then $f$ has a non-tangential limit at $p$ if and only if $f \circ T$ has a non-tangential limit at $T(p)$.    
 
 \begin{definition} Let $\Omega \subset R$ be a simply-connected domain in a compact Riemann surface which is bordered by a 
 strip-cutting Jordan curve.  
 We say that a function $h:\Omega \rightarrow \mathbb{C}$ has a conformally non-tangential limit at $p \in \partial \Omega$ if and only if $h \circ f$ has non-tangential limit at $f^{-1}(p)$ for a conformal map $f:\disk \rightarrow \Omega$.  
 \end{definition}
 Observe that by Carath\'eodory's theorem $f$ has a homeomorphic extension taking $\mathbb{S}^1$ to $\partial \Omega$, so $f^{-1}(p)$ is well-defined.  Note also that if $f$ and $g$ are conformal maps from $\disk$ to $\Omega$, then $h \circ f$ has a non-tangential limit at $f^{-1}(p)$ if and only if $h \circ g$ has a non-tangential limit at $g^{-1}(p)$, since $g^{-1} \circ f$ is a disk automorphism.  This shows that 
 the notion of conformally non-tangential limit is well-defined.

 We have the following result of Beurling/Zygmund, phrased conformally invariantly.  
 \begin{theorem}[\cite{El-Fallah_etal_primer}]  \label{th:conf_inv_Beurling} Let $\Gamma$ be a strip-cutting Jordan curve in a 
 compact Riemann surface $R$ and let $\Omega$ be a component of the
 complement.  Assume that $\Omega$ is simply connected.
  Then for every $H \in \mathcal{D}_{\mathrm{harm}}(\Omega)$, $H$ has a conformally non-tangential limit at $p$ for all $p$ except possibly on a null set in $\Gamma$ with respect to $\Omega$.  If  $H_1,H_2 \in \mathcal{D}_{\mathrm{harm}}(\Omega)$ have the same conformally non-tangential boundary values except possibly on a null set in $\Gamma$, then $H_1= H_2$. 
 \end{theorem}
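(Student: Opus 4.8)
The plan is to reduce everything to the classical Beurling--Zygmund theorem on $\disk$ by pulling $H$ back along a Riemann map, and then to transport the exceptional set back to $\Gamma$ by recognizing a restriction of the Riemann map as a collar chart, at which point Theorem~\ref{th:null_sets_both_sides} identifies ``logarithmic capacity zero on $\mathbb{S}^1$'' with ``null with respect to $\Omega$''. Concretely, I would first fix a conformal map $f:\disk\to\Omega$ (as in the definition of conformally non-tangential limit; it exists because $\Omega$ is a simply connected bordered Riemann surface with Jordan border $\Gamma$, and by Carath\'eodory's theorem it extends to a homeomorphism $\cdisk\to\Omega\cup\Gamma$ sending $\mathbb{S}^1$ onto $\Gamma$). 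Since in two real dimensions the Dirichlet energy depends only on the conformal structure, $H\circ f$ is harmonic and $\langle d(H\circ f),d(H\circ f)\rangle_{\disk}=\langle dH,dH\rangle_{\Omega}<\infty$, so $H\circ f\in\mathcal{D}_{\mathrm{harm}}(\disk)$; and by definition the conformally non-tangential limit of $H$ at $p\in\Gamma$ is the non-tangential limit of $H\circ f$ at $f^{-1}(p)$.

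Next I would establish the dictionary between the two notions of smallness. For any $r\in(0,1)$, the set $A_r:=f(\{r<|z|<1\})$ is a collar neighbourhood of $\Gamma$ in $\Omega$: it is open and connected, bordered by $\Gamma=f(\mathbb{S}^1)$ and by the analytic Jordan curve $f(\{|z|=r\})$, which is disjoint from $\Gamma$ and homotopic to it (via the image under $f$ of the family $\{|z|=r+s(1-r)\}$, $s\in[0,1]$). Thus $\phi_0:=f^{-1}|_{A_r}\colon A_r\to\{r<|z|<1\}$ is a collar chart for $\Gamma$, and by Lemma~\ref{le:continuous_extension_collar} its continuous extension to $\Gamma$ must coincide with the Carath\'eodory extension $f^{-1}|_\Gamma$. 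Hence for any $S\subseteq\Gamma$ we have $\phi_0(S)=f^{-1}(S)\subseteq\mathbb{S}^1$, so by criterion~(3) of Theorem~\ref{th:null_sets_both_sides}, $S$ is null with respect to $\Omega$ if and only if $f^{-1}(S)$ has logarithmic capacity zero.

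With this in place both assertions follow from the disk case: by \cite{El-Fallah_etal_primer}, every element of $\mathcal{D}_{\mathrm{harm}}(\disk)$ has a non-tangential limit at each point of $\mathbb{S}^1$ outside a set of logarithmic capacity zero, and an element of $\mathcal{D}_{\mathrm{harm}}(\disk)$ whose non-tangential limit vanishes outside such a set is identically zero. Applying the first statement to $H\circ f$ and letting $E\subseteq\mathbb{S}^1$ be its exceptional set, $H$ has a conformally non-tangential limit at every $p\in\Gamma\setminus f(E)$, and since $f^{-1}(f(E))=E$ has capacity zero, the dictionary shows $f(E)$ is null with respect to $\Omega$. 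For uniqueness, if $H_1,H_2\in\mathcal{D}_{\mathrm{harm}}(\Omega)$ agree conformally non-tangentially off a null set $N$, then $(H_1-H_2)\circ f\in\mathcal{D}_{\mathrm{harm}}(\disk)$ has vanishing non-tangential limit off $f^{-1}(N)$, which has capacity zero by the dictionary; hence $(H_1-H_2)\circ f\equiv 0$ and $H_1=H_2$.

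The main obstacle is conceptual rather than technical: it is the observation that a restriction of the Riemann map is a collar chart whose boundary trace is the Carath\'eodory homeomorphism, which is precisely what lets Theorem~\ref{th:null_sets_both_sides} convert the disk-boundary statement into the surface-boundary statement. A minor point to address is that the exceptional set $E$ produced by Beurling's theorem need not be closed, whereas the definition of ``null set'' is phrased for closed sets; this is inessential, since logarithmic capacity (and its invariance under quasisymmetric or conformal maps, as used in Theorem~\ref{th:null_sets_both_sides} via \cite{ArcozziRochberg}) makes sense for arbitrary Borel sets, and only the capacity-zero condition is actually needed.
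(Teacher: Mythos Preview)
Your proof is correct and follows the same approach as the paper, which simply states that the theorem ``follows immediately from the statement on the disk, together with conformal invariance of the definition.'' You have usefully made explicit what the paper leaves implicit---namely, that a restriction of the inverse Riemann map is a collar chart, so Theorem~\ref{th:null_sets_both_sides} identifies capacity-zero subsets of $\mathbb{S}^1$ with null sets on $\Gamma$---and your remark about the closed-set hypothesis in the definition of null sets is a fair observation.
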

 This theorem follows immediately for the statement on the disk, together with conformal invariance
 of the definition.  
 We will shorten this statement to say that the boundary values exist ``conformally non-tangentially quasieverywhere'' or ``conformally non-tangentially'' with respect to $\Omega$.\\
 
From now on we will use the abbreviation``CNT'' for conformally non-tangentially.

 Let $\Gamma$ be an oriented Jordan curve in $\mathbb{C}$.  Let $\Omega_i$, $i=1,2$ denote the distinct connected components of the complement of $\Gamma$ in $\sphere$.  Temporarily let $\mathcal{H}(\Gamma,\Omega_i)$ denote the set of functions $h \in \mathcal{B}(\Gamma,\Omega_i)$ which are CNT limits of some $H \in \mathcal{D}(\Omega_i)$ quasi-everywhere.

 If $\mathcal{H}(\Gamma,\Omega_1)= \mathcal{H}(\Gamma,\Omega_2)$ then there would be induced maps of the
 Dirichlet spaces
 \[  r(\Omega_1,\Omega_2):\mathcal{D}_{\text{harm}}(\Omega_1) \rightarrow \mathcal{D}_{\text{harm}}(\Omega_2) \ \ \text{and} \ \ r(\Omega_2,\Omega_1):\mathcal{D}_{\text{harm}}(\Omega_2) \rightarrow \mathcal{D}_{\text{harm}}(\Omega_1). \]
If $\Gamma$ is a quasicircle, this is indeed the case, and in fact this characterizes quasicircles.
 \begin{theorem}[\cite{SchippersStaubach_jump}] \label{th:qc_both_sides_same_H}  Let $\Gamma$ be a Jordan curve in $\mathbb{C}$.    The following are equivalent.
 \begin{enumerate}
  \item $\Gamma$ is a quasicircle;
  \item $\mathcal{H}(\Gamma,\Omega_1) \subseteq \mathcal{H}(\Gamma,\Omega_2)$ and the map $r(\Omega_1,\Omega_2)$ is bounded
  with respect to the Dirichlet semi-norm;
  \item  $\mathcal{H}(\Gamma,\Omega_2) \subseteq \mathcal{H}(\Gamma,\Omega_1)$ and the map $r(\Omega_2,\Omega_1)$ is bounded
  with respect to the Dirichlet semi-norm.
 \end{enumerate}
 \end{theorem}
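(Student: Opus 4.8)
The plan is to conjugate the whole problem to the unit disk by Riemann mappings, where it becomes a statement about composition operators on $H^{1/2}(\mathbb{S}^1)$, and then to quote the two classical facts that make it run: the conformal welding characterization of quasicircles, and the characterization of quasisymmetric circle maps by boundedness of the induced action on $H^{1/2}$.

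Since $\Gamma$ is a Jordan curve in $\mathbb{C}$, one component $\Omega_1$ of $\sphere \setminus \Gamma$ is bounded and the other $\Omega_2$ contains $\infty$; both are simply connected, so I would fix conformal maps $f_1:\disk \to \Omega_1$ and $f_2:\disk \to \Omega_2$. By Carath\'eodory's theorem each $f_i$ extends to a homeomorphism of $\cdisk$ onto $\overline{\Omega_i}$, so $\tau := f_1^{-1}\circ f_2 : \mathbb{S}^1 \to \mathbb{S}^1$ is a well-defined (orientation-reversing) homeomorphism, the \emph{conformal welding} of $\Gamma$. Because the Dirichlet semi-norm and the notion of CNT limit are both conformally invariant by construction, composition with $f_i$ identifies $\mathcal{D}_{\mathrm{harm}}(\Omega_i)$ with $\mathcal{D}_{\mathrm{harm}}(\disk)$ isometrically modulo constants and carries CNT boundary values on $\Gamma$ to ordinary non-tangential boundary values on $\mathbb{S}^1$. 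Using Theorem \ref{th:conf_inv_Beurling} for the disk together with the Douglas formula, the boundary trace identifies $\mathcal{D}_{\mathrm{harm}}(\disk)$ modulo constants with $H^{1/2}(\mathbb{S}^1)$ modulo constants; and under this identification the boundary function of $r(\Omega_1,\Omega_2)H_1$ is $u\circ\tau$, where $u$ is the boundary function of $H_1$. Thus condition (2) says precisely that the composition operator $C_\tau\colon u\mapsto u\circ\tau$ maps $H^{1/2}(\mathbb{S}^1)/\mathrm{const}$ into itself and is bounded for the $H^{1/2}$ semi-norm.

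With this translation in hand I would invoke: (a) the conformal welding theorem and its converse, that $\Gamma$ is a quasicircle if and only if $\tau$ is quasisymmetric; and (b) the theorem of Nag--Sullivan that a homeomorphism of $\mathbb{S}^1$ induces a bounded operator on $H^{1/2}(\mathbb{S}^1)$ if and only if it is quasisymmetric, together with the complementary remark (closed graph theorem) that $C_\tau$ is automatically bounded as soon as it maps $H^{1/2}$ into itself. Chaining (a), (b) and the translation gives (1) $\Leftrightarrow$ (2); and since interchanging $\Omega_1$ and $\Omega_2$ replaces $\tau$ by $\tau^{-1}$ while quasisymmetry is inverse-stable, the same argument gives (1) $\Leftrightarrow$ (3). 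One genuinely delicate point inside the translation is that, given only the inclusion and boundedness, one must still check that $u\circ\tau$ — a priori defined only off a $\tau$-image of a capacity-zero set — is honestly the CNT boundary function of its Poisson extension; this uses that $\tau$ and $\tau^{-1}$ preserve sets of capacity zero (equivalently Theorem \ref{th:quasicircle_null_eitherside}), which is exactly the feature that fails for a non-quasicircle, and is why the two sides do not a priori share a notion of negligible set.

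The step I expect to be the real obstacle is the hard half of (b): showing that quasisymmetry of $\tau$ forces $C_\tau$ to be bounded on $H^{1/2}(\mathbb{S}^1)$. The natural route is through the Douglas integral $\iint_{\mathbb{S}^1\times\mathbb{S}^1}\frac{|u(\zeta)-u(\eta)|^2}{|\zeta-\eta|^2}\,|d\zeta|\,|d\eta|$ and Beurling--Ahlfors type distortion estimates for quasisymmetric maps, but making the change-of-variables bound uniform in the quasisymmetry constant — and keeping track that one is controlling the Dirichlet semi-norm (the quotient by constants) rather than a genuine norm — requires care. Everything else (Carath\'eodory extension, conformal invariance of Dirichlet energy and of CNT limits, and the symmetry argument for (3)) is routine.
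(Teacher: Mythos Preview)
This theorem is not proved in the present paper; it is quoted from \cite{SchippersStaubach_jump}, and the only in-text support is the remark immediately following the statement, which reconciles the CNT limits used here with the radial limits used in that reference. So there is no proof in this paper against which to compare your proposal.

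That said, your outline is the natural and correct route, and it is essentially the argument of the cited reference: pull back to the disk via Riemann maps $f_1,f_2$, identify the transmission with the composition operator $C_\tau$ on $H^{1/2}(\mathbb{S}^1)/\mathbb{C}$ for the welding homeomorphism $\tau=f_1^{-1}\circ f_2$, and then invoke the two classical ingredients you name---the quasicircle/quasisymmetric-welding equivalence, and the Nag--Sullivan characterization of quasisymmetries as exactly those circle homeomorphisms for which $C_\tau$ is bounded on $H^{1/2}$. The subtlety you flag about matching the two one-sided notions of null set in the direction $(2)\Rightarrow(1)$ is real but not an obstruction: once pulled back by $f_2$, the hypothesis already hands you a $v\in H^{1/2}(\mathbb{S}^1)$ with $v=u\circ\tau$ off a capacity-zero subset of $\mathbb{S}^1$, and that is enough input for the Nag--Sullivan converse without any circular appeal to Theorem~\ref{th:quasicircle_null_eitherside}.
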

 Thus, for quasicircles in $\sphere$, we will denote $\mathcal{H}(\Gamma)=\mathcal{H}(\Gamma,\Omega_1)=\mathcal{H}(\Gamma,\Omega_2)$
 without ambiguity.
 
 In contrast to Theorem \ref{th:conf_inv_Beurling},  Theorem \ref{th:qc_both_sides_same_H} is non-trivial and not a tautological consequence of Beurling's theorem.  
 \begin{remark}  In \cite{SchippersStaubach_jump}, the boundary values are given as limits along images of radial lines under a conformal map $f:\disk \rightarrow \Omega$.  Since all elements of $\mathcal{D}_{\mathrm{harm}}(\Omega)$ have CNT limits quasieverywhere, and the limits along these curves must equal the conformally non-tangential limit when it exists, the theorem above is an immediate consequence.  
 \end{remark}

\end{subsection}
\begin{subsection}{Conformally non-tangential limits}

We now give the general definition of CNT limits.  We do this by replacing the map $f$ with the canonical collar chart.  However, it is not immediately obvious that the definition is independent of the choice of singularity in Green's function, so we temporarily make the dependence on the singularity explicit.  
\begin{definition}  Let $R$ be a compact surface and $\Gamma$ be a strip-cutting Jordan 
 curve $\Gamma$ which separates $R$ into two components, one of which is $\riem$. 
 Fix $q \in \riem$ and let $\phi$ be the canonical collar chart induced by Green's function with singularity 
 at $q$.  
 \begin{enumerate}
  \item
 For a function $H:\riem \rightarrow \mathbb{C}$, we say that $H$ has a CNT limit at $p \in \Gamma$ with respect to $(\riem,q)$ if $H \circ \phi^{-1}$ has a non-tangential limit at $\phi(p)$.  
 \item We say that $H$ has CNT boundary values quasieverywhere 
 with respect to $(\riem,q)$ if $H$ has CNT limits at all $p \in \Gamma$ except 
 possibly on a null set with respect to $\riem$.  
 \end{enumerate}
\end{definition}
We now show that the definition is independent of the singularity.
 \begin{theorem}  \label{th:CNT_point_independence}
  Let $\Gamma$ be a strip-cutting Jordan curve in a compact surface $R$ which separates $R$ into two components, one of which is $\riem$.  Fix points $q_1,q_2 \in \riem$.  Then a function $H:\riem \rightarrow \mathbb{C}$ has a $\mathrm{CNT}$ limit at $p \in \Gamma$ with respect to $(\riem,q_1)$ if and only if $H$ has a $\mathrm{CNT}$ limit with respect to $(\riem,q_2)$.    
 \end{theorem}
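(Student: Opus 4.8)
The plan is to reduce the statement to a property of the transition map between the two canonical collar charts $\phi_1,\phi_2$ (induced by Green's function with singularity at $q_1$, resp. $q_2$), and to show that this transition map extends conformally across the circle, hence preserves non-tangential approach regions.

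First I would note that a CNT limit at $p$ depends only on the germ of $H$ near $p$, and that the canonical collar chart with respect to $(\riem,q)$ is determined up to a rotation of its target and a shrinking of its domain; since rotations of $\disk$ are disk automorphisms and shrinking merely restricts the domain, the property ``$H$ has a CNT limit at $p$ with respect to $(\riem,q)$'' is unaffected by these ambiguities, so I am free to shrink the charts. For $i=1,2$ let $\phi_i$ be the canonical collar chart with respect to $(\riem,q_i)$, defined on a collar neighbourhood $A_i$ of $\Gamma$, with image a round annulus whose outer boundary is $\mathbb{S}^1$, and extending by Lemma~\ref{le:continuous_extension_collar} to a homeomorphism of $\Gamma$ onto $\mathbb{S}^1$. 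Next I would produce a common collar: since $\Gamma$ separates $R$, the set $\overline{\riem}=\riem\cup\Gamma$ is compact, and $g_{q_1}$ is positive and continuous on $\overline{\riem}\setminus\{q_1\}$ and vanishes on $\Gamma$, so the collar cut out by $\Gamma$ and a level curve $\{g_{q_1}=\delta\}$ shrinks to $\Gamma$ as $\delta\to 0$ and therefore lies in the neighbourhood $A_1\cap A_2$ of $\Gamma$ once $\delta$ is small. After this shrinking I may assume $\phi_1$ and $\phi_2$ are defined on the same collar $A$.

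Then I would set $T=\phi_2\circ\phi_1^{-1}$, a biholomorphism between two round annuli both having $\mathbb{S}^1$ as outer boundary, which extends continuously to $\mathbb{S}^1$ (Lemma~\ref{le:continuous_extension_collar}) and there restricts to a homeomorphism of $\mathbb{S}^1$ onto itself, namely $\phi_2|_\Gamma\circ(\phi_1|_\Gamma)^{-1}$. By the Schwarz reflection principle $T$ extends to a biholomorphism of a two-sided neighbourhood $N$ of $\mathbb{S}^1$, with $T(N\cap\disk)\subset\disk$ and $T'(\zeta)\neq 0$ for all $\zeta\in\mathbb{S}^1$. Fixing $p\in\Gamma$ and writing $\zeta_1=\phi_1(p)$, $\zeta_2=\phi_2(p)=T(\zeta_1)$, the conclusion follows once I know that $T$ carries non-tangential wedges at $\zeta_1$ into non-tangential wedges at $\zeta_2$ and conversely, since then $H\circ\phi_1^{-1}$ has a non-tangential limit at $\zeta_1$ if and only if $H\circ\phi_2^{-1}=(H\circ\phi_1^{-1})\circ T^{-1}$ has one at $\zeta_2$. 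For the wedge claim I would use the two comparisons $|T(z)-\zeta_2|\asymp|z-\zeta_1|$ (from $T'(\zeta_1)\neq 0$) and $1-|T(z)|\asymp 1-|z|$ for $z\in\disk$ near $\zeta_1$, the latter because $T$ extends conformally across $\mathbb{S}^1$ as a local diffeomorphism with $|T'|$ bounded above and below on the compact set $\mathbb{S}^1$, so both $T$ and $T^{-1}$ distort the Euclidean distance to $\mathbb{S}^1$ by bounded factors.

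The hard part is this last step: verifying that a conformal map across the circle preserves the existence of non-tangential limits. Within it the decisive point is the comparison $1-|T(z)|\asymp 1-|z|$, which says precisely that $T$ does not distort the rate of approach to the boundary, and which I would deduce from the Schwarz reflection extension together with the compactness of $\mathbb{S}^1$. Everything else — reducing to a common collar via compactness of $\overline{\riem}$, the reflection step, and the final composition argument — is routine.
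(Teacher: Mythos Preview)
Your proposal is correct and follows essentially the same route as the paper: form the transition map $T=\phi_2\circ\phi_1^{-1}$ between the two canonical collar charts, extend it conformally across $\mathbb{S}^1$ by Schwarz reflection (using Lemma~\ref{le:continuous_extension_collar} for the continuous boundary extension), and then use conformality at the boundary point to see that Stolz angles/wedges are carried into possibly larger ones. The paper is terser about the wedge-preservation step, simply asserting that conformality of the extended map at $\phi_i(p)$ forces a small piece of a Stolz angle into a larger Stolz angle, whereas you spell this out via the two comparisons $|T(z)-\zeta_2|\asymp|z-\zeta_1|$ and $1-|T(z)|\asymp 1-|z|$; but the underlying idea is identical.
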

 \begin{proof}
  Let $\phi_1$ and $\phi_2$ be collar charts induced by $q_1$ and $q_2$ respectively.  Then 
  $\phi_2 \circ \phi_1^{-1}: A_1 \rightarrow A_2$ is a conformal map between some collar neighbourhoods $A_1$ and $A_2$ of 
  $\mathbb{S}^1$ in $\disk$.  By Lemma \ref{le:continuous_extension_collar} and Schwarz reflection \cite{Pommerenke_boundary_behaviour}, $\phi_2 \circ \phi_1^{-1}$ has a conformal extension to a map $g:A_1^* \rightarrow A_2^*$ where $A_1^*$ and $A_2^*$ are open neighbourhoods of $\mathbb{S}^1$ obtained by adjoining $\mathbb{S}^1$ and the symmetric regions under $z \mapsto 1/\bar{z}$.  
  
  Fix any Stolz angle $W$ at $p$, and denote by $B(p,\epsilon)$ the open disk of radius $\epsilon$ at $p$. Since $g$ is conformal at $\phi_2(p)$, for $\epsilon$ sufficiently small $g(B(p,\epsilon) \cap W)$ is contained in a possibly larger Stolz angle at $\phi_1(p)$, and a similar statement holds for $g^{-1}$.
  This completes the proof.
 \end{proof}
 Henceforth, we will say that $H$ has a CNT limit with respect to $\riem$.  
 
 \begin{remark}[conformal invariance of CNT limit]  \label{re:CNT_definition_confinv} It follows immediately from 
 conformal invariance of Green's function, that existence of CNT limits is conformally invariant in the following sense.  If $\riem' \subset R'$ and $\riem \subset R$ are bounded by strip-cutting Jordan curves, for any conformal map $f:\riem' \rightarrow \riem$ we have that $H:\riem \rightarrow \mathbb{C}$ has CNT boundary values at $p$ with respect to $(\riem,q)$ if and only if $H \circ f$
has CNT boundary values at $f^{-1}(p)$ with respect to $(\riem',f^{-1}(q))$.
By Theorem \ref{th:CNT_point_independence}, we therefore have that $H$ has a CNT limit with respect to $\riem$ at $p$ if and only if $H \circ f$ has a CNT limit at $f^{-1}(p)$ with respect to $\riem'$.  
\end{remark}

Finally, we observe that the proof of Theorem \ref{th:CNT_point_independence} does not depend on the fact that the collar charts are canonical.  Thus, we have the following.
\begin{theorem} \label{th:CNT_any_collar_chart}  Let $\Gamma$ be a strip-cutting Jordan curve in a compact surface $R$ which separates $R$ into two components, one of which is $\riem$. Let $H:\riem \rightarrow \mathbb{C}$.  
\begin{enumerate}
\item $H$ has a $\mathrm{CNT}$ limit at $p$ if and only if there is a collar chart $\phi$ in a collar neighbourhood of $\Gamma$ such that $H \circ \phi^{-1}$ has a non-tangential limit at $\phi(p)$.  
\item $H$ has $\mathrm{CNT}$ boundary values quasieverywhere on ${\Gamma}$ if and only if there is a collar chart $\phi$ in a collar neighbourhood of $\Gamma$ such that $H \circ \phi^{-1}$ has non-tangential limits quasieverywhere on $\mathbb{S}^1$.
\end{enumerate}
\end{theorem}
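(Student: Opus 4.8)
The plan is to derive this directly from the \emph{proof} of Theorem \ref{th:CNT_point_independence} rather than from its statement, exactly as the remark preceding the theorem suggests. The only features of the canonical collar chart used in that proof were that it is a collar chart in the sense of the definition and that, by Lemma \ref{le:continuous_extension_collar} together with the Schwarz reflection principle, a transition map between two collar charts of $\Gamma$ extends to a map that is conformal across the unit circle; conformal maps carry Stolz angles into Stolz angles, and nothing else is needed. So the strategy is: reduce part (1) to comparing an arbitrary collar chart with the canonical one, and then deduce part (2) by applying part (1) pointwise and invoking the invariance of capacity-zero sets already established.

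For part (1), the direction ``$H$ has a CNT limit at $p$'' $\Rightarrow$ ``some collar chart works'' is immediate: the canonical collar chart with respect to a chosen $(\riem,q)$, used to define CNT limits, is itself a collar chart. For the converse, suppose $\phi:A\to\mathbb{A}$ is a collar chart and $H\circ\phi^{-1}$ has a non-tangential limit at $\phi(p)$, and let $\psi$ be the canonical collar chart with respect to some $(\riem,q)$. Normalizing $\phi$ and $\psi$ exactly as in the proof of Theorem \ref{th:null_sets_both_sides} (post-composing with affine maps and, if necessary, with $z\mapsto 1/z$), we may assume both charts carry $\Gamma$ to $\mathbb{S}^1$ as the outer boundary of their image annuli; since these normalizing maps are M\"obius, hence conformal at every point of $\mathbb{S}^1$, they respect non-tangential limits, so it suffices to treat the normalized charts $\tilde\phi,\tilde\psi$. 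Shrinking to a common collar neighbourhood $A_0$ of $\Gamma$ inside both domains, the transition map $\tilde\phi\circ\tilde\psi^{-1}$ is a conformal bijection between two collar neighbourhoods of $\mathbb{S}^1$ in $\disk$; by Lemma \ref{le:continuous_extension_collar} it extends to a continuous injection on the closure, taking $\mathbb{S}^1=\tilde\psi(\Gamma)$ homeomorphically onto $\mathbb{S}^1=\tilde\phi(\Gamma)$, and by Schwarz reflection it extends conformally to a neighbourhood of $\mathbb{S}^1$. As in the proof of Theorem \ref{th:CNT_point_independence}, conformality at $\tilde\psi(p)$ forces $\tilde\phi\circ\tilde\psi^{-1}$ to map each Stolz angle at $\tilde\psi(p)$ into a (possibly larger) Stolz angle at $\tilde\phi(p)$, and similarly for the inverse; hence $H\circ\tilde\psi^{-1}=(H\circ\tilde\phi^{-1})\circ(\tilde\phi\circ\tilde\psi^{-1})$ has a non-tangential limit at $\tilde\psi(p)$, i.e. $H$ has a CNT limit at $p$.

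Part (2) follows by applying part (1) at each point of $\Gamma$. Suppose $\phi$ is a collar chart and $H\circ\phi^{-1}$ has non-tangential limits off a set $E\subset\mathbb{S}^1$ of logarithmic capacity zero. Then for every $p\in\Gamma\setminus\phi^{-1}(E)$, part (1) gives a CNT limit of $H$ at $p$, so the exceptional set of $H$ on $\Gamma$ is contained in $\phi^{-1}(E)$. It remains to see that $\phi^{-1}(E)$ is null with respect to $\riem$, i.e. that its image under the canonical collar chart has capacity zero; but the transition map from $\phi$ to the canonical chart restricts on $\mathbb{S}^1$ to an analytic diffeomorphism, which preserves sets of logarithmic capacity zero (this is the mechanism used in the proof of Theorem \ref{th:null_sets_both_sides}), so the canonical-chart image of $\phi^{-1}(E)$, being the image of $E$ under that diffeomorphism, has capacity zero. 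The reverse implication is identical: if $H$ has CNT limits off a null set $I\subset\Gamma$, then for any collar chart $\phi$ the exceptional set of $H\circ\phi^{-1}$ is contained in $\phi(I)$, which has capacity zero by the same invariance applied to the transition map from the canonical chart to $\phi$.

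There is no serious obstacle here; the theorem is essentially a byproduct of the argument for Theorem \ref{th:CNT_point_independence}. The only points needing a little care are the normalization step — arranging that both charts send $\Gamma$ to $\mathbb{S}^1$ from the same side and checking that the normalizing M\"obius maps respect both non-tangential approach and capacity — and, in part (2), keeping straight that ``null with respect to $\riem$'' is defined through the canonical chart whereas the hypothesis mentions an arbitrary collar chart; both are handled by the capacity-invariance already contained in Theorem \ref{th:null_sets_both_sides}.
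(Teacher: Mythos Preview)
Your proposal is correct and takes essentially the same approach as the paper, which simply remarks that the proof of Theorem~\ref{th:CNT_point_independence} does not use the fact that the collar charts are canonical. You have spelled out the details of that observation---normalizing the charts, applying Lemma~\ref{le:continuous_extension_collar} and Schwarz reflection to get a conformal extension of the transition map across $\mathbb{S}^1$, and invoking the capacity invariance from Theorem~\ref{th:null_sets_both_sides} for part~(2)---exactly as intended.
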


We see that this definition is independent of the regularity of the boundary by design.  In contrast, the main results, which involve limiting values from both sides of the curve, cannot be obtained from conformal invariance, and depend crucially on the regularity of the curve $\Gamma$.  The reader should keep this in mind through the rest of the paper.

\end{subsection}
\begin{subsection}{Boundary values of harmonic functions of bounded Dirichlet energy on doubly-connected domains}

In this section, we show that the existence of CNT boundary values of Dirichlet-bounded harmonic functions quasieverywhere only requires the harmonic function to exist on a 
collar neighbourhood.
\begin{theorem} \label{th:nearby_extension} Let $\Gamma$ be a Jordan curve in $\mathbb{C}$, and let $A$ be a collar neighbourhood of $\Gamma$.  Let $\riem$ be the component of the complement of $\Gamma$ in $\sphere$ which contains $A$. 
      If $H \in \mathcal{D}_\mathrm{harm}(A)$, then $H$ has $\mathrm{CNT}$ boundary values quasieverywhere on $\Gamma$ with respect to $\riem$.
\end{theorem}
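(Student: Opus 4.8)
The plan is to split off the part of $H$ that behaves regularly across $\Gamma$, so that the assertion reduces to the version of Beurling's theorem on the Jordan domain $\riem$, namely Theorem~\ref{th:conf_inv_Beurling}. The curve $\Gamma$ is a Jordan curve in $\mathbb{C}$, hence strip-cutting, and the other boundary curve $\Gamma'$ of the collar $A$ is a Jordan curve in $\sphere$, hence also strip-cutting; thus $A$ is a doubly-connected domain in $\sphere$ bounded by two strip-cutting Jordan curves. (If one prefers not to invoke strip-cuttingness of $\Gamma'$, one may first shrink $A$ to a sub-collar of $\Gamma$ whose inner boundary is an analytic curve, obtained by pulling a concentric circle back under a conformal identification of $A$ with a round annulus; $H$ restricted to this sub-collar still lies in $\mathcal{D}_\mathrm{harm}$.) Let $B_1$ be the component of $\sphere \setminus \Gamma$ containing $A$ --- which is precisely $\riem$ --- and let $B_2$ be the component of $\sphere \setminus \Gamma'$ containing $A$; since $\Gamma$ is disjoint from $\Gamma'$ and lies in the closure of $A \subset B_2$, we have $\Gamma \subset B_2$. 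Fix $p_1 \in \riem \setminus \mathrm{cl}(A)$ and let $g_{p_1}$ be Green's function of $\riem$ with singularity at $p_1$. Applying Corollary~\ref{co:harm_in_out_double_decomp} to the harmonic function $H$ (its proof, via Theorem~\ref{th:Rodin_harmonic_decomp}, makes no use of holomorphicity) gives
\[
  H = h_1 + h_2 + c\, g_{p_1}, \qquad h_1 \in \mathcal{D}_\mathrm{harm}(\riem), \quad h_2 \in \mathcal{D}_\mathrm{harm}(B_2), \quad c \in \mathbb{C}.
\]

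Next I would analyse the three summands near $\Gamma$. The function $h_1 \in \mathcal{D}_\mathrm{harm}(\riem)$ lives on the simply-connected domain $\riem$, which is bordered by the strip-cutting Jordan curve $\Gamma$; so Theorem~\ref{th:conf_inv_Beurling} applies and $h_1$ has CNT boundary values with respect to $\riem$ at every point of $\Gamma$ outside a null set. The function $h_2$ is harmonic, hence continuous, on the open set $B_2 \supset \Gamma$, so for a collar chart $\phi$ whose domain lies inside $A$ (hence inside $B_2$), $h_2 \circ \phi^{-1}$ extends continuously to the closed annulus by Lemma~\ref{le:continuous_extension_collar}, and therefore has an ordinary limit at every point of the boundary circle; by Theorem~\ref{th:CNT_any_collar_chart}, $h_2$ has a CNT limit at every point of $\Gamma$. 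Likewise $g_{p_1}$ is harmonic on such a collar (its singularity $p_1$ lies outside $\mathrm{cl}(A)$) and extends continuously up to $\Gamma$ with boundary value $0$ there, by the defining property of Green's function, so the same reasoning gives that $g_{p_1}$ has CNT limit $0$ everywhere on $\Gamma$. Summing, $H$ has a CNT limit with respect to $\riem$ at every point of $\Gamma$ where $h_1$ does --- that is, quasieverywhere --- which is the assertion.

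I do not foresee a genuine obstacle. The essential point is that the decomposition of Corollary~\ref{co:harm_in_out_double_decomp} concentrates all of the possibly-irregular boundary behaviour of $H$ into the single summand $h_1$, a Dirichlet-bounded harmonic function on the full Jordan domain $\riem$ --- exactly the hypothesis of Beurling's theorem --- while the remaining pieces $h_2$ and $g_{p_1}$ are harmonic across, respectively up to, $\Gamma$ and so contribute nothing to the exceptional null set. The only step requiring a little attention is checking the hypotheses of that corollary (both boundary curves of the collar strip-cutting; the singularity $p_1$ taken off the collar), all of which are immediate here.
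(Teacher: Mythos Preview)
Your proof is correct and follows essentially the same route as the paper: decompose $H$ via Corollary~\ref{co:harm_in_out_double_decomp} as $h_1 + h_2 + c\,g_{p_1}$, observe that $h_2$ and $g_{p_1}$ are continuous up to $\Gamma$ and hence have CNT limits everywhere, and apply Theorem~\ref{th:conf_inv_Beurling} to $h_1 \in \mathcal{D}_{\mathrm{harm}}(\riem)$. You were right to note that the corollary, although stated for $\mathcal{D}(A)$, applies equally to $\mathcal{D}_{\mathrm{harm}}(A)$ since its proof uses only harmonicity.
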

\begin{proof}  Set $\Gamma_1 = \Gamma$ and let $\Gamma_2$ be the other Jordan curve bounding
 $A$.  Set $B_1=\riem$ and let $B_2$ be the component of the complement of $\Gamma_2$ containing $A$.
  Let $q' \in B_1 \backslash \text{cl} A$.
 Given any $H \in \mathcal{D}_\mathrm{harm}(A)$, by Corollary \ref{co:harm_in_out_double_decomp}
 we can write $H = h_1 + h_2 + c g_{q'}$ where $h_i \in \mathcal{D}_\mathrm{harm}(B_i)$, $g_{q'}$ is Green's function on $\riem=B_1$ and $c\in \mathbb{C}$.  Now $g_{q'}$ and $h_2$ are continuous on
 $\Gamma_1$, so in particular they have CNT boundary values quasieverywhere with respect to $\riem$.  Since $h_1 \in \mathcal{D}(B_1)$, by Theorem \ref{th:conf_inv_Beurling} it has CNT boundary values quasieverywhere
 in $\mathcal{H}(\Gamma,\riem)$.  Thus $H$ has CNT  boundary values quasieverywhere on $\Gamma$ with respect to $\riem$.   
\end{proof}

Let $\Gamma$ be a Jordan curve in $\mathbb{C}$ and let $\riem$ be one of the connected components of the complement. By Theorem \ref{th:conf_inv_Beurling} every element of $\mathcal{D}(\riem)$ has CNT boundary values quasieverywhere on $\Gamma$ with respect to $\riem$.  Define $\mathcal{H}(\Gamma,\riem)$ to be the set of functions in $\mathcal{B}(\Gamma,\riem)$ obtained in this way.  

\begin{theorem} \label{th:local_extension_enough} Let $\Gamma$ be a Jordan curve in $\mathbb{C}$. Let $A$ be a collar neighbourhood of $\Gamma$ and $\riem$ be the connected component of the complement of $\Gamma$ containing $A$. Then given any  $H \in \mathcal{D}(A)$, the boundary values in $h \in \mathcal{B}(\Gamma,\riem)$ 
are in $\mathcal{H}(\Gamma,\riem)$.
\end{theorem}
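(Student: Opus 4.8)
The plan is to reduce to the model case $\riem = \disk$ by a Riemann map, and there to split $H$ into the part that already extends into $\disk$ with bounded Dirichlet energy and a remainder whose boundary values, although they come from the ``wrong side'', still extend into $\riem$.

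First I would fix a conformal map $f\colon \disk \to \riem$, which by Carath\'eodory's theorem extends to a homeomorphism $\overline{\disk} \to \overline{\riem}$ carrying $\mathbb{S}^1$ onto $\Gamma$. Three facts make the reduction legitimate: the Dirichlet seminorm is conformally invariant; existence of CNT limits is conformally invariant (Remark \ref{re:CNT_definition_confinv}); and the notion of null set transfers correctly under $f$, because the canonical collar chart of $\riem$ with singularity at $f(0)$ is $1/f^{-1}$ (from the explicit formula for the canonical chart together with \eqref{eq:Green_coperiod}), so by Theorem \ref{th:null_sets_both_sides} a closed $I \subset \Gamma$ is null with respect to $\riem$ if and only if $f^{-1}(I)$ has logarithmic capacity zero. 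Pulling $H$ back by $f$ and restricting to a round subcollar (a collar neighbourhood of $\mathbb{S}^1$ in $\disk$ always contains some $\{\rho < |z| < 1\}$, since the complementary closed inner region lies in $\{|z| \le \rho\}$), it suffices to prove: if $H \in \mathcal{D}(\{\rho < |z| < 1\})$, then the nontangential boundary values of $H$ on $\mathbb{S}^1$ agree quasi-everywhere with those of some element of $\mathcal{D}_{\mathrm{harm}}(\disk)$, hence lie in $\mathcal{H}(\mathbb{S}^1,\disk)$.

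Next I would expand $H = \sum_n a_n z^n$ in its Laurent series and write $H = G + F$ with $G = \sum_{n\ge 0} a_n z^n$ (the regular part) and $F = \sum_{n<0} a_n z^n$ (the principal part). The regular part converges on all of $\disk$; by orthogonality of the monomials on circles one has $\iint |dG|^2 \le \iint |dH|^2 < \infty$ over $\{\rho<|z|<1\}$, and since $G$ is moreover holomorphic on the compact set $\{|z|\le\rho\}$ it follows that $G \in \mathcal{D}(\disk)$. The principal part converges on $\{|z|>\rho\}$, hence in a full neighbourhood of $\mathbb{S}^1$ and with geometrically decaying coefficients; in particular $\sum_{m\ge 1} m|a_{-m}|^2 < \infty$, so $F^{\flat}(z) := \sum_{m\ge 1} a_{-m}\bar z^{m}$ is a well-defined element of $\overline{\mathcal{D}(\disk)} \subset \mathcal{D}_{\mathrm{harm}}(\disk)$. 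Since $F$ is holomorphic across $\mathbb{S}^1$ and $F = F^{\flat}$ on $\mathbb{S}^1$, the nontangential boundary values of $F$ taken from inside $\disk$ exist everywhere and equal $F^{\flat}|_{\mathbb{S}^1}$. (If instead one reads $H$ as merely harmonic on the collar, the same argument applies with the harmonic Laurent expansion, the extra $\log|z|$ term being continuous and vanishing on $\mathbb{S}^1$.)

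Putting this together, $G + F^{\flat} \in \mathcal{D}_{\mathrm{harm}}(\disk)$ has CNT boundary values quasi-everywhere on $\mathbb{S}^1$ by Theorem \ref{th:conf_inv_Beurling}; and wherever the CNT boundary value of $G$ exists (quasi-everywhere) the CNT boundary value of $H = G+F$ exists and equals it plus $F|_{\mathbb{S}^1} = F^{\flat}|_{\mathbb{S}^1}$, that is, equals the CNT boundary value of $G + F^{\flat}$. Hence the boundary values $h$ of $H$ coincide quasi-everywhere with those of $G + F^{\flat}$, so $h \in \mathcal{H}(\mathbb{S}^1,\disk)$; transporting back by $f$ (using conformal invariance once more, so that $(G+F^{\flat})\circ f^{-1} \in \mathcal{D}_{\mathrm{harm}}(\riem)$ has the same CNT boundary values on $\Gamma$ as $H$) gives $h \in \mathcal{H}(\Gamma,\riem)$. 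The Laurent bookkeeping and the energy estimates are routine; the step that requires care is the conformal reduction, and in particular the compatibility of the notion of null set with the Riemann map. Conceptually the crux is the observation that the principal part $F$, although it does \emph{not} continue holomorphically into $\riem$, has boundary values that \emph{do} continue into $\riem$ as a harmonic Dirichlet function — forced by the geometric decay of its Laurent coefficients — which is exactly why $\mathcal{H}(\Gamma,\riem)$ (boundary values of $\mathcal{D}_{\mathrm{harm}}(\riem)$) is the right target.
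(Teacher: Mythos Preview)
Your proof is correct, and it takes a genuinely different route from the paper's argument.

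The paper decomposes $H$ via Royden's theorem (Corollary~\ref{co:harm_in_out_double_decomp}) as $h_1 + h_2 + c g_{q'}$, where $h_1 \in \mathcal{D}_{\mathrm{harm}}(\riem)$ already has the right boundary values, and $h_2$ lives on the \emph{other} side of $\Gamma$; then it invokes Theorem~\ref{th:qc_both_sides_same_H} (the transmission result for quasicircles in $\sphere$, proved in an earlier paper) to conclude that the boundary values of $h_2$ lie in $\mathcal{H}(\Gamma)$. The general Jordan curve case is reduced to the quasicircle case by a conformal map.

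You instead uniformize immediately to $\disk$ and use the Laurent expansion on a round subannulus, which makes the ``inside/outside'' decomposition explicit and elementary: the principal part $F$ converges for $|z|>\rho$ with geometrically decaying coefficients, so its reflection $F^{\flat}(z)=\sum a_{-m}\bar z^{m}$ lands in $\overline{\mathcal{D}(\disk)}$ and matches $F$ on $\mathbb{S}^1$. This is exactly the transmission through $\mathbb{S}^1$, done by hand via $z \mapsto 1/\bar z$, so you bypass Theorem~\ref{th:qc_both_sides_same_H} entirely. The price is that your argument is tied to the circle model; the gain is self-containment --- no black-box quasicircle result is needed for this theorem. Your justification for the conformal reduction (null sets and CNT limits transfer via Theorems~\ref{th:null_sets_both_sides} and~\ref{th:CNT_any_collar_chart}, since $f^{-1}$ restricted to a collar is a collar chart) is sound; the precise claim that the canonical chart is $1/f^{-1}$ is an orientation-dependent statement that is not quite right as written (with the natural orientation one gets $f^{-1}$ itself up to rotation), but this is immaterial since Theorem~\ref{th:null_sets_both_sides} covers all collar charts. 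Your parenthetical remark covering the harmonic (rather than holomorphic) reading of $\mathcal{D}(A)$ is also correct and worth keeping, since the subsequent corollaries use the harmonic version.
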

\begin{proof}  Let $B_1$, $B_2$, and the decomposition $H = h_1 + h_2 + c g_{q'}$ be as in
the proof of Theorem \ref{th:nearby_extension} above.   Since $h_2$ is harmonic on an open neighbourhood of $B_1^c$, it has bounded Dirichlet energy on the interior
 of the complement  
 of $B_1$.  Thus it has boundary values in $\mathcal{H}(\Gamma_1)$ by Theorem \ref{th:qc_both_sides_same_H}.  Since $h_1 \in \mathcal{D}_{\text{harm}}(B_1)$ and $g_{q'}$ is zero on $\Gamma$, this shows that the boundary values 
 of $H$ are in $\mathcal{H}(\Gamma)= \mathcal{H}(\Gamma,B)$.  
 
 If $\Gamma$ is not a quasicircle, let $f:\Omega \rightarrow \riem$ be a conformal map where $\partial \Omega$
 is a quasicircle and $\Omega \subset \mathbb{C}$ is simply connected.  The claim now follows 
 from conformal invariance (Remark \ref{re:CNT_definition_confinv}).  
\end{proof}

We then immediately have that
\begin{corollary}  \label{co:local_in_sphere}
 Let $\Gamma$ be a Jordan curve in $\sphere$. Let $\riem$ be one of the components of the complement of 
 $\Gamma$ in $\sphere$.    The following are equivalent.
 \begin{enumerate}
  \item $h \in \mathcal{H}(\Gamma,\riem)$;
  \item There is a Dirichlet bounded harmonic extension of $h$ to a collar nbhd $A$ in $\riem$;
  \item There is a Dirichlet bounded harmonic extension of $h$ to every collar nbhd $A$ in $\riem$;
 \end{enumerate}
 where by extension we mean that the boundary values equal $h$ $\mathrm{CNT}$ quasieverywhere.
\end{corollary}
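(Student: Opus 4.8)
The plan is to deduce the chain of equivalences directly from Theorem \ref{th:local_extension_enough} together with the basic closure properties of $\mathcal{H}(\Gamma,\riem)$. The cleanest route is to establish $(1)\Rightarrow(3)\Rightarrow(2)\Rightarrow(1)$, since $(3)\Rightarrow(2)$ is immediate (any single collar neighbourhood witnesses the existential statement, so restrict the extension guaranteed in (3) to one chosen collar), and $(2)\Rightarrow(1)$ is precisely the content of Theorem \ref{th:local_extension_enough}: if $h$ has a Dirichlet-bounded harmonic extension $H$ to a collar neighbourhood $A$ of $\Gamma$ in $\riem$, then its boundary values lie in $\mathcal{H}(\Gamma,\riem)$. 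Note here that $\mathcal{H}(\Gamma,\riem)$ has already been defined (just before the statement of Theorem \ref{th:local_extension_enough}) as the set of CNT boundary values of elements of $\mathcal{D}(\riem)$, so for the sphere case $\riem$ is simply connected and one is genuinely in the setting of Theorems \ref{th:conf_inv_Beurling} and \ref{th:local_extension_enough}.

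For $(1)\Rightarrow(3)$, suppose $h \in \mathcal{H}(\Gamma,\riem)$, so by definition $h$ is the CNT boundary value (quasi-everywhere, with respect to $\riem$) of some $H \in \mathcal{D}(\riem)$. Let $A$ be an arbitrary collar neighbourhood of $\Gamma$ in $\riem$. Then $\left. H \right|_A$ is harmonic on $A$, and since $A \subset \riem$ one has $\|dH\|_A \leq \|dH\|_\riem < \infty$, so $\left. H \right|_A \in \mathcal{D}_{\mathrm{harm}}(A)$; in fact, since $\overline{\partial} H = 0$ on $\riem$ we get $\left. H \right|_A \in \mathcal{D}(A)$. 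Its CNT boundary values with respect to $\riem$ are, by definition, the same as those of $H$, hence equal $h$ quasi-everywhere. This produces the required Dirichlet-bounded harmonic extension of $h$ to the given collar, and since $A$ was arbitrary, (3) holds. The one point requiring a line of justification is that the CNT boundary values of $\left.H\right|_A$ "agree with those of $H$" in the sense demanded by the theorem (equality CNT quasi-everywhere): this is immediate because the CNT limit at a point $p\in\Gamma$ depends only on the germ of the function along any non-tangential approach region inside $\riem$, and such regions eventually enter $A$; one may cite Theorem \ref{th:CNT_any_collar_chart}, which shows the CNT notion can be computed through any collar chart of $\riem$.

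The steps here are all routine, and I do not expect a serious obstacle: the genuine analytic work — Beurling's theorem, the decomposition of Dirichlet-bounded harmonic functions on doubly-connected domains, and the transplant of boundary data across the curve via conformal welding of a quasicircle — has already been carried out in Theorems \ref{th:conf_inv_Beurling}, \ref{th:qc_both_sides_same_H}, \ref{th:nearby_extension}, and \ref{th:local_extension_enough} and in Corollary \ref{co:harm_in_out_double_decomp}. The only mild care needed is bookkeeping about what "extension" means (the parenthetical in the statement fixes this: boundary values equal $h$ CNT quasi-everywhere) and making sure that restricting a global Dirichlet function to a collar does not change its boundary behaviour, which follows from the locality of CNT limits together with Theorem \ref{th:CNT_any_collar_chart}. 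So the "hard part," such as it is, amounts to correctly invoking Theorem \ref{th:local_extension_enough} for the implication $(2)\Rightarrow(1)$, and this is simply a matter of citation.
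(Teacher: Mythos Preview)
Your proposal is correct and is precisely the argument the paper has in mind: the paper gives no explicit proof, merely writing ``We then immediately have that'' after Theorem~\ref{th:local_extension_enough}, and your chain $(1)\Rightarrow(3)\Rightarrow(2)\Rightarrow(1)$ is the obvious unpacking of this. The only quibble is that your aside ``since $\overline{\partial}H=0$'' is unnecessary and possibly misplaced---the space $\mathcal{H}(\Gamma,\riem)$ is (depending on how one reads the paper's slightly inconsistent notation) built from harmonic rather than holomorphic Dirichlet functions, so you need only that $\left.H\right|_A\in\mathcal{D}_{\mathrm{harm}}(A)$, which follows from restriction alone.
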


For quasicircles, Theorem \ref{th:qc_both_sides_same_H} allows us to remove the condition that $A$ is in a fixed component.  
\begin{corollary}  \label{co:local_in_sphere_either_side}
 Let $\Gamma$ be a quasicircle in $\sphere$.  The following are equivalent.
 \begin{enumerate}
  \item $h \in \mathcal{H}(\Gamma)$;
  \item There is a Dirichlet bounded harmonic extension of $h$ to a collar nbhd $A$;
  \item There is a Dirichlet bounded harmonic extension of $h$ to every collar nbhd $A$;
 \end{enumerate}
 where by extension we mean that the boundary values equal $h$ $\mathrm{CNT}$ quasieverywhere.
\end{corollary}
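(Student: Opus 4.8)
The plan is to reduce the statement to Corollary \ref{co:local_in_sphere}, which already handles the case where the collar neighbourhood $A$ lies in a fixed component $\riem$ of $\sphere \backslash \Gamma$, and then upgrade to ``either side'' using the fact that for a quasicircle the two classes $\mathcal{H}(\Gamma,\Omega_1)$ and $\mathcal{H}(\Gamma,\Omega_2)$ coincide (Theorem \ref{th:qc_both_sides_same_H}). Write $\Omega_1$ and $\Omega_2$ for the two components of $\sphere\backslash\Gamma$, and recall that since $\Gamma$ is a quasicircle, $\mathcal{H}(\Gamma) := \mathcal{H}(\Gamma,\Omega_1) = \mathcal{H}(\Gamma,\Omega_2)$ and $\mathcal{B}(\Gamma,\Omega_1)=\mathcal{B}(\Gamma,\Omega_2)=\mathcal{B}(\Gamma)$ by Theorem \ref{th:quasicircle_null_eitherside}, so ``CNT quasieverywhere'' is unambiguous.

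First I would prove $(1)\Rightarrow(3)$. If $h\in\mathcal{H}(\Gamma)$, then by definition $h$ is the CNT boundary value of some $H\in\mathcal{D}_{\mathrm{harm}}(\Omega_1)$ and also (using $\mathcal{H}(\Gamma,\Omega_1)=\mathcal{H}(\Gamma,\Omega_2)$) of some $H'\in\mathcal{D}_{\mathrm{harm}}(\Omega_2)$. Given an arbitrary collar neighbourhood $A$ of $\Gamma$, one of the two components of $A\backslash\Gamma$ lies in $\Omega_1$ and the other in $\Omega_2$; in fact $A$ itself, being connected and disjoint from $\Gamma$'s complement structure only through its boundary, lies in exactly one of the $\Omega_i$, say $A\subset\Omega_1$ (the other case is symmetric, using $H'$ instead). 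Then $H|_A\in\mathcal{D}_{\mathrm{harm}}(A)$ is a Dirichlet-bounded harmonic extension of $h$ to $A$, and by Corollary \ref{co:local_in_sphere} (applied with $\riem=\Omega_1$, whose equivalence $(1)\Leftrightarrow(3)$ gives extensions to every collar neighbourhood inside $\Omega_1$) its CNT boundary values equal $h$ quasieverywhere. This establishes $(3)$, and trivially $(3)\Rightarrow(2)$.

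It remains to prove $(2)\Rightarrow(1)$, which is the substantive direction. Suppose $H\in\mathcal{D}_{\mathrm{harm}}(A)$ has CNT boundary values equal to $h$ quasieverywhere, where $A$ is some collar neighbourhood of $\Gamma$; again $A\subset\Omega_1$ for one of the two components, say $\riem := \Omega_1$. By Theorem \ref{th:local_extension_enough} applied to the Jordan curve $\Gamma$ in $\mathbb{C}$ (after, if necessary, moving the point at infinity off $\mathrm{cl}\,A\cup\Gamma$ by a Möbius transformation so that we are genuinely in the plane), the boundary values of $H$ lie in $\mathcal{H}(\Gamma,\riem)=\mathcal{H}(\Gamma,\Omega_1)$. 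Since $\Gamma$ is a quasicircle, Theorem \ref{th:qc_both_sides_same_H} gives $\mathcal{H}(\Gamma,\Omega_1)=\mathcal{H}(\Gamma,\Omega_2)=\mathcal{H}(\Gamma)$, so $h\in\mathcal{H}(\Gamma)$, which is $(1)$.

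The main obstacle — such as it is — is purely bookkeeping: one must be careful that ``collar neighbourhood of $\Gamma$'' in $\sphere$ is allowed to sit on either side, match it against the one-sided statement of Corollary \ref{co:local_in_sphere}, and handle the point at infinity so that Theorems \ref{th:local_extension_enough} and \ref{th:qc_both_sides_same_H} (stated for Jordan curves in $\mathbb{C}$) apply. No genuinely new analytic input is needed; the corollary is a formal consequence of the already-established one-sided result together with the quasicircle characterization $\mathcal{H}(\Gamma,\Omega_1)=\mathcal{H}(\Gamma,\Omega_2)$.
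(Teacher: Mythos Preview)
Your argument is correct and is exactly the approach the paper takes: the paper simply states that for quasicircles, Theorem \ref{th:qc_both_sides_same_H} allows one to drop the restriction that $A$ lie in a fixed component, which amounts to combining Corollary \ref{co:local_in_sphere} with the identification $\mathcal{H}(\Gamma,\Omega_1)=\mathcal{H}(\Gamma,\Omega_2)$. Your only redundancy is the detour through Theorem \ref{th:local_extension_enough} and the point-at-infinity discussion in the $(2)\Rightarrow(1)$ step---you can invoke Corollary \ref{co:local_in_sphere} directly, since it is already stated for $\sphere$.
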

\end{subsection}

\begin{subsection}{Boundary values of $\mathcal{D}_{\text{harm}}(\riem)$ for general $\riem$}
  \begin{theorem}  \label{th:analytic_annular_enough} Let $R$ be a compact Riemann surface and $\Gamma$ be an analytic Jordan curve in $R$ which separates $R$ into two components, one of which is $\riem$.  
  Let $A$ be a collar neighbourhood of $\Gamma$ in $\riem$.  For any $h \in \mathcal{D}_{\mathrm{harm}}(A)$, 
  $h$ has $\mathrm{CNT}$ boundary values quasieverywhere with respect to $\riem$.  Furthermore there is an $H \in \mathcal{D}_{\mathrm{harm}}(\riem)$ whose $\mathrm{CNT}$ boundary values agree with those of $h$ quasi-everywhere.  This $H$ is unique and given by the harmonic Sobolev extension of the Sobolev trace of $h$ in $H^{1/2}(\Gamma)$.
 \end{theorem}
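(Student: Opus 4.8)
The plan is to transport the problem to a planar annulus via a collar chart of $\Gamma$, where the existence of boundary values is already guaranteed by Theorem~\ref{th:nearby_extension}, and then to identify those boundary values with the Sobolev trace by means of the in/out decomposition of Corollary~\ref{co:harm_in_out_double_decomp}. To set this up, fix once and for all a collar neighbourhood $A' \subseteq A$ of $\Gamma$, bounded by $\Gamma$ and an analytic curve $\Gamma'$, together with a collar chart $\phi \colon A' \to \mathbb{A}$, where $\mathbb{A} = \{\rho_0 < |z| < 1\}$, normalised (using Lemma~\ref{le:continuous_extension_collar}, and $z \mapsto \rho_0/z$ if necessary) so that $\phi$ carries $\Gamma$ onto $\mathbb{S}^1$. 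Since $\Gamma$ is analytic, the reflection principle allows $\phi$ to be extended conformally across $\Gamma$, so its restriction to $\Gamma$ is an analytic diffeomorphism onto $\mathbb{S}^1$; in particular $\phi$ transfers $H^1$ functions, Sobolev traces, and null sets from one side to the other. For the first assertion, $h \circ \phi^{-1} \in \mathcal{D}_{\mathrm{harm}}(\mathbb{A})$ and $\mathbb{A}$ is a collar neighbourhood of $\mathbb{S}^1$ inside the component $\disk$ of $\sphere \setminus \mathbb{S}^1$; by Theorem~\ref{th:nearby_extension}, $h \circ \phi^{-1}$ has CNT boundary values quasieverywhere on $\mathbb{S}^1$ with respect to $\disk$, which, since the canonical collar chart of $\disk$ is a M\"obius transformation, simply means that $h \circ \phi^{-1}$ has ordinary non-tangential limits quasieverywhere on $\mathbb{S}^1$. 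Theorem~\ref{th:CNT_any_collar_chart}(2), applied with the collar chart $\phi$, then yields CNT boundary values of $h$ quasieverywhere on $\Gamma$ with respect to $\riem$. The identical argument shows that every element of $\mathcal{D}_{\mathrm{harm}}(\riem)$ has CNT boundary values quasieverywhere on $\Gamma$.

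Next I would invoke the Sobolev theory. Since $\riem$ is relatively compact in $R$ with analytic boundary $\Gamma$, Theorem~\ref{equiv and sobolevs in riem} gives $\mathcal{D}_{\mathrm{harm}}(\riem) \subseteq \dot{H}^1(\riem) = H^1(\riem)$, and likewise $h|_{A'} \in H^1(A')$, since $A'$ is relatively compact with Lipschitz boundary and so Poincar\'e's inequality supplies the missing $L^2$ bound. Hence $h$ has a well-defined Sobolev trace $f := h|_\Gamma \in H^{1/2}(\Gamma)$, independent of the choice of $A'$. Now \emph{define} $H$ to be the harmonic Sobolev extension of $f$ to $\riem$: pick $\tilde{H} \in H^1(\riem)$ with $\mathrm{tr}_\Gamma \tilde{H} = f$ (the trace operator on $\riem$ has a bounded right inverse), solve $\Delta w = -\Delta \tilde{H}$ with $w \in H^1_0(\riem)$ by the variational method, and put $H = \tilde{H} + w$. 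Then $H$ is harmonic, lies in $H^1(\riem) = \dot{H}^1(\riem)$, hence $H \in \mathcal{D}_{\mathrm{harm}}(\riem)$, and $\mathrm{tr}_\Gamma H = f$.

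The heart of the matter is to show that $H$ has the same CNT boundary values as $h$; I expect this to be \textbf{the main obstacle}, not because the potential theory is hard, but because it forces one to reconcile the Sobolev trace with the CNT limit through both the chart and the decomposition. Consider $u := H - h$, defined and harmonic on $A'$: it lies in $H^1(A')$ with $\mathrm{tr}_\Gamma u = f - f = 0$. Transporting by $\phi$, $v := u \circ \phi^{-1} \in \mathcal{D}_{\mathrm{harm}}(\mathbb{A})$ has vanishing Sobolev trace on $\mathbb{S}^1$. Apply Corollary~\ref{co:harm_in_out_double_decomp} with $R = \sphere$, boundary curves $\mathbb{S}^1$ and $\{|z| = \rho_0\}$, and logarithmic singularity at $0$: $v = v_1 + v_2 + c\, g_0$, where $v_1 \in \mathcal{D}_{\mathrm{harm}}(\disk)$, $v_2 \in \mathcal{D}_{\mathrm{harm}}(\{|z| > \rho_0\} \cup \{\infty\})$, and $g_0(z) = -\log|z|$. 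Since $g_0$ vanishes on $\mathbb{S}^1$, and $v_2$ is real-analytic there because it is harmonic on the open set $\{|z| > \rho_0\} \cup \{\infty\}$ that contains $\mathbb{S}^1$, additivity of the trace together with $\mathrm{tr}\, g_0 = 0$ and $\mathrm{tr}\, v_2 = v_2|_{\mathbb{S}^1}$ gives $\mathrm{tr}\, v_1 = -\,v_2|_{\mathbb{S}^1}$, a real-analytic function. But $v_1$ lies in $H^1(\disk)$ by Proposition~\ref{equiv and sobolevs in the plane} and is harmonic, hence equal to the Poisson integral of its trace (their difference is a harmonic element of $H^1_0(\disk)$, hence zero), hence equal to the Poisson integral of a real-analytic function, hence real-analytic up to and beyond $\mathbb{S}^1$, with boundary value $-\,v_2|_{\mathbb{S}^1}$ at \emph{every} point. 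Therefore $v$ has non-tangential limit $-v_2|_{\mathbb{S}^1} + v_2|_{\mathbb{S}^1} + 0 = 0$ at every point of $\mathbb{S}^1$, so $u = H - h$ has CNT limit $0$ everywhere on $\Gamma$. Combining this with the first paragraph (both $H$ and $h$ have CNT boundary values quasieverywhere on $\Gamma$) shows that the CNT boundary values of $H$ and of $h$ agree quasieverywhere.

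For uniqueness, suppose $\tilde{H} \in \mathcal{D}_{\mathrm{harm}}(\riem)$ also has CNT boundary values agreeing with those of $h$ quasieverywhere; then $w := H - \tilde{H} \in \mathcal{D}_{\mathrm{harm}}(\riem)$ has CNT limit $0$ quasieverywhere on $\Gamma$. Decomposing $w \circ \phi^{-1} = w_1 + w_2 + c'\, g_0$ on $\mathbb{A}$ exactly as above, the vanishing of the CNT limit forces the non-tangential limit of $w_1$ to equal $-\,w_2|_{\mathbb{S}^1}$ quasieverywhere on $\mathbb{S}^1$; by Fatou's theorem this limit equals $\mathrm{tr}\, w_1$ almost everywhere, so $\mathrm{tr}\, w_1 = -\,w_2|_{\mathbb{S}^1}$ almost everywhere, and hence $\mathrm{tr}_\Gamma w = 0$ almost everywhere. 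Lemma~\ref{Dirichlets princip}, applied to the harmonic function $w$ and the comparison function $0$, then forces $\|w\|_{H^1(\riem)} = 0$, so $w \equiv 0$ and $\tilde{H} = H$. Finally, $H$ was constructed as the harmonic Sobolev extension of the Sobolev trace of $h$, which is the last assertion of the theorem.
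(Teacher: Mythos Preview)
Your proof is correct and shares the essential ingredients with the paper's argument: both rest on the decomposition of Corollary~\ref{co:harm_in_out_double_decomp} together with the identification of CNT limits and Sobolev trace through the smoothness of the ``outer'' piece on $\Gamma$.  The organisation differs.  The paper decomposes $h$ itself on $A\subset R$ as $h = h_1 + h_2 + c g_q$ with $h_1 \in \mathcal{D}_{\mathrm{harm}}(\riem)$, and then constructs $H$ explicitly as $h_1 + h_3$, where $h_3$ is the classical (hence continuous-to-the-boundary) harmonic extension of the smooth function $h_2|_\Gamma$; the equality $H = h_1 + h_3$ is read off from equality of Sobolev traces.  You instead define $H$ as the Sobolev extension at the outset, transport the difference $H-h$ to a planar annulus via a collar chart, and decompose there, which lets you quote Theorem~\ref{th:nearby_extension} and work entirely in $\sphere$.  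Your route has the modest advantage of treating the uniqueness clause explicitly (via Lemma~\ref{Dirichlets princip} applied to $w$ and $0$), whereas the paper's proof establishes only the Sobolev-uniqueness of $H$ and leaves the CNT-uniqueness implicit.  One cosmetic point: in your uniqueness step the appeal to ``Fatou's theorem'' is really the statement that a harmonic $H^1(\disk)$ function is the Poisson integral of its $H^{1/2}$ trace and hence has non-tangential limits equal to that trace almost everywhere; this is correct but you may want to phrase it that way, or simply invoke Lemma~\ref{le:annulus_nontangential agrees with sobolev} directly.
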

 \begin{proof}
  We can shrink $A$ so that the inner boundary is also analytic.  Since $h \in \mathcal{D}_{\mathrm{harm}}(A)$, it has a Sobolev trace $\tilde{h} \in H^{1/2}(\Gamma)$.  
  Thus there is a unique $H \in \mathcal{D}_{\mathrm{harm}}(\riem)$ whose Sobolev trace equals $\tilde{h}$. 
  
  Let $B_1$ be the connected component of the complement of $\Gamma$ containing $A$, and let $B_2 = A \cup \text{cl} \riem^c$.  Fix $q \in \riem \backslash A$ and let $h= h_1 + h_2 + c g_q$ be the decomposition 
  guaranteed by Corollary \ref{co:harm_in_out_double_decomp}.  The restriction of $h_2$ to $\Gamma$ is a 
  $C^\infty$ function on a compact set, so it is in $H^{1/2}(\Gamma)$.  Therefore there is an $h_3 \in \mathcal{D}_{\text{harm}}(\riem)$ whose Sobolev trace equals $h_2$ almost everywhere on $\Gamma$.  But since $h_2$ is continuous on $\Gamma$, by the solution to the classical Dirichlet problem \cite{Ahlfors_Sario} there is a harmonic function which is continuous on the closure of $\riem$ which agrees with $h_2$ on $\Gamma$.  
  This function obviously equals $h_3$, so we see that $h_3$ extends continuously to $\Gamma$.  In particular, $h_3$ has CNT boundary values with respect to $\riem$ everywhere.  
 
 Since $h_2$ and $g_q$ are also continuous on $\Gamma$, they have CNT
 values everywhere on $\Gamma$ with respect to $\riem$.  Also, $h_1$ has CNT boundary values quasieverywhere on $\Gamma$ by Corollary \ref{co:local_in_sphere} since it lies in $\mathcal{D}_{\text{harm}}(\riem)$ (and similarly for $H$).  We conclude that $h$, $h_1 + h_2$, $h_1 + h_3$ all have CNT boundary values which agree quasieverywhere on $\Gamma$.  
  
  Thus the proof will be complete if we can show that $H = h_1 + h_3$.  
  But this follows immediately from the fact that the Sobolev trace of $h_3$ equals the Sobolev trace of $h_2$ almost everywhere, so the Sobolev trace of $H$ equals the Sobolev trace of $h_1 + h_2 = h$ almost everywhere.  
 \end{proof}
 
 Actually, a version of this theorem holds for strip-cutting Jordan curves, though we cannot identify 
 $H$ with a Sobolev extension.
 \begin{theorem}  \label{th:Jordan_annular_enough}
 Let $R$ be a compact Riemann surface and $\Gamma$ be a 
  strip-cutting Jordan curve in $R$ which separates $R$ into two components, one of which is $\riem$.  
  Let $A$ be a collar neighbourhood of $\Gamma$ in $\riem$.  For any $h \in \mathcal{D}_{\mathrm{harm}}(A)$, 
  $h$ has $\mathrm{CNT}$ boundary values quasieverywhere on $\Gamma$ with respect to $\riem$.  Furthermore, there is a unique $H \in \mathcal{D}_{\mathrm{harm}}(\riem)$ whose $\mathrm{CNT}$ boundary values agree with those of $h$ quasi-everywhere. 
 \end{theorem}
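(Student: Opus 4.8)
The plan is to reduce the statement to the analytic case already proved in Theorem~\ref{th:analytic_annular_enough}, by re-embedding $\riem$ in a compact Riemann surface in which the curve $\Gamma$ has become analytic. Since $\Gamma$ is strip-cutting, after shrinking the given collar I would fix a collar chart $\phi\colon A_{0}\to\{z:\rho<|z|<1\}$ of $\Gamma$ in $\riem$ with $A_{0}\subseteq A$ and $0<\rho<1$; by Lemma~\ref{le:continuous_extension_collar}, and after composing with an affine map and possibly with $z\mapsto 1/z$, we may assume that $\phi$ extends to a homeomorphism of $A_{0}\cup\Gamma$ onto $\{z:\rho<|z|<1\}\cup\mathbb{S}^{1}$ carrying $\Gamma$ onto $\mathbb{S}^{1}$. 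Setting $D=\{z\in\sphere:|z|>\rho\}$, I would form the quotient $\widehat R=(\riem\sqcup D)/\!\sim$ obtained by identifying each $z\in A_{0}$ with $\phi(z)\in D$. Because $\phi$ is a biholomorphism of open sets, $\widehat R$ is a Riemann surface in which $\riem$ and $D$ sit as open subsets with their original complex structures; it is compact, being the surface obtained from the compact bordered surface $\riem\cup\Gamma$ by attaching the disk $D$ along $\Gamma$ by means of $\phi$; and in $\widehat R$ the curve $\Gamma$ coincides with the image of $\mathbb{S}^{1}\subseteq D$, hence is an analytic Jordan curve, which separates $\widehat R$ into $\riem$ and one other component. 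Moreover $A$ is still a collar neighbourhood of $\Gamma$ in $\riem\subseteq\widehat R$, and $h$ still lies in $\mathcal{D}_{\mathrm{harm}}(A)$.

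I would then apply Theorem~\ref{th:analytic_annular_enough} with $(\widehat R,\Gamma,\riem,A,h)$ in place of $(R,\Gamma,\riem,A,h)$. This gives at once that $h$ has CNT boundary values quasieverywhere on $\Gamma$ with respect to $\riem$, and that there is a unique $H\in\mathcal{D}_{\mathrm{harm}}(\riem)$ whose CNT boundary values agree with those of $h$ quasieverywhere. It then remains to observe that this is precisely the assertion of the theorem, since every notion entering it is insensitive to the ambient compact surface: $\mathcal{D}_{\mathrm{harm}}(\riem)$ depends on $\riem$ alone; the canonical collar chart of $\riem$ is constructed from the Green's function of $\riem$, which is likewise determined by $\riem$ alone; hence the notion of a null subset of $\Gamma$ with respect to $\riem$, and, by Theorem~\ref{th:CNT_point_independence}, the notion of a CNT limit with respect to $\riem$, are the same whether $\riem$ is regarded as sitting in $R$ or in $\widehat R$. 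Therefore the conclusion of Theorem~\ref{th:analytic_annular_enough} obtained in $\widehat R$ is exactly what was to be proved.

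I expect the only point requiring genuine care to be the construction of $\widehat R$ in the first step: checking that the attachment is Hausdorff, compact, and carries a complex structure in which $\Gamma$ is analytic. This, however, is the standard construction of attaching a disk to a bordered Riemann surface along a collar, and can be disposed of quickly or cited. Everything else is bookkeeping with results already in hand. It is worth emphasising why one cannot simply repeat the proof of Theorem~\ref{th:analytic_annular_enough}: there the function $H$ is obtained as the harmonic extension of the Sobolev trace of $h$ in $H^{1/2}(\Gamma)$, which presupposes that $\Gamma$ is a smooth embedded submanifold---a hypothesis not available for a general strip-cutting Jordan curve, and one that the passage to $\widehat R$ restores by straightening $\Gamma$ to the real-analytic circle.
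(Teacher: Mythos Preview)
Your proposal is correct and follows essentially the same strategy as the paper's proof: both reduce to Theorem~\ref{th:analytic_annular_enough} by re-embedding $\riem$ in a compact surface in which its boundary becomes analytic, and then invoke the intrinsic (conformally invariant) nature of CNT limits, null sets, and the Dirichlet space. The only cosmetic difference is the choice of compactification: the paper takes the double of $\riem$ and phrases the transfer via the conformal inclusion map $f\colon\riem'\to\riem$ together with Remark~\ref{re:CNT_definition_confinv}, whereas you cap $\riem$ with a disk along a collar chart and argue directly that all the relevant notions depend only on $\riem$; both constructions serve the same purpose and the remaining steps are identical.
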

 \begin{proof}   There is a compact surface $R'$ and a $\riem'$ bounded by an analytic strip-cutting Jordan curve $\Gamma'$ in $R'$, and a conformal map $f:\riem' \rightarrow \riem$.  This can be obtained for example by completing $\riem$ to the double $R'$.  Let $\riem'$ be the subset of the double corresponding to $\riem$.   The inclusion map is conformal from $\riem$ to $\riem'$ and identifies $\partial \riem$ with an analytic curve $\Gamma'$.  
 
 We now apply Theorem \ref{th:analytic_annular_enough} to $\riem'$, $A' = f^{-1}(A)$, and $h' = h \circ f$, resulting in an $H' \in \mathcal{D}_{\text{harm}}(\riem')$ whose CNT limits agree with those of $h'$ 
 quasi-everywhere.
 Applying the conformal invariance of CNT limits (Remark \ref{re:CNT_definition_confinv}) completes the proof.
 \end{proof}
\begin{remark} The reader should observe that there is no Sobolev space $H^{1/2}(\Gamma)$ for general quasicircles $\Gamma$, and therefore there is no meaningful identification of CNT limits with Sobolev traces in this generality.
However, if the quasicircle is sufficiently regular, e.g. Ahlfors regular, then $H^{1/2}(\Gamma)$ can be replaced by an appropriate Besov space, see i.e. \cite{Johnsson} and \cite{Johnsson-Wallin}.  
 \end{remark} 
 
 We now make the following definition.
 \begin{definition}  Let $\Gamma$ be a strip-cutting Jordan curve in a compact Riemann surface $R$, which separates $R$ into two components, one of which is $\riem$.  Let $\mathcal{H}(\Gamma,\riem)$ be the set of functions in 
 $\mathcal{B}(\Gamma,\riem)$ which are the boundary values quasi-everywhere of an element of $\mathcal{D}(\riem)_{\text{harm}}$.  
 \end{definition}
 
 By Theorems \ref{th:CNT_any_collar_chart} and \ref{th:Jordan_annular_enough}, we immediately have
 the following characterization of $\mathcal{H}(\Gamma,\riem)$.
 \begin{corollary}  \label{co:Jordan_annular_all_same}
  Let $R$ be a compact Riemann surface and $\Gamma$ be a strip-cutting Jordan curve which separates $R$ into two components, one of which is $\riem$.  Then $u \in \mathcal{H}(\Gamma,\riem)$ if and only if there is a collar neighbourhood $A$ and an $h \in \mathcal{D}_{\mathrm{harm}}(A)$ whose $\mathrm{CNT}$ boundary values equal $u$ quasi-everywhere.
 \end{corollary}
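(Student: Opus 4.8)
The plan is to read off the statement directly from Theorems \ref{th:CNT_any_collar_chart} and \ref{th:Jordan_annular_enough}, treating the two implications separately; I do not expect a genuine obstacle, since the substantive work has already been done in Theorem \ref{th:Jordan_annular_enough}.

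For the direction that a local harmonic extension suffices, suppose there is a collar neighbourhood $A$ of $\Gamma$ in $\riem$ and an $h \in \mathcal{D}_{\mathrm{harm}}(A)$ whose $\mathrm{CNT}$ boundary values equal $u$ quasi-everywhere. I would simply invoke Theorem \ref{th:Jordan_annular_enough}: it produces a (unique) $H \in \mathcal{D}_{\mathrm{harm}}(\riem)$ whose $\mathrm{CNT}$ boundary values agree with those of $h$ quasi-everywhere, hence equal $u$ quasi-everywhere. By the definition of $\mathcal{H}(\Gamma,\riem)$ this exhibits $u$ as the quasi-everywhere boundary values of an element of $\mathcal{D}_{\mathrm{harm}}(\riem)$, so $u \in \mathcal{H}(\Gamma,\riem)$. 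Nothing is required here beyond quoting the theorem.

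For the converse, suppose $u \in \mathcal{H}(\Gamma,\riem)$, so by definition there is an $H \in \mathcal{D}_{\mathrm{harm}}(\riem)$ whose $\mathrm{CNT}$ boundary values equal $u$ quasi-everywhere. Fix an arbitrary collar neighbourhood $A$ of $\Gamma$ in $\riem$ and put $h = H|_A$. Since $A \subseteq \riem$ and $H$ is harmonic with $dH \in L^2(\riem)$, the restriction $h$ is harmonic on $A$ with $dh \in L^2(A)$, so $h \in \mathcal{D}_{\mathrm{harm}}(A)$. The only point needing a word of justification is that $h$ and $H$ have the same $\mathrm{CNT}$ boundary values, and this is exactly where Theorem \ref{th:CNT_any_collar_chart} enters: the $\mathrm{CNT}$ limit at a point $p \in \Gamma$ may be computed using any collar chart $\phi$ on any collar neighbourhood of $\Gamma$, and taking $\phi$ to be a collar chart of $A$ itself we have $h \circ \phi^{-1} = H \circ \phi^{-1}$ on the whole annulus, so $h$ and $H$ possess identical non-tangential limits at every $\phi(p)$. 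Hence the $\mathrm{CNT}$ boundary values of $h$ equal $u$ quasi-everywhere, and the collar neighbourhood $A$ together with $h$ witnesses the right-hand condition.

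In short, the corollary is a formal consequence of the two cited theorems: Theorem \ref{th:Jordan_annular_enough} supplies the non-trivial content (existence, uniqueness, and the reduction of the boundary-value problem to a collar), while Theorem \ref{th:CNT_any_collar_chart} guarantees that the notion of $\mathrm{CNT}$ boundary value is insensitive to passing between a function on $\riem$ and its restriction to a collar. There is no step I expect to be difficult.
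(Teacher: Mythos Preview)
Your proposal is correct and matches the paper's own approach: the corollary is stated there as an immediate consequence of Theorems \ref{th:CNT_any_collar_chart} and \ref{th:Jordan_annular_enough}, and you have simply spelled out the two directions of that implication.
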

 
 Later, we will see that for quasicircles, $\mathcal{H}(\Gamma,\riem)$ is independent of the choice of component of the complement of $\Gamma$.

 By definition, CNT limits in a collar neighbourhood of $\mathbb{S}^1$ are non-tangential limits.  Thus 
 by Theorem \ref{th:Jordan_annular_enough} we immediately have the following.   
 \begin{corollary} \label{co:annulus_nontangential}  Let $\mathbb{A} = \{ z: r<|z|<1 \}$.  If 
  $h \in \mathcal{D}_\mathrm{harm}(\mathbb{A})$ then $h$ has non-tangential limits except possibly 
  on a Borel set of capacity zero in $\mathbb{S}^1$. 
 \end{corollary}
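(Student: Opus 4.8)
The plan is to deduce this as an immediate special case of Theorem \ref{th:Jordan_annular_enough}, applied to the standard configuration on the Riemann sphere. Concretely, I would take $R = \sphere$, $\Gamma = \mathbb{S}^1$, and $\riem = \disk$. The unit circle is an analytic Jordan curve, hence strip-cutting, and it separates $\sphere$ into $\disk$ and $\sphere \setminus \cdisk$; moreover $\mathbb{A} = \{z : r < |z| < 1\}$ is a collar neighbourhood of $\mathbb{S}^1$ in $\disk$, being bordered by $\mathbb{S}^1$ and the circle $\{|z| = r\}$, which is homotopic to $\mathbb{S}^1$. Thus Theorem \ref{th:Jordan_annular_enough} applies verbatim and asserts that any $h \in \mathcal{D}_{\mathrm{harm}}(\mathbb{A})$ has CNT boundary values quasi-everywhere on $\mathbb{S}^1$ with respect to $\disk$.

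It then remains only to unwind the two pieces of terminology ("CNT limit" and "null set") in this special configuration. The inclusion map $\iota : \mathbb{A} \hookrightarrow \mathbb{C}$ is itself a collar chart of $\mathbb{S}^1$ in the sense of the paper, since its image $\mathbb{A}$ is an annulus in $\mathbb{C}$ having $\mathbb{S}^1$ as one of its boundary circles. Hence, by Theorem \ref{th:CNT_any_collar_chart}(1), a function defined on $\mathbb{A}$ has a CNT limit at $p \in \mathbb{S}^1$ with respect to $\disk$ precisely when $h = h \circ \iota^{-1}$ has an ordinary non-tangential limit at $p$; in other words, CNT limits with respect to $\disk$ coincide with non-tangential limits. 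Likewise, by Theorem \ref{th:null_sets_both_sides} (again taking the collar chart $\iota$), a closed set $I \subset \mathbb{S}^1$ is null with respect to $\disk$ if and only if $I = \iota(I)$ has logarithmic capacity zero in $\mathbb{C}$.

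Combining these identifications with the conclusion of Theorem \ref{th:Jordan_annular_enough} yields exactly the assertion: $h$ has a non-tangential limit at every point of $\mathbb{S}^1$ outside a closed set of logarithmic capacity zero. I do not expect a genuine obstacle here; the entire content is already contained in Theorems \ref{th:Jordan_annular_enough}, \ref{th:CNT_any_collar_chart}, and \ref{th:null_sets_both_sides}, and the only thing requiring care is the bookkeeping check that the naive notions of "non-tangential limit" and "capacity zero" on $\mathbb{S}^1$ match the conformally invariant notions appearing in the general statements — which is immediate because the ambient collar chart can be taken to be the identity on $\mathbb{A}$.
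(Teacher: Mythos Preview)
Your proposal is correct and matches the paper's approach exactly: the paper deduces the corollary immediately from Theorem~\ref{th:Jordan_annular_enough} together with the observation (stated just before the corollary) that CNT limits in a collar neighbourhood of $\mathbb{S}^1$ are by definition ordinary non-tangential limits. Your version is simply more explicit about the bookkeeping, invoking Theorems~\ref{th:CNT_any_collar_chart} and~\ref{th:null_sets_both_sides} with the identity collar chart to justify the identifications that the paper treats as evident.
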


   The map from $\mathcal{D}_{\text{harm}}(A)$ to $\mathcal{D}_{\text{harm}}(\riem)$ obtained by taking boundary values and then extending to $\riem$ is bounded.  
   This fact will be useful in future applications so we record it here.  
   Let $\Gamma$ be a strip-cutting Jordan curve in $R$ which separates $R$.  Let $A$ be a collar neighbourhood of $\Gamma$ contained in one of the connected components $\riem_1$.  
  By  Theorem \ref{th:Jordan_annular_enough}  the CNT 
  boundary values of $\mathcal{D}(A)$ exist. We then define 
  \begin{align*}
   \mathfrak{G}: \mathcal{D}_{\text{harm}}(A) & \rightarrow \mathcal{D}_{\text{harm}}(\riem_1)  \\
   h & \mapsto \tilde{h}
  \end{align*}
  where $\tilde{h}$ is the unique element of $\mathcal{D}_{\text{harm}}(\riem_1)$ with boundary
  values equal to $h$.  
 
  \begin{theorem} \label{th:iota_bounded} Let $\Gamma$ be a strip-cutting Jordan curve which separates $R$, and let $\riem$ be one of the components of the complement.  Let $A$ be a 
   collar neighbourhood of $\Gamma$  in  $\riem$.  Then the associated map $\mathfrak{G}:\mathcal{D}_{\mathrm{harm}}(A) \rightarrow \mathcal{D}_{\mathrm{harm}}(\riem)$ is bounded.  
  \end{theorem}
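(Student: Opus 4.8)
The plan is to reduce to the case of an analytic $\Gamma$, where $\mathfrak{G}(h)$ is the harmonic Sobolev extension of the trace of $h$, and then to estimate via the Dirichlet principle applied to an explicit cut-off competitor.

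First I would carry out the reduction to analytic boundary. As in the proof of Theorem~\ref{th:Jordan_annular_enough}, complete $\riem$ to its double to obtain a compact surface $R'$, a subsurface $\riem' \subset R'$ bordered by an analytic strip-cutting Jordan curve $\Gamma'$, and a conformal map $f \colon \riem' \to \riem$; set $A' = f^{-1}(A)$, a collar neighbourhood of $\Gamma'$ in $\riem'$. Since the Dirichlet seminorm is conformally invariant and, by Remark~\ref{re:CNT_definition_confinv}, both the existence and the values of CNT limits are conformally invariant, the uniqueness in Theorem~\ref{th:Jordan_annular_enough} forces $\mathfrak{G}_{\riem}(h)\circ f = \mathfrak{G}_{\riem'}(h\circ f)$ for every $h \in \mathcal{D}_{\mathrm{harm}}(A)$; hence the operator norms of $\mathfrak{G}_{\riem}$ and $\mathfrak{G}_{\riem'}$ agree, and we may assume $\Gamma$ itself is analytic.

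Next, with $\Gamma$ analytic, shrink $A$ so that its inner boundary $\Gamma_2$ is analytic as well. Then any $h \in \mathcal{D}_{\mathrm{harm}}(A)$ has a Sobolev trace on $\Gamma$, and by Theorem~\ref{th:analytic_annular_enough}, $H := \mathfrak{G}(h)$ is exactly the element of $\mathcal{D}_{\mathrm{harm}}(\riem)$ with that trace; note $\mathfrak{G}(h+c) = \mathfrak{G}(h) + c$, so $d\mathfrak{G}(h)$ depends only on $h$ modulo constants. Fix a compact annular region $K$ compactly contained in $A$ and separated from both $\Gamma$ and $\Gamma_2$, normalize $h$ so that $\int_K h = 0$, and choose a smooth $\eta$ on $R$ which is $\equiv 1$ on a sub-collar of $\Gamma$, is supported in a collar of $\Gamma$ whose closure lies in $A$, and has $d\eta$ supported in $K$; extend $v := \eta h$ by zero across the rest of $\riem$. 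Then $dv = \eta\,dh + h\,d\eta$, so $\|\eta\,dh\|_{L^2(\riem)} \le \|dh\|_{A}$ and $\|h\,d\eta\|_{L^2(\riem)} \le C\|h\|_{L^2(K)} \le C\|dh\|_{A}$ by the Poincar\'e inequality on $K$ together with the normalization; moreover $\|v\|_{L^2(\riem)} \le \|h\|_{L^2(A)} \le C\|dh\|_{A}$, using that $\mathcal{D}_{\mathrm{harm}}(A) \subset H^1(A)$ (Proposition~\ref{equiv and sobolevs in the plane}, after a collar chart) together with the Poincar\'e--Wirtinger inequality and the normalization. Since $\eta \equiv 1$ near $\Gamma$, $v$ has the same trace as $h$, hence the same trace as $H$; as $H \in \mathcal{D}_{\mathrm{harm}}(\riem) \subset H^1(\riem)$ (Theorem~\ref{equiv and sobolevs in riem}) is harmonic, Lemma~\ref{Dirichlets princip} yields $\|H\|_{H^1(\riem)} \le C\|v\|_{H^1(\riem)} \le C\|dh\|_{A}$, and therefore $\|d\mathfrak{G}(h)\|_{\riem} = \|dH\|_{\riem} \le \|H\|_{H^1(\riem)} \le C\|dh\|_{A}$, which is the asserted boundedness in the Dirichlet seminorm.

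The hard part is the $L^2$ control of the competitor $v$ up to the curve $\Gamma$: one must know that a Dirichlet-bounded harmonic function on the collar is automatically square integrable up to $\Gamma$, with the quantitative bound $\|h\|_{L^2(A)} \le C\|dh\|_{A}$ valid after normalizing $h$ by a constant — and this is precisely where the analyticity (hence Lipschitz character) of $\Gamma$ obtained in the reduction is used, via Proposition~\ref{equiv and sobolevs in the plane}. Once this is secured, the remainder is the standard Dirichlet principle (Lemma~\ref{Dirichlets princip}). If one prefers to bypass the $L^2$ estimate near $\Gamma$ altogether, one may instead invoke the seminorm form of the Dirichlet principle, namely $\|dH\|_{\riem} \le \|dv\|_{\riem}$ for any $v \in \dot{H}^1(\riem)$ with the same trace as $H$, applied to $v = \eta h$: then only $\|dv\|_{\riem} \le \|dh\|_{A} + C\|h\|_{L^2(K)} \le C\|dh\|_{A}$ is needed, which requires only the Poincar\'e inequality on the fixed compact set $K$ together with the constant-normalization.
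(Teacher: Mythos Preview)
Your argument is correct and shares the paper's two preliminary moves: shrink $A$ so the inner boundary is analytic, and pass to the double so that $\Gamma$ is analytic as well, whence $\mathfrak{G}h$ is the harmonic Sobolev extension of the trace of $h$ (Theorem~\ref{th:analytic_annular_enough}).  From that point the two proofs diverge.  The paper simply chains the trace theorem $\|h|_\Gamma\|_{H^{1/2}(\Gamma)} \le C\|h\|_{H^1(A)}$ (Taylor, Proposition~4.5) with the harmonic extension estimate $\|H\|_{H^1(\riem)} \le C\|h|_\Gamma\|_{H^{1/2}(\Gamma)}$ (Taylor, Proposition~1.7), together with the equivalence of the $H^1$ and Dirichlet (semi)norms from Theorem~\ref{equiv and sobolevs in riem}; no competitor is built and the boundary space $H^{1/2}(\Gamma)$ appears explicitly.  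Your route instead constructs a cut-off competitor $v=\eta h$ with the same trace, controls $\|v\|_{H^1(\riem)}$ via Poincar\'e on the fixed compact $K$ (and, in the first variant, via the $\dot H^1 = H^1$ equivalence on the analytically bounded collar), and then invokes Lemma~\ref{Dirichlets princip}.  This is slightly more hands-on but has the advantage that it never needs the intermediate Sobolev space $H^{1/2}(\Gamma)$ or the harmonic extension bound; in your alternative variant using the seminorm Dirichlet principle $\|dH\|_{\riem}\le \|dv\|_{\riem}$, one does not even need the $L^2$ control of $h$ up to $\Gamma$.  Either way the conclusion is the same.
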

  \begin{proof} 
  The boundedness estimates that we are aiming to prove is $\Vert \mathfrak{G}(A,\Omega)h \Vert_{\mathcal{D}_{\mathrm{harm}}(\riem)}\leq C \Vert h\Vert_{\mathcal{D}_{\mathrm{harm}}(A)} $, which is valid for $h\in \mathcal{D}_{\mathrm{harm}}(A).$ First, observe that we can assume that the inner boundary of $A$ is analytic.  To see this, let $A' \subseteq A$ be collar neighbourhood whose inner boundary is analytic.  Since $\| \left. h \right|_{A'} \|_{\mathcal{D}_{\text{harm}}(A')} \leq \| h\|_{\mathcal{D}_{\text{harm}}(A)}$, it is enough to show that $\mathfrak{G}$ is bounded 
   with respect to the $\mathcal{D}_{\text{harm}}(A')$ norm.  
  
  Next, observe that because CNT boundary values and the Dirichlet norms are conformally invariant, it is enough to prove this for an analytic strip-cutting curve $\Gamma$, and
  this can be arranged for example by embedding $\riem$ in its double.  Thus, we can
  assume that $\partial A'$ is analytic.   
 Furthermore, the existence of the solution to the Dirichlet problem with boundary data in Sobolev spaces (see e.g. Proposition 4.5 on page 334 in \cite{Taylor})  and the fact that $\Gamma \subsetneq \partial A$, yields that
  \[  \| \left. h \right|_{\Gamma} \|_{H^{1/2}(\Gamma)} \leq  \| \left. h \right|_{\partial \Omega} \|_{H^{1/2}(\partial A)}  \leq C_1 \| h \|_{H^1(A)}. \]
  
  Also, the harmonic Sobolev extension $H$ of $\left. h \right|_{\Gamma}$ satisfies 
  \[  \| H \|_{{H}^1(\riem)} \leq C_2 \| \left. h \right|_{\Gamma} \|_{H^{1/2}(\Gamma)}  \]
  (see e.g. Proposition 1.7 on page 360 in \cite{Taylor}). This together with the estimate for $\Vert h|_\Gamma \Vert_{H^{1/2}(\Gamma)}$ above yields that 
  \begin{equation}\label{Sobolev estimate for bounce}
  \Vert  \mathfrak{G}(A,\riem)h \Vert _{H^1(\riem)} \leq  \Vert h\Vert _{H^1(A)}.
  \end{equation}
  Now if
  one applies \eqref{Sobolev estimate for bounce} to the harmonic function $h-h_{A}$ where $h_{A}$ is the average of $h$ given by $\frac{1}{|A|} \int_{A} h$, then one has that
$$\Vert \mathfrak{G}(A,\Omega)h-\mathfrak{G}(A,\Omega)h_{A}\Vert_{H^1(\riem)} \leq C \Vert h-h_{A}\Vert_{H^1(A)}. $$
Moreover we know that
$$\Vert \mathfrak{G}(A,\Omega)h \Vert_{\mathcal{D}_{\mathrm{harm}}(\riem)}= \Vert \mathfrak{G}(A,\Omega)h-\mathfrak{G}(A,\Omega)h_{A}\Vert_{\mathcal{D}_{\mathrm{harm}}(\riem)} \leq \Vert \mathfrak{G}(A,\Omega)h-\mathfrak{G}(A,\Omega)h_{A}\Vert_{H^1(\riem)}$$ and that $$\Vert h-h_{A}\Vert_{H^1(A)}= \Vert h-h_{A}\Vert_{\mathcal{D}_{\mathrm{harm}}(A)}+ \Vert h-h_{A}\Vert_{L^2(A)}\leq \Vert h\Vert_{\mathcal{D}_{\mathrm{harm}}(A)}+ \Vert h\Vert_{\mathcal{D}_{\mathrm{harm}}(A)}= 2\Vert h\Vert_{\mathcal{D}_{\mathrm{harm}}(A)},$$ where the inequality $ \Vert h-h_{A}\Vert_{L^2(A)}\leq C \Vert h\Vert_{\mathcal{D}_{\mathrm{harm}}(A)}$ is nothing but the Poincar\'e-Wirtinger inequality. This yields the estimate $\Vert \mathfrak{G}(A,\Omega)h \Vert_{\mathcal{D}_{\mathrm{harm}}(\riem)}\leq 2C \Vert h\Vert_{\mathcal{D}_{\mathrm{harm}}(A)} ),$ and completes the proof.
  \end{proof}

\end{subsection}
\begin{subsection}{Transmission in quasicircles}

 In this section we show that if $\Gamma$ is a quasicircle in a compact surface $R$, and $\riem_1$ and $\riem_2$ are the connected components of $R \backslash \Gamma$, then  
 the CNT boundary values quasieverywhere of any $h_1 \in \mathcal{D}_{\text{harm}}(\riem_1)$ are also CNT boundary values of a unique $h_2 \in \mathcal{D}_{\text{harm}}(\riem_2)$ quasieverywhere.  This ``transmission'' of $h_2$ is bounded with respect to the Dirichlet semi-norm.  
 
 We first show the result for analytic Jordan curves $\Gamma$.
 
 \begin{theorem}  \label{th:trans_exists_analytic} Let $R$ be a compact Riemann surface and $\Gamma$ be an analytic Jordan curve separating 
  $R$ into components $\riem_1$ and $\riem_2$.  Then $\mathcal{H}(\Gamma,\riem_1)= \mathcal{H}(\Gamma,\riem_2)$.  That is, for every $h_1 \in \mathcal{D}_{\mathrm{harm}}(\riem_1)$, there is an $h_2 \in \mathcal{D}_{\mathrm{harm}}(\riem_2)$ so that the $\mathrm{CNT}$ boundary values of $h_1$ and $h_2$ agree quasieverywhere on $\Gamma$.
  Furthermore, $h_2$ is unique and given by the harmonic Sobolev extension of the trace of $h_1$ in $H^{1/2}(\Gamma)$. 
 \end{theorem}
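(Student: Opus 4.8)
The plan is to transport the boundary data across $\Gamma$ by an analytic reflection and then invoke Theorem \ref{th:analytic_annular_enough} on the $\riem_2$-side. Since $\Gamma$ is analytic it is in particular strip-cutting, so first I would fix a doubly-connected neighbourhood $A$ of $\Gamma$ with a doubly-connected chart $\psi\colon A\to\mathbb{C}$ which, by the usual shrinking together with Schwarz reflection (as $\psi(\Gamma)$ is an analytic Jordan curve, which we may normalise to be $\mathbb{S}^1$), may be taken conformal on a full neighbourhood of $\overline{A}$ with $\psi(\Gamma)=\mathbb{S}^1$ and $\psi(A)=\{\,1/\rho<|z|<\rho\,\}$ for some $\rho>1$. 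Writing $A_i=A\cap\riem_i$, we then have $\psi(A_1)=\{\,1/\rho<|z|<1\,\}$, $\psi(A_2)=\{\,1<|z|<\rho\,\}$, and each $\psi|_{A_i}$ is a collar chart for $\Gamma$ in $\riem_i$ carrying $\Gamma$ onto $\mathbb{S}^1$.

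Given $h_1\in\mathcal{D}_{\mathrm{harm}}(\riem_1)$, set $u=h_1\circ\psi^{-1}\in\mathcal{D}_{\mathrm{harm}}(\{\,1/\rho<|z|<1\,\})$ and define its reflection $v(z)=u(1/\bar{z})$ on $\{\,1<|z|<\rho\,\}$. Then $v$ is harmonic, being the composition of the harmonic function $u$ with the antiholomorphic map $z\mapsto 1/\bar z$, and it is Dirichlet-bounded because the Dirichlet inner product is invariant under the anticonformal diffeomorphism $z\mapsto 1/\bar z$ between these two annuli; hence $h_2^{\flat}:=v\circ\psi\in\mathcal{D}_{\mathrm{harm}}(A_2)$. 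The crucial observation is that $h_2^{\flat}$ has the same CNT boundary function on $\Gamma$ as $h_1$. Indeed, by Theorem \ref{th:CNT_any_collar_chart} the CNT boundary values of $h_1$ with respect to $\riem_1$ are the non-tangential limits of $u$ on $\mathbb{S}^1$ (computed in the collar chart $\psi|_{A_1}$), and those of $h_2^{\flat}$ with respect to $\riem_2$ are the non-tangential limits of $v$ on $\mathbb{S}^1$ (computed in $\psi|_{A_2}$); since $z\mapsto 1/\bar z$ fixes $\mathbb{S}^1$ pointwise and carries a Stolz angle at $\zeta\in\mathbb{S}^1$ in $\{|z|>1\}$ onto a Stolz angle at $\zeta$ in $\{|z|<1\}$, these two families of non-tangential limits coincide wherever either exists. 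The exceptional set $I\subset\Gamma$ on which $h_1$ lacks CNT limits is null with respect to $\riem_1$, which by definition (together with Theorem \ref{th:null_sets_both_sides}) means $\psi(I)\subset\mathbb{S}^1$ has logarithmic capacity zero; as $\psi|_{A_2}$ is a collar chart in $\riem_2$ and $\psi|_{A_2}(I)=\psi(I)$, the same set $I$ is null with respect to $\riem_2$ (alternatively invoke Theorem \ref{th:quasicircle_null_eitherside}, $\Gamma$ being a quasicircle). Thus $h_1$ and $h_2^{\flat}$ have the same CNT boundary values quasi-everywhere on $\Gamma$.

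Next I would apply Theorem \ref{th:analytic_annular_enough} with the component $\riem_2$, the collar neighbourhood $A_2$, and the function $h_2^{\flat}$: it yields a unique $h_2\in\mathcal{D}_{\mathrm{harm}}(\riem_2)$ whose CNT boundary values agree quasi-everywhere with those of $h_2^{\flat}$, hence with those of $h_1$. This shows the boundary function of $h_1$ lies in $\mathcal{H}(\Gamma,\riem_2)$, so $\mathcal{H}(\Gamma,\riem_1)\subseteq\mathcal{H}(\Gamma,\riem_2)$; reflecting in the other direction gives the reverse inclusion, whence $\mathcal{H}(\Gamma,\riem_1)=\mathcal{H}(\Gamma,\riem_2)$, and uniqueness of $h_2$ is part of the conclusion of Theorem \ref{th:analytic_annular_enough}. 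For the final assertion, that same theorem identifies $h_2$ with the harmonic Sobolev extension of the Sobolev trace of $h_2^{\flat}$ in $H^{1/2}(\Gamma)$; since $v(z)=u(1/\bar z)$ has the same trace on $\mathbb{S}^1$ as $u$ (the reflection is bi-Lipschitz and fixes $\mathbb{S}^1$ pointwise, and Sobolev traces are local) and $\psi$ is conformal across $\Gamma$, the Sobolev trace of $h_2^{\flat}$ on $\Gamma$ equals that of $h_1$; therefore $h_2$ is the harmonic Sobolev extension of the trace of $h_1$, as stated.

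The only genuinely delicate point is the compatibility bookkeeping of the middle paragraph: checking that the notions ``CNT with respect to $\riem_1$'' and ``CNT with respect to $\riem_2$'', and the two a priori distinct notions of null set on $\Gamma$, are transported correctly by the single analytic chart $\psi$ and the plane reflection. All of this reduces to elementary facts about $z\mapsto 1/\bar z$ near $\mathbb{S}^1$ — anticonformality, preservation of Stolz angles, preservation of logarithmic capacity and of $H^{1}$ traces — once Theorems \ref{th:CNT_any_collar_chart}, \ref{th:null_sets_both_sides}, \ref{th:quasicircle_null_eitherside} and \ref{th:analytic_annular_enough} are in hand; the substantive analytic content has already been absorbed into Theorem \ref{th:analytic_annular_enough}.
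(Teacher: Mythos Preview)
Your proof is correct and follows essentially the same route as the paper: fix a doubly-connected chart sending $\Gamma$ to $\mathbb{S}^1$, reflect $h_1\circ\psi^{-1}$ across $\mathbb{S}^1$ via $z\mapsto 1/\bar z$ to obtain a Dirichlet-bounded harmonic function on the outer half-annulus with the same non-tangential boundary values, pull back to $A_2$, and apply Theorem \ref{th:analytic_annular_enough}. You are in fact more explicit than the paper on two points it leaves to the reader, namely the null-set bookkeeping across the two sides and the identification of the Sobolev trace of the reflected function with that of $h_1$.
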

 \begin{proof}  First observe that $\Gamma$ is in particular a quasicircle, so a set is null with respect to $\riem_1$ if and only if it is null with respect to $\riem_2$ by Theorem \ref{th:null_sets_both_sides}.  
 
 Since $\Gamma$ is an analytic curve, by definition there is a doubly connected neighbourhood $U$ of 
 $\Gamma$ and a chart $\phi:U \rightarrow \mathbb{A}$ such that $\phi(\Gamma) = \mathbb{S}^1$.  We may choose 
 $\mathbb{A}$ so that it is an annulus $\mathbb{A} = \{ z : r <|z|<1/r \}$ for some $r<1$, and let $A_1 = U \cap \riem_1$ and $A_2 = U \cap \riem_2$.    
 
 Let $h_1 \in \mathcal{D}_{\text{harm}}(\riem_1)$.  Now $h_1$ has CNT boundary values quasieverywhere, and therefore $\left. h_1 \circ \phi \right|_{A_1}$ has non-tangential boundary values on $\mathbb{S}^1$ by definition, and also $h_1$ is in  $\mathcal{D}_{\mathrm{harm}}(r<|z|<1)$.  Hence $u(z)=h_1 \circ \phi (1/\bar{z})$ is a harmonic function on $\mathcal{D}_{\mathrm{harm}}(1<|z|<1/r)$.  Now since $z \mapsto 1/\bar{z}$ takes wedges inside $\disk$ to wedges inside $\text{cl} (\disk^{c})$, one readily sees that $u$ has non-tangential boundary values agreeing with those of $h_1 \circ \phi$, everywhere the latter boundary values exist.  Thus $u \circ \phi^{-1} \in \mathcal{D}_{\mathrm{harm}}(A_2)$ and $u \circ \phi^{-1}$ has CNT boundary values agreeing with those of $h_1$.  
 
 The claim now follows by applying Theorem \ref{th:analytic_annular_enough} to $u \circ \phi^{-1}$.  
\end{proof}
 
 For quasicircles, Theorem \ref{th:trans_exists_analytic} motivates the following definition for analytic Jordan curves 
 separating $R$ into $\riem_1$ and $\riem_2$.
 \begin{definition}
We define the transmission operator 
 \[  \mathfrak{O}(\riem_1,\riem_2):\mathcal{D}_\mathrm{harm}(\riem_1) \rightarrow \mathcal{D}_\mathrm{harm}(\riem_2)  \]
through an analytic Jordan curve $\Gamma$, to be the function which takes $h \in \mathcal{D}_\mathrm{harm}(\riem_1)$ to the harmonic function on $\mathcal{D}_\mathrm{harm}(\riem_2)$ with the same conformally radial boundary values.  Define
 \[  \mathfrak{O}(\riem_2,\riem_1):\mathcal{D}_\mathrm{harm}(\riem_2) \rightarrow \mathcal{D}_\mathrm{harm}(\riem_1) \]
 similarly.
\end{definition}
 Below, we show that this is well-defined for quasicircles.

 Next we investigate the boundedness of the transmission operator in various settings (related to the regularity of $\Gamma$).
 \begin{theorem}\label{th:analytic_transmission_bounded}
  Let $\Gamma$ be an analytic strip-cutting Jordan curve in a compact Riemann surface $R$.  Assume that $\Gamma$ 
  separates $R$ into connected components $\riem_1$ and $\riem_2$.  Then 
  $\mathfrak{O}(\riem_1,\riem_2):\mathcal{D}_{\mathrm{harm}} (\riem_1)\rightarrow \mathcal{D}_{\mathrm{harm}} (\riem_2)$ is bounded.  
 \end{theorem}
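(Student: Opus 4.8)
The plan is to observe that, for an analytic curve $\Gamma$, this is the same argument as in Theorem~\ref{th:iota_bounded}: the transmission operator was identified in Theorem~\ref{th:trans_exists_analytic} with a harmonic Sobolev extension, so it factors through a chain of bounded maps between Sobolev spaces. Since $\Gamma$ is analytic, both $\riem_1$ and $\riem_2$ are bordered Riemann surfaces whose common boundary $\Gamma$ is a smooth embedded submanifold of $R$, so the trace and extension results recalled in Section~\ref{se:Dirichlet_Sobolev} apply to each of them.

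Concretely, I would assemble the following four estimates. Given $h_1 \in \mathcal{D}_{\mathrm{harm}}(\riem_1)$, Theorem~\ref{equiv and sobolevs in riem} gives $h_1 \in H^1(\riem_1)$ with $\|h_1\|_{H^1(\riem_1)}$ controlled by $\|h_1\|_{\mathcal{D}_{\mathrm{harm}}(\riem_1)}$ (up to the additive-constant issue noted below). The Sobolev trace theorem (see Proposition~4.5 on page~334 in \cite{Taylor}) gives $h_1|_\Gamma \in H^{1/2}(\Gamma)$ with $\|h_1|_\Gamma\|_{H^{1/2}(\Gamma)} \le C\|h_1\|_{H^1(\riem_1)}$. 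By Theorem~\ref{th:trans_exists_analytic}, $h_2 := \mathfrak{O}(\riem_1,\riem_2)h_1$ is exactly the harmonic Sobolev extension to $\riem_2$ of $h_1|_\Gamma$, so boundedness of the harmonic extension operator (see Proposition~1.7 on page~360 in \cite{Taylor}) gives $\|h_2\|_{H^1(\riem_2)} \le C\|h_1|_\Gamma\|_{H^{1/2}(\Gamma)}$. Finally $\|h_2\|_{\mathcal{D}_{\mathrm{harm}}(\riem_2)} = \|h_2\|_{\dot{H}^1(\riem_2)} \le \|h_2\|_{H^1(\riem_2)}$. Composing these four estimates yields $\|\mathfrak{O}(\riem_1,\riem_2)h_1\|_{\mathcal{D}_{\mathrm{harm}}(\riem_2)} \le C\|h_1\|_{\mathcal{D}_{\mathrm{harm}}(\riem_1)}$.

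The only point needing care is that the target is the Dirichlet \emph{semi}-norm, whereas the intermediate $H^1$ norms are genuine norms, so the mean value of $h_1$ cannot be controlled by $\|h_1\|_{\mathcal{D}_{\mathrm{harm}}(\riem_1)}$ alone. This is harmless: $\mathfrak{O}(\riem_1,\riem_2)$ intertwines addition of constants, $\mathfrak{O}(\riem_1,\riem_2)(h_1 + c) = \mathfrak{O}(\riem_1,\riem_2)h_1 + c$, and neither Dirichlet semi-norm sees constants, so one first subtracts the mean $\bar h_1$ of $h_1$ over $\riem_1$ and applies Poincar\'e's inequality (valid since $\riem_1$ is bounded with analytic boundary in $R$) to bound $\|h_1 - \bar h_1\|_{H^1(\riem_1)} \le C\|h_1\|_{\mathcal{D}_{\mathrm{harm}}(\riem_1)}$, then runs the chain above on $h_1 - \bar h_1$. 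Beyond this bookkeeping there is no real obstacle; the substance is already in Theorem~\ref{th:trans_exists_analytic} (the transmission is a Poisson/Sobolev extension), in Theorem~\ref{equiv and sobolevs in riem} (Dirichlet-boundedness forces $H^1$-membership), and in the standard trace and extension estimates on smooth bordered surfaces.
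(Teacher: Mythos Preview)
Your proposal is correct and follows essentially the same route as the paper's proof: factor the transmission through the chain $\mathcal{D}_{\mathrm{harm}}(\riem_1)\to H^1(\riem_1)\to H^{1/2}(\Gamma)\to H^1(\riem_2)\to \mathcal{D}_{\mathrm{harm}}(\riem_2)$ using Theorem~\ref{equiv and sobolevs in riem}, the Sobolev trace theorem, and the boundedness of the harmonic extension operator, together with the identification from Theorem~\ref{th:trans_exists_analytic} of $\mathfrak{O}(\riem_1,\riem_2)h_1$ with the harmonic Sobolev extension of $h_1|_\Gamma$. Your extra paragraph handling the semi-norm versus norm issue (subtracting the mean and using Poincar\'e) is a point the paper glosses over, so your version is in fact slightly more careful.
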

 \begin{proof} 
  Note that in contrast to the proof of Theorem \ref{Thm:main boundedness of transmission}, here the assumption that the curve is analytic cannot be removed by conformal invariance, since a conformal map of one side of $\Gamma$ does not preserve CNT boundary values taken from the other.   
 
  The transmission procedure is as follows.  
 Start with $h_1\in \mathcal{D}_\mathrm{harm}(\Sigma_1)$ which by Theorem \ref{equiv and sobolevs in riem} is also in $H^{1}(\Sigma_1)$, so we can restrict it to the boundary to obtain a trace which is in $H^{1/2}(\Gamma)$. 
 Furthermore  
\begin{equation}\label{trace estim 3}
\Vert h_1|_{\Gamma} \Vert_{H^{1/2}(\Gamma)}\leq C\Vert h_1\Vert _{H^ 1(\riem_1)}.
\end{equation}
Now solve the Dirichlet problem with $h_1|_\Gamma$ as Dirichlet data to obtain a unique harmonic function $h_2:= \mathfrak{O}(\riem_1,\riem_2) h_1$ in $H^1 (\Sigma_2)$ that verifies 
\begin{equation}\label{elliptic estim 3}
\Vert h_2\Vert_{H^1(\riem_2)} \leq C\Vert h_1|_\Gamma \Vert_{H^{1/2}(\Gamma)},
\end{equation} 
see e.g. Proposition 1.7 on page 360 in \cite{Taylor}.

Finally combining the estimates \eqref{trace estim 3} and \eqref{elliptic estim 3} yields that

\begin{equation}\label{trace is bounded}
\Vert \mathfrak{O}(\riem_1,\riem_2) h_1\Vert_{H^1(\riem_2)}\leq C  \Vert h_1\Vert_{H^1(\riem_1)}.
\end{equation} 

Now for a compact Riemannian manifold $M$, with or without boundary, one has for functions $f\in H^1(M)$ with $\int_{M} f=0$ the Poincar\'e inequality $\int_{M} |f|^2 \leq C \int_{M} |\nabla f|^2$, see e.g. \cite{Li} page 75 estimates (1.6) and (1.7). Therefore if $\int_{M} f\neq 0$ then one can apply the aforementioned Poincar\'e inequality to $g:= f-f_M$ with $f_{M}=\frac{1}{|M|}\int_{M} f$ for which one has $\int_M g=0.$ This yields that $\int_{M} |f-f_M|^2 \leq C \int_{M} |\nabla f|^2$ for $f\in H^1(M)$. Now   applying estimate \eqref{trace is bounded} to the harmonic function $h_{1}- h_{1\Sigma_{1}}$ and follow the same reasoning as in the proof of Theorem \ref{th:iota_bounded}, one readily sees that $\Vert \mathfrak{O}(\riem_1,\riem_2) h_1\Vert_{\mathcal{D}_\mathrm{harm}(\riem_2)}\leq C  \Vert h_1\Vert_{\mathcal{D}_\mathrm{harm}(\riem_1)}.$ 
\end{proof}
 
 To extend this result to quasicircles, we require the following Lemma.

 \begin{lemma} \label{le:quasicircle_straighten}  Let $R$ be a compact surface and 
  $\Gamma$ be a quasicircle, separating $R$ into components $\riem_1$ and $\riem_2$.  Let $\Omega$ be a collar neighbourhood of $\Gamma$ in $\riem_2$. There is a Riemann surface $S$ and a quasiconformal map
  $f:R \rightarrow S$ such that $f$ is conformal on the complement of the closure of $\Omega$ and $f(\Gamma)$ is an analytic Jordan curve.
 \end{lemma}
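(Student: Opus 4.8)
The plan is to transplant, by means of a Beltrami differential supported in $\Omega$, a quasiconformal straightening of the planar quasicircle $\psi(\Gamma)$ onto $R$, and then pass to the Riemann surface $S$ determined by the resulting complex structure. Concretely, by the definition of quasicircle there is a doubly-connected chart $\psi:U \to V \subset \mathbb{C}$ for $\Gamma$ with $\gamma := \psi(\Gamma)$ a quasicircle in $\mathbb{C}$. Shrinking $U$, I would arrange that $\mathrm{cl}(U \cap \riem_2) \subset \Omega$, and --- composing $\psi$ with $z \mapsto 1/z$ if necessary --- that $\psi(U\cap\riem_1)$ lies in the bounded component $\Omega^+$ of $\mathbb{C}\setminus\gamma$ while $\psi(U\cap\riem_2)$ lies in the unbounded component $\Omega^-$. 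Since $\gamma$ is a quasicircle, $\Omega^+$ is a quasidisk, so a Riemann map $h:\Omega^+\to\disk$ extends to a quasiconformal map $G:\sphere\to\sphere$; this $G$ is conformal on $\Omega^+$, satisfies $G(\gamma)=\mathbb{S}^1$, and its Beltrami coefficient $\mu_G$ vanishes almost everywhere on $\Omega^+$.

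Next I would define a Beltrami differential on $R$ by $\mu = \psi^*\mu_G$ on $U$ and $\mu = 0$ on $R\setminus U$. Since $\psi$ is conformal we have $\|\mu\|_\infty = \|\mu_G\|_\infty < 1$, so $\mu$ determines a new complex structure on the underlying surface of $R$; let $S$ be the corresponding Riemann surface and let $f:R\to S$ be the identity map, which is then quasiconformal with Beltrami coefficient $\mu$. Because $\mu_G$ vanishes almost everywhere on $\Omega^+$, $\mu$ vanishes almost everywhere on $U\cap\riem_1$ and hence on all of $\riem_1$; and because $U\cap\riem_2 \subset \mathrm{cl}\,\Omega$, $\mu$ also vanishes on $\riem_2\setminus\mathrm{cl}\,\Omega$. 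Thus $\mu$ is supported in $\mathrm{cl}\,\Omega$, so on the open set $R\setminus\mathrm{cl}\,\Omega$ the map $f$ is $1$-quasiconformal and therefore conformal.

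It then remains to check that $f(\Gamma)$ is analytic. On $U$ the maps $f|_U$ and $G\circ\psi|_U$ have the same Beltrami coefficient $\psi^*\mu_G$, so $f|_U\circ(G\circ\psi)^{-1}$ is $1$-quasiconformal, hence conformal, on $(G\circ\psi)(U) = G(V)$, which is a doubly-connected neighbourhood of $G(\gamma)=\mathbb{S}^1$. A conformal map defined on a doubly-connected neighbourhood of $\mathbb{S}^1$ carries $\mathbb{S}^1$ to an analytic Jordan curve, and since $f(\Gamma) = \left(f|_U\circ(G\circ\psi)^{-1}\right)(\mathbb{S}^1)$, this shows $f(\Gamma)$ is an analytic Jordan curve in $S$, completing the argument.

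The step I expect to be the main obstacle is producing the planar straightening $G$ that is conformal on the $\riem_1$-side of $\gamma$: this is exactly what forces $f$ to remain conformal on all of $\riem_1$, and it relies on the characterization of quasicircles as boundaries of quasidisks (equivalently, on the quasiconformal reflection in $\gamma$). The remaining care points are the bookkeeping with the chart domain $U$ --- ensuring $\operatorname{supp}\mu \subset \overline{\Omega}$ while still covering $\Gamma$ --- and the routine facts about Beltrami equations on compact surfaces (existence of $f$, and conformality of a composition of quasiconformal maps with equal Beltrami coefficients), which I would simply quote.
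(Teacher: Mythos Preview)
Your argument is correct and takes a genuinely different route from the paper's proof. The paper proceeds by an explicit cut-and-sew construction: it maps a doubly-connected neighbourhood $A$ of $\Gamma$ conformally onto a round annulus $\mathbb{A}_{r,1/r}$, builds by hand a quasiconformal self-map $\phi$ of that annulus which is conformal on the $\riem_1$-side, carries $\psi(\Gamma)$ to $\mathbb{S}^1$, and is the identity on the outer boundary circle, and then re-glues the pieces $R_1$, $\mathbb{A}_{s,1/r}$, $R_2$ along the seams using the quasisymmetric sewing theorem of Radnell--Schippers. Your approach instead pulls back the Beltrami coefficient of a global planar straightening $G$ through the doubly-connected chart $\psi$, and lets the measurable Riemann mapping theorem on the compact surface produce $S$ and $f$ in one stroke. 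This is cleaner and avoids the sewing machinery entirely; the paper's construction, on the other hand, is more explicit about exactly where the non-conformality lives and yields concrete local coordinates for $S$ directly.

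One small correction: you cannot literally have $\mathrm{cl}(U\cap\riem_2)\subset\Omega$, since $\Gamma$ lies in that closure but only in $\partial\Omega$. What you need (and what your argument actually uses) is $U\cap\riem_2\subset\Omega$, which gives $\operatorname{supp}\mu\subset\mathrm{cl}(U\cap\riem_2)\subset\mathrm{cl}\,\Omega$; this is enough to conclude $\mu=0$ on $R\setminus\mathrm{cl}\,\Omega$ and hence that $f$ is conformal there.
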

 \begin{proof}
 Let $A$ be a doubly-connected neighbourhood of 
  $\Gamma$ such that the closure of $A$ is contained in $\Omega$.  Let $\Gamma_1$ and $\Gamma_2$ be the boundaries of $A$ in $\riem_1$ and 
  $\riem_2$ respectively; these can be taken as regular as desired (say, analytic Jordan curves).
  Denote $\mathbb{A}_{r,s} = \{ z\,:\, r<|z|<s \}$.  
  Let $\psi:A \rightarrow \mathbb{A}_{r,1/r}$ be a biholomorphism (which exists for some value of $r \in (0,1)$  by a classical canonical mapping theorem \cite{Neharibook}).  If $\Gamma_1$ and $\Gamma_2$ 
  are analytic, then there is a biholomorphic extension of $\psi$ to an open neighbourhood of the closure of $A$ 
  taking $\Gamma_i$ to the boundary curves; for definiteness, we assume that $\Gamma_1$ maps to $|z|=r$ and $\Gamma_2$ maps to $|z|=1/r$ (which can of course be arranged by composing $\psi$ by $z \mapsto 1/z$).  
  Let $E_1$ denote the region bounded by $\psi(\Gamma_1)$ and $\psi(\Gamma)$, and $E_2$ denote the region 
  bounded by $\psi(\Gamma_2)$ and $\psi(\Gamma)$.  
  
  We claim that there is a map $\phi:\mathbb{A}_{r,1/r}$ to an annulus $\mathbb{A}_{s,1/r}$ for some 
  $0<s < 1 <1/r$ with the following properties:
  \begin{enumerate}
   \item $\phi$ is quasiconformal
   \item $\phi$ takes $\psi(\Gamma)$ onto $\mathbb{S}^1$;
   \item $\phi$ is a biholomorphism from $E_1$ onto $\mathbb{A}_{s,1}$;
   \item $\phi$ has an analytic extension to an open neighbourhood of $|z|=r$;
   \item $\phi$ is the identity on $|z|=1/r$.  
  \end{enumerate}
  To see this, again applying the classical canonical mapping theorem, there is a biholomorphic map $G_1:E_1 \rightarrow \mathbb{A}_{s,1}$ for some $s \in (0,1)$.  This has a homeomorphic extension from $\Gamma$ 
  to $\mathbb{S}^1$ by Lemma \ref{le:continuous_extension_collar}.  	
  By \cite[Corollary 4.1]{Radnell_Schippers_monster}, there is a quasiconformal map $G_2:E_2 \rightarrow 
  \mathbb{A}_{1,1/r}$ which equals $G_1$ on $\psi(\Gamma)$ and the identity on $|z|=1/r$.  
 Set
  \[  \phi(z) = \left\{ \begin{array}{cc}  G_1(z) & z \in E_1 \cup \Gamma \\ G_2(z) & z \in E_2.    \end{array} \right. \]
   Since $\phi$ is a homeomorphism and $\Gamma$ has measure zero, $\phi$ is quasiconformal.
  All the properties then follow immediately, except (4), which is a consequence of the Schwarz reflection principle.
  
  We now construct the surface $S$ by sewing as follows.  Remove the closure of $A$ from $R$ to obtain 
  two disjoint Riemann surfaces $R_1$ and $R_2$.  Join $\mathbb{A}_{s,1/r}$ to $R_1$ and $R_2$, using 
  $\left.\psi \circ \phi \right|_{\Gamma_1}$ to identify points on $\Gamma_1$ with their image on $|z|=s$, 
  and $\left. \psi \circ \phi \right|_{\Gamma_2}$ to identify points on $\Gamma_2$ with 
  their image on $|z|=1/r$.  The resulting $S$ is a Riemann surface with a unique complex structure 
  compatible with that on $R_1$, $\mathbb{A}_{s,1/r}$, and $R_2$ by  \cite[Theorem 3.3]{Radnell_Schippers_monster}.  (For analytic curves and parametrizations this is standard, see e.g. \cite{Ahlfors_Sario}; however to sew $\Gamma_2$ one needs the stronger result).  Observe that 
  $\psi \circ \phi (\Gamma)$ is an analytic curve in $S$, which can be seen by using the identity map as a chart.
  
  The map from $R$ to $S$ given by 
  \[  f(z) =  \left\{  \begin{array}{cc}  z & z \in R_1 \cup \Gamma_1 \\  
    \psi \circ \phi(z) & z \in A \\ z & z \in R_2 \cup \Gamma_2.  
  \end{array} \right.\]
  is easily checked to be well-defined by applying the equivalence relation on the seams.  It is quasiconformal on each patch, with homeomorphic extensions to $\Gamma_1$ and $\Gamma_2$ and since the seams and their images are quasicircles and thus have measure zero, $f$ is quasiconformal on $R$.  The map $f$ is clearly conformal on $R_2$, which includes $\riem_2$ minus the closure of $\Omega$. Since the identification at $\Gamma_1$ is analytic, and $\psi \circ \phi$ is conformal on $E_1$, $f$ is conformal on $\riem_1$ \cite{Ahlfors_Sario}. This proves the claim.  
 \end{proof}
  \begin{theorem} \label{th:transmission_exists_quasicircle}  Let $R$ be a compact Riemann surface, and $\Gamma$ be a quasicircle in $R$.
  Assume that $\Gamma$ separates $R$ into two components $\riem_1$ and $\riem_2$.  
  
  The following are equivalent.
  \begin{enumerate}
   \item $h \in \mathcal{H}(\Gamma,\riem_1)$;
   \item $h \in \mathcal{H}(\Gamma,\riem_2)$;
   \item $h$ is the $\mathrm{CNT}$ boundary values quasieverywhere of an element of 
   $\mathcal{D}(A)$ for some collar neighbourhood $A$ of $\Gamma$ in $\riem_1$;
   \item $h$ is the $\mathrm{CNT}$ boundary values quasieverywhere of an 
    element of $\mathcal{D}(A)$ for some collar neighbourhood $A$ of $\Gamma$ in $\riem_2$.  
  \end{enumerate}
 \end{theorem}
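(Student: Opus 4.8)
The plan is to reduce the quasicircle case to the analytic case already handled in Theorem~\ref{th:trans_exists_analytic} and Corollary~\ref{co:Jordan_annular_all_same}, by straightening $\Gamma$ via Lemma~\ref{le:quasicircle_straighten}, and to exploit the conformal invariance of CNT limits (Remark~\ref{re:CNT_definition_confinv}) together with the capacity-zero transfer result Theorem~\ref{th:quasicircle_null_eitherside}. First I would note that $(1)\Leftrightarrow(3)$ and $(2)\Leftrightarrow(4)$ are immediate from Corollary~\ref{co:Jordan_annular_all_same}: a function is in $\mathcal{H}(\Gamma,\riem_i)$ precisely when it is the CNT boundary values of an element of $\mathcal{D}_{\mathrm{harm}}(A)$ for some collar neighbourhood $A$ of $\Gamma$ in $\riem_i$ (and then for all such $A$). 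So the real content is the equivalence $(1)\Leftrightarrow(2)$, i.e. $\mathcal{H}(\Gamma,\riem_1)=\mathcal{H}(\Gamma,\riem_2)$. Also, since $\Gamma$ is a quasicircle, null sets with respect to $\riem_1$ and $\riem_2$ coincide by Theorem~\ref{th:quasicircle_null_eitherside}, so "quasieverywhere" is unambiguous and $\mathcal{B}(\Gamma,\riem_1)=\mathcal{B}(\Gamma,\riem_2)=\mathcal{B}(\Gamma)$; hence the two classes $\mathcal{H}(\Gamma,\riem_i)$ at least live in the same ambient space of equivalence classes of functions on $\Gamma$.

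The main step is as follows. Fix a collar neighbourhood $\Omega$ of $\Gamma$ in $\riem_2$ and apply Lemma~\ref{le:quasicircle_straighten} to obtain a compact surface $S$ and a quasiconformal map $f:R\rightarrow S$ which is conformal off the closure of $\Omega$ and takes $\Gamma$ to an analytic Jordan curve $f(\Gamma)=:\Gamma'$. Let $\riem_1'=f(\riem_1)$ and $\riem_2'=f(\riem_2)$ be the two components of $S\backslash\Gamma'$. Since $f$ is conformal on $\riem_1$ (it is conformal off $\mathrm{cl}\,\Omega\supset\riem_1$), the restriction $f|_{\riem_1}:\riem_1\rightarrow\riem_1'$ is a conformal map; by Remark~\ref{re:CNT_definition_confinv}, $h_1\in\mathcal{D}_{\mathrm{harm}}(\riem_1)$ has CNT boundary values quasieverywhere iff $h_1\circ f^{-1}$ does, and the CNT boundary value of $h_1$ at $p$ corresponds to that of $h_1\circ f^{-1}$ at $f(p)$. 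This gives a bijection $\mathcal{H}(\Gamma,\riem_1)\leftrightarrow\mathcal{H}(\Gamma',\riem_1')$ intertwined by $f$. The point of the straightening is that $\Gamma'$ is analytic, so Theorem~\ref{th:trans_exists_analytic} applies on $S$: $\mathcal{H}(\Gamma',\riem_1')=\mathcal{H}(\Gamma',\riem_2')$.

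It remains to transfer the $\riem_2$ side. Here $f$ is \emph{not} conformal on all of $\riem_2$, only on $\riem_2\backslash\mathrm{cl}\,\Omega$; but by Corollary~\ref{co:Jordan_annular_all_same} membership in $\mathcal{H}(\Gamma,\riem_2)$ is detected on an arbitrarily thin collar of $\Gamma$, so I would instead argue as follows. Take any collar neighbourhood $A'$ of $\Gamma'$ in $\riem_2'$ small enough that $f^{-1}(A')$ is contained in $\Omega$; then on such a collar $f$ is merely quasiconformal, so this direct approach needs care. The cleaner route: I would have chosen the collar $\Omega$ in the statement of Lemma~\ref{le:quasicircle_straighten} so that we can also straighten from the $\riem_1$ side by a symmetric construction, or simply observe that by symmetry of the hypotheses we may additionally apply Lemma~\ref{le:quasicircle_straighten} with the roles of $\riem_1$ and $\riem_2$ exchanged to get a second quasiconformal straightening $f':R\rightarrow S'$, conformal on $\riem_2$, with $f'(\Gamma)$ analytic. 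Running the argument of the previous paragraph with $f'$ in place of $f$ gives $\mathcal{H}(\Gamma,\riem_2)=\mathcal{H}(f'(\Gamma),f'(\riem_2))=\mathcal{H}(f'(\Gamma),f'(\riem_1))$ via Theorem~\ref{th:trans_exists_analytic}. Now combining: $h\in\mathcal{H}(\Gamma,\riem_1)$ iff (pushing forward by $f$ and using conformality of $f$ on $\riem_1$, then Theorem~\ref{th:trans_exists_analytic} on $S$, then pulling back by $f$ using its conformality on $\riem_2\backslash\mathrm{cl}\,\Omega$ together with Corollary~\ref{co:Jordan_annular_all_same} to detect membership on a thin collar of $\Gamma$ inside $\riem_2\backslash\mathrm{cl}\,\Omega$) $h\in\mathcal{H}(\Gamma,\riem_2)$, and symmetrically. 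I expect the main obstacle to be exactly this last bookkeeping: making sure that the collar neighbourhood used to detect $\mathcal{H}(\Gamma,\riem_2)$ via Corollary~\ref{co:Jordan_annular_all_same} can be chosen inside the region $\riem_2\backslash\mathrm{cl}\,\Omega$ where $f$ is conformal — which is automatic since Corollary~\ref{co:Jordan_annular_all_same} permits \emph{any} collar — and confirming that the CNT boundary values genuinely match up quasieverywhere under $f$, for which Theorem~\ref{th:quasicircle_null_eitherside} (equality of null sets on the two sides) together with Remark~\ref{re:CNT_definition_confinv} is exactly what is needed.
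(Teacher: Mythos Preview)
Your argument has a genuine gap in the step where you try to transfer back from $\riem_2'$ to $\riem_2$. By construction of Lemma~\ref{le:quasicircle_straighten}, the collar $\Omega\subset\riem_2$ has $\Gamma$ as one of its boundary curves, so $\mathrm{cl}\,\Omega\supset\Gamma$ and hence $\riem_2\backslash\mathrm{cl}\,\Omega$ contains \emph{no} collar neighbourhood of $\Gamma$ at all. Thus there is no collar of $\Gamma$ in $\riem_2$ on which $f$ is conformal, and you cannot invoke Corollary~\ref{co:Jordan_annular_all_same} there. On the collar where you actually need it, $f$ is merely quasiconformal; a quasiconformal pullback of an element of $\mathcal{D}_{\mathrm{harm}}(\riem_2')$ lands in $\dot{H}^1(\riem_2)$ but is not harmonic, so neither Remark~\ref{re:CNT_definition_confinv} nor Corollary~\ref{co:Jordan_annular_all_same} applies. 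Your alternative of introducing a second straightening $f':R\to S'$ conformal on $\riem_2$ does not close the loop either: the two chains live on different target surfaces $S$ and $S'$, and you have given no mechanism to compare $\mathcal{H}(f(\Gamma),f(\riem_2))$ on $S$ with $\mathcal{H}(\Gamma,\riem_2)$ on $R$ or with anything on $S'$.

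The paper avoids this difficulty entirely by not using Lemma~\ref{le:quasicircle_straighten} here. Instead it takes a doubly-connected chart $\psi:A\to\mathbb{A}$ on a neighbourhood $A$ of $\Gamma$; the key point is that $\psi$ is \emph{conformal on both sides} of $\Gamma$ simultaneously. One pushes $H\in\mathcal{D}_{\mathrm{harm}}(\riem_1)$ forward by $\psi$ on the collar $A\cap\riem_1$, lands in the Dirichlet space of a collar of the planar quasicircle $\psi(\Gamma)$, applies the already-proved sphere result (Corollary~\ref{co:local_in_sphere_either_side}, which rests on Theorem~\ref{th:qc_both_sides_same_H}) to transmit across $\psi(\Gamma)$ in $\sphere$, and then pulls back by $\psi$ on $A\cap\riem_2$, where $\psi$ is again conformal, to obtain an element of $\mathcal{D}_{\mathrm{harm}}(A\cap\riem_2)$ with the correct CNT boundary values; Corollary~\ref{co:Jordan_annular_all_same} finishes. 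The straightening lemma is reserved for the boundedness proof (Theorem~\ref{Thm:main boundedness of transmission}), where the quasiconformal distortion on one side is handled by the Dirichlet principle and a Jones--Ward Stolz-angle argument, machinery that is unnecessary for mere existence.
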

 \begin{proof}
  By Theorem \ref{co:Jordan_annular_all_same} (1) is equivalent to (3) and (2) is equivalent to (4).  
  
  Assume that (1) holds.  Let $\psi:A \rightarrow \mathbb{A}$ be a doubly-connected chart in a neighbourhood $A$ of $\Gamma$.  Let $h \in \mathcal{H}(\Gamma,\riem_1)$, so that it has 
  an extension $H \in \mathcal{D}_\mathrm{harm}(\riem_1)$.  Then $H \circ \psi^{-1}$ is in 
  $\mathcal{D}(\psi(A \cap \riem_1))$.  Thus its boundary values $h \circ \psi^{-1}$ 
  are in $\mathcal{H}(\psi(\Gamma))$ by Corollary \ref{co:local_in_sphere}.  Let $\Omega_1$ and $\Omega_2$ be the components of the 
  complement of $\psi(\Gamma)$ in $\sphere$, where $\Omega_1$ contains $\psi(A \cap \riem_1)$.  Since $\psi(\Gamma)$ is a quasicircle, the CNT boundary values of $h \circ \psi^{-1}$ are in both $\mathcal{H}(\Gamma,\Omega_1)$ and $\mathcal{H}(\Gamma,\Omega_2)$ by Theorem \ref{co:local_in_sphere_either_side}.  Thus there is a $G \in \mathcal{D}_\mathrm{harm}(\Omega_2)$ with CNT boundary values equal to $h \circ \psi^{-1}$
  quasieverywhere.  
  Now $G \circ \psi \in \mathcal{D}(A \cap \riem_2)$,  so by Theorem \ref{th:Jordan_annular_enough} it
  has CNT boundary values quasieverywhere in $\mathcal{H}(\Gamma,\riem_2)$; by conformal invariance 
  of CNT boundary values (Remark \ref{re:CNT_definition_confinv}) these equal $h$ quasieverywhere. Similarly (2) implies (1).
 \end{proof}
 
 So for quasicircles, we can now define
 \[  \mathcal{H}(\Gamma) = \mathcal{H}(\Gamma,\riem_1)=\mathcal{H}(\Gamma,\riem_2).   \]

 Furthermore, this shows that the transmission operator $\mathfrak{O}(\riem_1,\riem_2)$ is defined for general quasicircles.  
In order to prove the boundedness of the transmission operator when the boundary curve is a quasicircle, we will need the following lemma:
 \begin{lemma} \label{le:annulus_nontangential agrees with sobolev}  Let $\mathbb{A} = \{ z: r<|z|<1 \}$.  If 
  $h \in H^1(\mathbb{A})$ then the non-tangential limits agree almost everywhere with the boundary trace taken in the sense of Sobolev spaces. 
 \end{lemma}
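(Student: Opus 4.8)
The plan is to prove the statement at the outer boundary circle $\mathbb{S}^1$; the argument at $\{|z|=r\}$ is identical after composing with $z\mapsto r/z$. Write $Th\in H^{1/2}(\partial\mathbb{A})\subset L^2(\partial\mathbb{A})$ for the Sobolev trace of $h$ and let $f$ denote its restriction to $\mathbb{S}^1$. First I would reduce to the harmonic case: solving the Dirichlet problem with data $Th$ produces a harmonic $u\in H^1(\mathbb{A})$ with Sobolev trace $Th$, and I set $g=h-u$, so that $g\in H^1(\mathbb{A})$ has zero Sobolev trace on all of $\partial\mathbb{A}$. Since $u$ is Dirichlet-bounded and harmonic on $\mathbb{A}$, the decomposition of Corollary~\ref{co:harm_in_out_double_decomp} (applied with the sphere as ambient surface) writes $u$, near $\mathbb{S}^1$, as the sum of a function harmonic across $\mathbb{S}^1$ and a Dirichlet-bounded harmonic function on $\mathbb{D}$; the latter is the Poisson integral of its $H^{1/2}(\mathbb{S}^1)$ boundary data, so by Fatou's theorem $u$ has a non-tangential limit at a.e. point of $\mathbb{S}^1$, and this limit equals the Sobolev trace $f$ there. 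Thus the lemma holds for $u$, and it remains to show that $g$ has non-tangential limit $0$ at a.e.\ point of $\mathbb{S}^1$.

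For the zero-trace function $g$, I would first obtain the radial statement: by the absolute continuity of $H^1$ functions on almost every radius (the ACL property) together with the fact that the radial limits compute the Sobolev trace a.e., for a.e.\ $\psi$ the function $t\mapsto g(te^{i\psi})$ is absolutely continuous on $[1/2,1]$ with radial boundary value equal to its trace, namely $0$; hence $g$ has radial limit $0$ at a.e.\ point of $\mathbb{S}^1$. To upgrade this to a genuinely non-tangential statement I would use the Lusin area integral: interchanging the order of integration (and using that $\sigma\{\zeta\in\mathbb{S}^1:z\in W(\zeta,M)\}\lesssim 1-|z|$),
\[
 \int_{\mathbb{S}^1}\Big(\iint_{W(\zeta,M)}|\nabla g(z)|^2\,dA(z)\Big)\,d\sigma(\zeta)\ \le\ C\iint_{\mathbb{A}}|\nabla g(z)|^2\,(1-|z|)\,dA(z)\ \le\ C\,\|g\|_{H^1(\mathbb{A})}^2<\infty,
\]
so $\iint_{W(\zeta,M)}|\nabla g|^2\,dA<\infty$ for a.e.\ $\zeta\in\mathbb{S}^1$. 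At such a $\zeta$ — where in addition the radial limit of $g$ vanishes — a chaining argument along circular arcs inside the wedge (at a fixed modulus) and short radial segments, combined with the fact that the tail $\iint_{W(\zeta,M)\cap\{1-|z|<\delta\}}|\nabla g|^2\,dA\to 0$ as $\delta\to 0$, shows that $g(z)\to 0$ as $z\to\zeta$ within every narrower wedge $W(\zeta,M')$. This is the classical mechanism by which finiteness of the area integral forces non-tangential convergence, run here for a Sobolev rather than a harmonic function via the ACL property. Combining with the result for $u$ gives that the non-tangential limit of $h$ exists a.e.\ on $\mathbb{S}^1$ and equals $(Th)|_{\mathbb{S}^1}$; the symmetric statement at $\{|z|=r\}$ completes the proof.

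The step I expect to be the main obstacle is precisely this passage from radial to non-tangential convergence for the non-harmonic piece $g$: unlike for harmonic functions there is no bound on a non-tangential maximal function of $g$ by $\|g\|_{H^1(\mathbb{A})}$, so one cannot conclude by approximating with smooth functions, and one is forced to exploit the a.e.\ finiteness of the area integral. It is worth remarking that in every place this lemma is applied in the sequel the function $h$ is in fact harmonic on a collar neighbourhood of $\Gamma$; in that situation the radial limit, the non-tangential limit, and the Sobolev trace coincide a.e.\ directly from the Poisson representation (Corollary~\ref{co:annulus_nontangential} together with Fatou's theorem), and the area-integral argument can be dispensed with entirely.
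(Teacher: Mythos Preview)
Your approach is genuinely different from the paper's. The authors do not attempt a direct argument at all: they simply invoke Theorem~8.7(iii) of Brewster--Mitrea--Mitrea--Mitrea \cite{brewster_mitrea}, observing that the annulus is an $(\varepsilon,\delta)$-domain with $1$-Ahlfors-regular boundary and that the numerology $s-(n-d)/p=\tfrac12>0$ is satisfied, so that the Sobolev trace coincides a.e.\ with the non-tangential limit. The paper's proof is thus a one-line citation of a general trace theorem.

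Your hands-on route via the harmonic/zero-trace splitting is more transparent, and the treatment of the harmonic piece $u$ (decomposition from Corollary~\ref{co:harm_in_out_double_decomp}, Poisson representation, Fatou) is correct. The gap is exactly where you place it, and it is genuine rather than cosmetic. From $\iint_{W(\zeta,M)}|\nabla g|^2\,dA<\infty$ one obtains $\int_{1-\delta}^1 I(\rho)\,d\rho<\infty$, where $I(\rho)$ is the integral of $|\nabla g|^2$ over the arc at radius $\rho$; your arc-and-radius chaining gives $|g(\rho e^{i\theta})-g(\rho)|\lesssim (1-\rho)^{1/2}I(\rho)^{1/2}$. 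But integrability of $I$ only forces $\liminf_{\rho\to1}(1-\rho)I(\rho)=0$, not $(1-\rho)I(\rho)\to0$, so the right-hand side need not vanish along \emph{every} approach, and the ``tail $\to0$'' observation does not repair this. In fact the obstruction is structural: one can build $g\in H^1_0(\disk)$ by superposing, at each dyadic scale $1-|z|\sim 2^{-k}$, roughly $2^k$ logarithmic cut-offs (capacitary bumps of height $1$, each with $\dot H^1$-norm $\sim L_k^{-1/2}$, $L_k\to\infty$ fast enough), so that every Stolz region contains a bump centre at every scale while $\sum 2^k/L_k<\infty$. For this $g$ the \emph{pointwise} non-tangential limit fails at every boundary point, yet the radial limit is $0$ a.e. So finiteness of the area integral together with the radial limit cannot, by itself, force pointwise non-tangential convergence for a non-harmonic $H^1$ function.

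Your closing remark is therefore the right resolution, and it deserves to be promoted from a side comment to the main argument. In every use of the lemma in the proof of Theorem~\ref{Thm:main boundedness of transmission}, the function in question is either harmonic on a collar or has its non-tangential limits established beforehand (as a harmonic function composed with a quasiconformal map, via the Stolz-angle preservation of Jones--Ward). In that situation the non-tangential limit exists a.e.\ and equals the radial limit, and your ACL argument already shows that the radial limit equals the Sobolev trace a.e. This weaker statement---\emph{where the non-tangential limit exists, it agrees a.e.\ with the Sobolev trace}---follows cleanly from what you wrote and is all that the paper actually needs.
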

 \begin{proof}
 Since the annulus is an example of an $(\varepsilon,\delta)$ domain in the sense of Definition 1.1 of \cite{brewster_mitrea} (the boundaries are two smooth bounded curves), using Theorem 8.7 (iii) in \cite{brewster_mitrea} with $D=\partial\mathbb{A}=\{|z|=r\}\cup \{|z|=1\}$, which is $1$-Ahlfors regular, and taking $s=1,$ $p=2$ and $n=2$, we have that  their condition $s-\frac{n-d}{p}= 1-\frac{2-1}{2}=\frac{1}{2}\in (0,\infty)$ is satisfied. Thus, Theorem 8.7 (iii) in \cite{brewster_mitrea} yields that the Sobolev trace belonging to $H^{1/2}(D)$ agrees almost everywhere (since the $1$-dimensional Hausdorff measure on $D$ is the 1-dimensional Lebesgue measure) with the non-tangential limit of the function $h\in H^1 (\mathbb{A}).$ 
 \end{proof}

 Now we are ready to state and prove the main result concerning the Dirichlet space boundedness of the transmission operator.
 
 \begin{theorem}\label{Thm:main boundedness of transmission} Let $R$ be a compact Riemann surface, and let $\Gamma$ be a quasicircle 
  which separates $R$ into two connected components $\riem_1$ and $\riem_2$.  Then 
  $\mathfrak{O}(\riem_1,\riem_2)$ and $\mathfrak{O}(\riem_2,\riem_1)$ are bounded with respect to the Dirichlet 
  semi-norm.  
 \end{theorem}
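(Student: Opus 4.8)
The plan is as follows. By the symmetry of the hypotheses it is enough to bound $\mathfrak{O}(\riem_1,\riem_2)$. Fix $h_1\in\mathcal{D}_{\mathrm{harm}}(\riem_1)$; by Theorem \ref{th:transmission_exists_quasicircle} the transmitted function $h_2:=\mathfrak{O}(\riem_1,\riem_2)h_1$ exists, and is the unique element of $\mathcal{D}_{\mathrm{harm}}(\riem_2)$ whose CNT boundary values on $\Gamma$ agree quasi-everywhere with those of $h_1$. The idea is to straighten $\Gamma$ to an analytic curve, invoke the analytic transmission estimate already established, and carry the estimate back across the straightening, which can be taken conformal on $\riem_1$ and quasiconformal only on a thin collar inside $\riem_2$.

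Concretely, choose a collar neighbourhood $\Omega$ of $\Gamma$ in $\riem_2$ and let $f\colon R\to S$ be the quasiconformal map furnished by Lemma \ref{le:quasicircle_straighten}: it is conformal on $R\setminus\mathrm{cl}\,\Omega$ (hence on all of $\riem_1$) and $\Gamma':=f(\Gamma)$ is analytic. Put $\riem_i':=f(\riem_i)$. Since $f$ is conformal on $\riem_1$, $\hat h_1:=h_1\circ f^{-1}\in\mathcal{D}_{\mathrm{harm}}(\riem_1')$ has the same Dirichlet norm as $h_1$, and by Remark \ref{re:CNT_definition_confinv} its CNT boundary values on $\Gamma'$ correspond, through $f|_\Gamma$, to those of $h_1$ on $\Gamma$. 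As $\Gamma'$ is analytic, Theorem \ref{th:trans_exists_analytic} gives $\hat h_2:=\mathfrak{O}(\riem_1',\riem_2')\hat h_1\in\mathcal{D}_{\mathrm{harm}}(\riem_2')$ with the same CNT boundary values on $\Gamma'$, and Theorem \ref{th:analytic_transmission_bounded} yields $\|\hat h_2\|_{\mathcal{D}_{\mathrm{harm}}(\riem_2')}\le C\|h_1\|_{\mathcal{D}_{\mathrm{harm}}(\riem_1)}$.

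Now transfer $\hat h_2$ back by setting $v:=\hat h_2\circ f$ on $\riem_2$. Because $f$ is conformal off $\mathrm{cl}\,\Omega$ and quasiconformal with bounded dilatation on $\Omega$, the Dirichlet integral of $v$ over $\riem_2$ is at most a constant times that of $\hat h_2$; since $\riem_2'$ is bounded by the analytic curve $\Gamma'$, Theorem \ref{equiv and sobolevs in riem} puts $\hat h_2$ in $H^1(\riem_2')$, so $v\in\dot{H}^1(\riem_2)$ with $\|v\|_{\dot{H}^1(\riem_2)}\le C\|h_1\|_{\mathcal{D}_{\mathrm{harm}}(\riem_1)}$, and $v$ is harmonic on $\riem_2\setminus\mathrm{cl}\,\Omega$. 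It remains to recover $h_2$ from $v$ without losing the norm bound. Reading $v$ in a collar chart of $\Gamma$ in $\riem_2$, it becomes an $H^1$ function on a planar annulus, so by Lemma \ref{le:annulus_nontangential agrees with sobolev} its non-tangential (equivalently CNT) boundary values on $\Gamma$ exist up to a null set and coincide there with its Sobolev trace; tracing that trace back through $f$, through the analytic transmission, and through the conformal map on the $\riem_1$-side shows these boundary values agree quasi-everywhere with those of $h_1$. Finally, replace $v$ on a collar of $\Gamma$ in $\riem_2$ by the harmonic function with the same boundary data, which by the Dirichlet principle (Lemma \ref{Dirichlets princip}) does not increase the Dirichlet integral, and extend that collar function to all of $\riem_2$ by the bounded operator $\mathfrak{G}$ of Theorem \ref{th:iota_bounded}; the result lies in $\mathcal{D}_{\mathrm{harm}}(\riem_2)$, has CNT boundary values equal quasi-everywhere to those of $h_1$, and has Dirichlet norm $\le C\|h_1\|$, so by uniqueness it equals $h_2$. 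Exchanging the roles of $\riem_1$ and $\riem_2$ bounds $\mathfrak{O}(\riem_2,\riem_1)$.

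The main obstacle is precisely the penultimate step: a quasiconformal map need not send non-tangential approach regions to non-tangential approach regions, so the boundary values of $v=\hat h_2\circ f$ on the quasicircle $\Gamma$ cannot be identified with those of $h_1$ by conformal invariance alone; Lemma \ref{le:annulus_nontangential agrees with sobolev} is what bridges this, by permitting a passage to the Sobolev trace on the analytic side, which does transform correctly under the collar quasiconformal map and under harmonisation. One can alternatively bypass the straightening: carry a collar of $\Gamma$ into $\sphere$ by a doubly-connected chart, extend by $\mathfrak{G}$ to a full complementary component, apply the planar transmission theorem \ref{th:qc_both_sides_same_H} (bounded exactly because the image of $\Gamma$ is a quasicircle), restrict back to the collar, and extend by $\mathfrak{G}$ on the $\riem_2$-side; each step is bounded and the composite has the correct boundary values, hence equals $\mathfrak{O}(\riem_1,\riem_2)$ by uniqueness.
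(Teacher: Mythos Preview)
Your primary argument follows the paper's strategy---straighten $\Gamma$ via Lemma \ref{le:quasicircle_straighten}, transmit across the resulting analytic curve, pull back---but has a genuine gap at the very point you single out as the main obstacle. You assert that Lemma \ref{le:annulus_nontangential agrees with sobolev} lets the Sobolev trace ``transform correctly under the collar quasiconformal map''; however, that lemma only says that for a \emph{fixed} $H^1$ function on an annulus the non-tangential limits and the Sobolev trace agree a.e.\ --- it says nothing about how either object behaves under composition with a quasiconformal map. To conclude that the trace of $v\circ\phi^{-1}=(\hat h_2\circ\psi^{-1})\circ(\psi\circ f\circ\phi^{-1})$ equals $(\text{trace of }\hat h_2\circ\psi^{-1})\circ(\psi\circ f\circ\phi^{-1})|_{\mathbb{S}^1}$ you need an additional ingredient. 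The paper supplies two things you omit: first, an auxiliary conformal map $g:\riem_2''\to\riem_2$ from an analytically bounded surface, so that Lemma \ref{Dirichlets princip} (which as stated requires analytic boundary) can be applied on $\riem_2''$ rather than on a collar with quasicircle boundary; and second---crucially---a result of Jones and Ward that a quasiconformal self-map of the disk carries Stolz angles into larger Stolz angles. That is precisely what allows the paper to identify the non-tangential limits of $\hat h_2\circ f\circ g\circ\phi^{-1}$ with those of $\hat h_2\circ\psi^{-1}$ composed with the boundary quasisymmetry, and hence with $h\circ g\circ\phi^{-1}$. Without Jones--Ward (or an independent proof that composition with a quasisymmetry commutes with the $H^1\to H^{1/2}$ trace) your ``tracing back'' step is not justified.

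By contrast, the alternative route you sketch in your final sentence is correct and is in fact cleaner than the paper's own proof. Factoring $\mathfrak{O}(\riem_1,\riem_2)$ as
\[
\text{restrict to }A_1\ \longrightarrow\ \psi\ \longrightarrow\ \mathfrak{G}\text{ on }\Omega_1\ \longrightarrow\ r(\Omega_1,\Omega_2)\ \longrightarrow\ \text{restrict to }\psi(A_2)\ \longrightarrow\ \psi^{-1}\ \longrightarrow\ \mathfrak{G}\text{ on }\riem_2
\]
gives a composition of bounded operators (Theorems \ref{th:iota_bounded} and \ref{th:qc_both_sides_same_H}), and at every step the map touching the boundary is \emph{conformal}, so Remark \ref{re:CNT_definition_confinv} and Theorem \ref{th:CNT_any_collar_chart} handle the CNT boundary values without any appeal to the behaviour of quasiconformal maps on Stolz angles. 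This dispenses with both Lemma \ref{le:quasicircle_straighten} and the Jones--Ward result, at the cost of using the planar transmission theorem \ref{th:qc_both_sides_same_H} as a black box.
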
 
 \begin{proof}  The proof proceeds as follows.  We need only show that $\mathfrak{O}(\riem_1,\riem_2)$ is bounded, 
 since we can simply switch the roles of $\riem_1$ and $\riem_2$.  
By Theorem \ref{th:analytic_transmission_bounded} the theorem holds when $\Gamma$ is an analytic curve. We apply Lemma \ref{le:quasicircle_straighten} to map a general quasicircle to an analytic one.  By using the Dirichlet principle, we can show that the original transmission is bounded.  
  Throughout the proof, all of the transmissions take place through quasicircles (at worst), so the transmissions exist by Theorem \ref{th:transmission_exists_quasicircle}  and the boundary values of the transmitted function exist CNT quasieverywhere and agree with the original function CNT quasieverywhere.

Now assume that $\Gamma$ is a quasicircle, and not necessarily an analytic curve. Let $f:R \rightarrow R'$ be the quasiconformal map obtained from Lemma \ref{le:quasicircle_straighten}. Denote $f(\riem_i)=\riem_i'$ for $i=1,2$ and $f(\Gamma)$ by $\Gamma'$.

  Fix $h \in \mathcal{D}_{\text{harm}}(\riem_1)$.  Now $h \circ f^{-1} \in \mathcal{D}_{\text{harm}}(\riem_1')$ and thus
  it has a transmission $\mathfrak{O}(\riem_1',\riem_2') (h \circ f^{-1})$, and furthermore 
  \begin{equation} \label{eq:temp1_reflection}
    \| \mathfrak{O}(\riem_1',\riem_2') (h \circ f^{-1}) \|_{\mathcal{D}(\riem_2')} 
       \leq C \| h \circ f^{-1} \|_{\mathcal{D}(\riem_1')} = C \| h \|_{\mathcal{D}(\riem_1)}      
  \end{equation}
  by Theorem \ref{th:analytic_transmission_bounded}.  
  Since $f$ is a conformal map from $\riem_1$ to $\riem'_1$,  Remark \ref{re:CNT_definition_confinv} 
  applies.
  
 Now let $g:\riem_2'' \rightarrow \riem_2$ be a conformal map where 
  $\riem_2''$ is an analytically bounded Riemann surface.  Observe that $g$ is independent of $f$; we will need this ahead. Such a map exists by Lemma \ref{le:quasicircle_straighten} (or more directly by embedding $\riem_2$ in its double).  The function 
  $[\mathfrak{O}(\riem_1',\riem_2') (h \circ f^{-1})] \circ f \circ g$ is in 
  $\dot{H}^1(\riem_2'')$ by change of variables and the fact that
  $[\mathfrak{O}(\riem_1',\riem_2') (h \circ f^{-1})] \circ f$ is in $\dot{H}^1(\riem_2)$ by quasi-invariance of the Dirichlet norm. By Theorem \ref{equiv and sobolevs in riem},   $[\mathfrak{O}(\riem_1',\riem_2') (h \circ f^{-1})] \circ f \circ g$ is in 
  $H^1(\riem_2'')$. \\
  
  Now let $\tilde{u}$ be the Sobolev trace of $[\mathfrak{O}(\riem_1',\riem_2') (h \circ f^{-1})] \circ f \circ g$
  in $H^{1/2}(\Gamma'')$, which exists because $\riem_2''$ is analytically bounded.  We claim that  
  \begin{equation} \label{eq:step_one_final_proof}
        \tilde{u} =  h \circ g {\text{ a.e.}} 
  \end{equation}
  Roughly, this can be seen by cancelling $f^{-1} \circ f$ in the expression $[\mathfrak{O}(\riem_1',\riem_2') (h \circ f^{-1})] \circ f \circ g$, but the proof requires careful comparison of Sobolev and CNT boundary values of the transmission.  
 
  Let $\phi$ be a collar chart on a doubly connected neighbourhood $V$ of $\Gamma''$ in 
  the double of $\riem_2''$.  Choose the domain of definition $V$ of $\phi$ to be such that it contains an analytic curve $\gamma$ in $\riem_2$ which is such that $f$ is conformal on an open neighbourhood of $g(\gamma)$.  This can be accomplished by adjusting $f$ if necessary, which is possible because $g$ was independent of $f$.     
  
Now let $\psi$ be a collar
  chart on a collar neighbourhood $U$ of $\Gamma'$ in $\riem_2$ which contains $f \circ g(V \cap \riem_2'')$.  In particular, $\psi \circ f \circ g(\gamma)$ is an analytic curve.  It can be arranged that $\psi$ takes $\Gamma'$ to $\mathbb{S}^1$ by composing with a conformal map.  We need to show that the map  $\psi \circ f \circ g \circ \phi^{-1}$ extends to a quasisymmetry of $\mathbb{S}^1$.   To see this, restrict this map to 
 the region $W$ bounded by $\mathbb{S}^1$ and $\phi(\gamma)$.  Since $\phi(\gamma)$ and its image $\alpha$ is an analytic curve 
 (and in particular a quasicircle), and $\psi \circ f\circ g \circ \phi^{-1}$ is a real analytic diffeomorphism from $\phi(\gamma)$ to its image, there is a quasiconformal map $\Phi$ taking the region $D$ bounded by $\phi(\gamma)$ to the region $D'$ bounded by $\alpha$, with boundary values 
 $\psi \circ f \circ g \circ \phi^{-1}$.  The map $\tilde{\Phi}:\disk \rightarrow \disk$ given by 
 sewing these two functions together along $\phi(\gamma)$, that is
 \[   \tilde{\Phi}(z) = \left\{ \begin{array}{cc} \psi \circ f\circ g \circ \phi^{-1}(z) &  z \in W \cup \phi(\gamma)  \\ \Phi(z) & z \in W, \end{array}
 \right.   \]
  is a homeomorphism on $\disk$.  Since it is quasiconformal except on the measure zero set $\phi(\gamma)$, it is quasiconformal.  Thus by the Ahlfors-Beurling extension theorem, $\psi \circ f \circ g \circ \phi^{-1}$ extends to a quasisymmetry of $\mathbb{S}^1$ as claimed.
  
  Now
  $\mathfrak{O}(\riem_1',\riem_2')[h \circ f^{-1}] \circ f \circ g \circ \phi^{-1}$ is in the Dirichlet space of an annulus 
  $\mathbb{A}$.  Thus by Corollary \ref{co:annulus_nontangential} it has non-tangential boundary values
  except possibly on a null set in $\mathbb{S}^1$.  Since $\psi \circ f \circ g \circ \phi^{-1}$ (and its inverse) is a quasiconformal map of $\disk^+$, it and its inverse take Stolz angles inside larger Stolz angles by a result of P. Jones and L. Ward \cite{Jonesward} (see section 3 proof of Theorem 1.1 in that paper). Therefore since the harmonic function
  \begin{equation}  
  [ \mathfrak{O}(\riem_1',\riem_2')[h \circ f^{-1}] \circ f \circ g \circ \phi^{-1}] \circ (\psi \circ f \circ g \circ \phi^{-1} )^{-1} = [ \mathfrak{O}(\riem_1',\riem_2')[h \circ f^{-1}] \circ \psi^{-1} 
 \end{equation}
  has non-tangential limits quasi-everywhere by Lemma \ref{co:annulus_nontangential}, so does $[\mathfrak{O}(\riem_1',\riem_2') (h \circ f^{-1})] \circ f \circ g \circ \phi^{-1}$. Furthermore, the non-tangential limits of $\mathfrak{O}(\riem_1',\riem_2')[h \circ f^{-1}] \circ \psi^{-1}$ equal $h \circ f^{-1} \circ \psi^{-1}$, so since $\psi \circ f \circ g \circ \phi^{-1}$ has a homeomorphic extension to $\mathbb{S}^1$ which is a quasisymmetry, the non-tangential limits of $[\mathfrak{O}(\riem_1',\riem_2') (h \circ f^{-1})] \circ f \circ g \circ \phi^{-1}$ equal $h \circ g \circ \phi^{-1}$. 
  
  Now Lemma \ref{le:annulus_nontangential agrees with sobolev} shows that the non-tangential limits of $[\mathfrak{O}(\riem_1',\riem_2') (h \circ f^{-1})] \circ f \circ g \circ \phi^{-1}$ equal the Sobolev trace.  Thus the Sobolev trace in $H^{1/2}(\mathbb{S}^1)$ of 
  $[\mathfrak{O}(\riem_1',\riem_2') (h \circ f^{-1})] \circ f \circ g \circ \phi^{-1}$is $h \circ g \circ \phi^{-1}$. Thus, since $\phi$ is an analytic map from an open neighbourhood of the analytic curve $\Gamma''$ to an open neighbourhood of $\mathbb{S}^1$, 
  the Sobolev trace $\tilde{u}$ of $[\mathfrak{O}(\riem_1',\riem_2') (h \circ f^{-1})] \circ f \circ g$
  is $h \circ g$.  This proves (\ref{eq:step_one_final_proof}).  \\  
  
 Let $u$ be the harmonic Sobolev extension of $\tilde{u}$.  We claim that 
 \begin{equation} \label{eq:step_two_final_proof}
 u = [\mathfrak{O}(\riem_1,\riem_2) h ] \circ g.
 \end{equation}
 To see this, observe that 
 $[\mathfrak{O}(\riem_1,\riem_2) h ] \circ g \circ \phi^{-1}$ has non-tangential limits on $\mathbb{S}^1$, except possibly on a null set, by Corollary \ref{co:annulus_nontangential}, and these are $h \circ g \circ \phi^{-1}$. Again by Lemma \ref{le:annulus_nontangential agrees with sobolev}, the Sobolev trace of $[\mathfrak{O}(\riem_1,\riem_2) h ] \circ g \circ \phi^{-1}$ thus equals $\tilde{u} \circ \phi^{-1}$.  
 Since $\phi$ is analytic on an open neighbourhood of $\Gamma''$, the Sobolev trace of $[\mathfrak{O}(\riem_1,\riem_2) h ] \circ g$ equals $\tilde{u}$. By uniqueness of the harmonic Sobolev extension on analytically bounded surfaces, $u = [\mathfrak{O}(\riem_1,\riem_2) h] \circ g$ as claimed.\\ 
 \\

 Recalling that $\tilde{u}$ is the Sobolev trace of $[\mathfrak{O}(\riem_1',\riem_2') (h \circ f^{-1})] \circ f \circ g$, by 
 (\ref{eq:step_two_final_proof}) and
 Dirichlet's principle as formulated in Lemma \ref{Dirichlets princip}
 we obtain
 \begin{equation} \label{eq:step_three_final_proof}
 \|  u \|_{\dot{H}^1 (\riem_2'')}  \leq \| [\mathfrak{O}(\riem_1',\riem_2') (h \circ f^{-1})] \circ f \circ g \|_{\dot{H}^1(\riem_2'')}. 
 \end{equation}
 Now Theorem \ref{equiv and sobolevs in riem}, and conformal invariance of Dirichlet energy yield that 
  \begin{align*}
   \| [\mathfrak{O}(\riem_1,\riem_2) h]  \|_{\mathcal{D}_\mathrm{harm}(\riem_2)} 
    & = \| u \|_{\mathcal{D}_\mathrm{harm}(\riem_2'')} \\
      & = \| u \|_{\dot{H}^1 (\riem_2'')} \\
    \ ^{(\ref{eq:step_three_final_proof})} & \leq \| [\mathfrak{O}(\riem_1',\riem_2') (h \circ f^{-1})] \circ f \circ g \|_{\dot{H}^1(\riem_2'')} \\
   & =  \| [\mathfrak{O}(\riem_1',\riem_2') (h \circ f^{-1})] \circ f \|_{\dot{H}^1(\riem_2)}. 
 \end{align*}
 Finally using quasi-invariance of Dirichlet energy and the estimate (\ref{eq:temp1_reflection}), there is a $K$ such that 
 \begin{align*}
   \| [\mathfrak{O}(\riem_1',\riem_2') (h \circ f^{-1})] \circ f \|_{\dot{H}^1(\riem_2)}  & \leq K \| [\mathfrak{O}(\riem_1',\riem_2') (h \circ f^{-1})]   \|_{\mathcal{D}_\mathrm{harm}(\riem_2')} \\
   & \leq K C \| h \|_{\mathcal{D}_\mathrm{harm}(\riem_1)}.   
 \end{align*}
 Combining the two previous estimates completes the proof.
  \end{proof}

\end{subsection}
\end{section}

\end{document}